\let\labelx\label
\def\label#1{\labelx{#1}\marginpar{#1}}
\DeclareMathSymbol\restr\mathbin{AMSa}{"16}  \let\restriction\restr
\DeclareMathSymbol\PP\mathord{AMSb}{`P}
\DeclareMathSymbol\le\mathrel{AMSa}{"36}
\DeclareMathSymbol\ge\mathrel{AMSa}{"3E} 
\DeclareMathSymbol\emptyset\mathord{AMSb}{"3F}
\newcommand\dom{\operatorname{dom}}
\newcommand\val{\operatorname{val}}
\newcommand\sakne{\mathsf{stem}}
\theoremstyle{plain}
 \newtheorem{theorem}{Theorem}
\newtheorem{proposition}[theorem]{Proposition}
\newtheorem{corollary}[theorem]{Corollary}
\newtheorem{lemma}[theorem]{Lemma}
\newtheorem{definition}[theorem]{Definition}
\theoremstyle{remark}
\newtheorem{claim}{Claim}
\begin{document}

\title[An Efimov space with character less than $\mathfrak s$]
{An Efimov space with character less than $\mathfrak s$}

\author{Alan Dow}

\begin{abstract} It is consistent that there is a
compact space of
  character less than the splitting number in which
  there are no converging sequences. Such a space is an Efimov
  space.
\end{abstract}

\date{July 21, 2019}
\subjclass[2010]{54E65, 54D55, 03E75, 54A35}

\keywords{character, Efimov, sequential compactness}

\maketitle

An infinite compact space is an Efimov space if it contains no
converging sequences and no topological copies of $\beta\omega$. It
remains a central open problem to determine if such a space exists in
ZFC. The splitting number $\mathfrak s$, one of the basic 
and well-studied 
cardinal invariants of the continuum, is defined
as the minimum cardinality of a splitting family (defined below).
The splitting number has been shown to equal
 the minimum
weight of a compact space in which there is an infinite
sequence with no converging subsequence.
A related cardinal invariant, $\mathfrak z$, is defined
as the minimum weight of an infinite compact space
containing no converging sequences. This was introduced and studied
 by D. Sobota and also 
studied in \cite{withWill}. Evidently  $\mathfrak s\leq \mathfrak z$
and the weight of
every Efimov space is at least $\mathfrak z$. 
It was shown by
Hausdorff that  $\beta\omega$ has character
 $\mathfrak c$ and so a compact space  of character
 less than $\mathfrak s$ will not contain a copy of $\beta \omega$.
Other connections between the splitting number and Efimov's problem
have been established. It was shown in \cite{SplitCharacter} that
 barring
 large cardinals, if $\mathfrak s < \mathfrak c$, then there
 is an Efimov space and it was observed in \cite{withWill} that
under the same hypotheses an Efimov
space can be constructed to have character equal to $\mathfrak s$.
The space constructed in \cite{ShelahEfimov} was the first to be
constructed in a model of $\mathfrak s =\mathfrak c>\aleph_1$. 
We use the same basic method for the construction of the space.

Another line of inquiry,  concerning pseudointersection numbers of
ultrafilters on $\omega$, is related and 
led us to consider the possibility that
it may be possible for an Efimov space to have character less
than $\mathfrak s$.  
The pseudointersection number of  an ultrafilter
on $\omega$ is the minimum cardinality of a subset
of the filter for which there is no infinite set that is mod finite
contained in every member, i.e. a pseudointersection.
 Every infinite closed subspace of
an Efimov space is again Efimov and so can be assumed
to have a countable dense discrete subset. The trace of every
neighborhood filter on that countable dense is a filter
with no pseudointersection and it follows that every
free ultrafilter on $\omega$ contains such a filter.
Therefore, the example we construct in this paper produces
a model in which the pseudointersection is less
than the splitting number.   This was already established 
in \cite{BrendleShelah}.
 This was explored further in
\cite{ShelahSplitting} to show that the splitting number can be much
larger than the pseudointersection number of every ultrafilter.
A common feature of these papers is that in order to make
the value of $\mathfrak s$ large it is necessary to introduce
a pseudointersection for at least one filter
 while preserving that the many filters witnessing 
 small pseudointersection numbers are preserved to have
 no pseudointersection.

\section{Preliminaries and outline}

In this section we review the techniques and background used
throughout the paper. The set-theoretic aspects include
finite-support iterated forcing of $\sigma$-centered (hence
ccc) posets, the Laver version of a poset for adding a
pseudointersection to a filter, and preservation results about
not adding uncountable branches to trees. The topological
aspects include  Stone duality with Boolean algebras, 
recursive constructions of Ostaszewski style spaces,  the theory
of minimally generated Boolean algebras and 
the generalization to $\mathbb T$-algebras.

We begin with the promised definition of the splitting number.
A family $\mathcal S$ of subset of $\omega$ is a splitting
family if, for every infinite $a\subset \omega$, there is an
$S\in \mathcal S$ such that each of
 $a\cap S$ and $a\setminus S$ are infinite;
  we say that $S$ \textit{splits\/} the set $a$.  
 The splitting
 number $\mathfrak s$ is the minimum cardinality of a splitting
 family.
 The connection to pseudointersections is the following
 easily proven proposition.
 
 \begin{proposition} If $\mathcal B$ is a Boolean subalgebra
 of $\mathcal P(\omega)$ of cardinality less than $\mathfrak s$,
  then there is an ultrafilter of $\mathcal B$ that has an
  infinite pseudointersection.
  \end{proposition}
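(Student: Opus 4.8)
The plan is to apply the definition of $\mathfrak s$ in contrapositive form. Since $\mathcal B$, viewed as a family of subsets of $\omega$, has cardinality strictly below $\mathfrak s$, it cannot be a splitting family. Hence there is an infinite set $a\subseteq\omega$ that is split by no member of $\mathcal B$. I would fix such an $a$ at the outset; it will serve, in the end, as the required pseudointersection, so the entire argument reduces to manufacturing an ultrafilter for which $a$ works.

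The key step is to extract that ultrafilter from the single unsplit set. For each $B\in\mathcal B$, the assertion that $a$ is not split by $B$ says precisely that one of $a\cap B$ and $a\setminus B$ is finite, i.e.\ that either $a\subseteq^* B$ or $a\subseteq^*(\omega\setminus B)$. Because $a$ is infinite, these two alternatives cannot hold simultaneously, since together they would force $a\subseteq^*\emptyset$. I would then set
\[
  \mathcal U=\{B\in\mathcal B : a\subseteq^* B\}.
\]

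It remains to verify that $\mathcal U$ is an ultrafilter of $\mathcal B$ admitting $a$ as an infinite pseudointersection, and this is routine. Upward closure within $\mathcal B$ and closure under finite intersection are immediate from the corresponding monotonicity of $\subseteq^*$; properness holds because $a\not\subseteq^*\emptyset$; and the dichotomy of the preceding paragraph shows that for every $B\in\mathcal B$ exactly one of $B,\ \omega\setminus B$ belongs to $\mathcal U$, which is the ultrafilter condition. Finally, $a$ is infinite and, by the very definition of $\mathcal U$, satisfies $a\subseteq^* B$ for every $B\in\mathcal U$, so it is the desired pseudointersection. I do not expect any substantial obstacle here; the one point deserving attention is the mutual-exclusivity observation, since that is exactly what upgrades the candidate filter to a genuine ultrafilter rather than merely a filter.
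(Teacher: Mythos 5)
Your argument is correct and is precisely the standard one the paper alludes to when it calls the proposition ``easily proven'' (the paper omits the proof entirely): an unsplit infinite set $a$ yields the ultrafilter $\{B\in\mathcal B: a\subseteq^* B\}$, and $a$ itself is the pseudointersection. All the verifications you list, including the mutual-exclusivity point that makes the filter ultra, go through without issue.
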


Readers familiar with Mathias forcing will know that with
 $\mathcal B = \mathcal P(\omega)$, the forcing
 will add a new free ultrafilter on $\omega$
 (i.e. $\mathcal B$) and
 a pseudointersection for it, while preserving that none
 of the ground model ultrafilters have a pseudointersection.
 We will be using the following variant in the style
 of
 Laver forcing  (\cite{Laver}) that
was introduced in 
 \cite{Blass1,JudahShelah}. This variant is $\sigma$-centered meaning
 that the poset can be written as a countable union of centered
 subsets.
 
 The poset is a set of subtrees of $\omega^{<\omega}$. A subtree
 $T_1$
 of a tree $T$ means that all predecessors in $T$
  of every element of $T_1$ is also in $T_1$ and that $T_1$ is
  ordered with the inherited order. 
  We refer to elements $t\in \omega^{<\omega}$ as
nodes and use the notation $t^\frown j$ for the node $t\cup \{ (\dom(t),j)\}$.

\begin{definition} For a filter $\mathcal D$ on $\omega$, the poset
  $\mathbb L(\mathcal D)$ denotes the set of sub-trees
   $T$ of $\omega^{<\omega}$ satisfying the following properties:
   \begin{enumerate}
   \item for each $t\in T$ and
    $s\subset t$ in $\omega^{<\omega}$, $s\in T$,
   \item a node $t\in T$ is said to be a branching node of $T$ if the
    set $\{ j\in \omega : t^\frown j\}$ has more than one element,
    \item the minimum branching node of $T$ is called the stem of $T$
    and is denoted with $\sakne(T)$,
    \item $T$ is 
  everywhere $\mathcal D$-branching above the stem in the sense
  that for all $\sakne(T)\subseteq t$, $\{ j \in\omega : t^\frown j\in T\}\in \mathcal D$.
  \end{enumerate}
  $\mathbb L(\mathcal D)$ is ordered by inclusion. We use the
  standard notation
  $T_1<_0 T_0$ to indicate that $T_1\subset T_0$ and  
  $\sakne(T_1)=\sakne(T_0)$.
  \end{definition}

 It is well-known
 that if $\mathcal D$ is a free filter
 then $\mathbb L(\mathcal D)$  adds a dominating real
 and if $\mathcal D$ is a free ultrafilter
 then $\mathbb L(\mathcal D)$ adds a subset of $\omega$
 that is not split by any set from the ground model
 (see \cite{Blass1}*{Theorem 8}). 
 We will assume without mention that ultrafilter on $\omega$
 refers to a free ultrafilter. 
\medskip

  For any  ordinal $\mu$, we use the following conventions:
a finite support iteration of $\sigma$-centered posets,
 $\langle  P_\alpha  ,\dot Q_\alpha :
  \alpha <\mu\rangle$ with limit $P_\mu$, 
means that  for all $\alpha\leq \mu$
\begin{enumerate}
\item $P_0$ is the trivial poset $\{\emptyset\}$;
\item $P_\alpha$ is the limit of the sequence 
$\langle  P_\beta,  \dot Q_\beta :
  \beta<\alpha  \rangle$;
  \item for $\alpha<\mu$, $\dot Q_\alpha$ is a $P_\alpha$-name of a $\sigma$-centered poset;
  \item for each $p\in P_\alpha$, $p$ is a function with finite domain, $\dom(p)$, contained
  in $\alpha$, and $p(\beta)\in \dot Q_\alpha$ for each $\beta\in \dom(p)$;
  \item for $p,q\in P_\alpha$, $p<q$ providing $\dom(p)\supset \dom(q)$, and
  for all $\beta\in\dom(q)$, $p\restriction\beta\Vdash p(\beta)\leq_{\dot Q_\beta}q(\beta)$.
\end{enumerate}
 
Another tree that will feature prominently in our construction is
the full binary tree of height $\omega_1$: $2^{<\omega_1}$. 
Utilizing
the theory of $\mathbb T$-algebras, our main construction will
be of a Boolean algebra with a generating family indexed by
the nodes of the tree $2^{<\omega_1}$ in such a way 
that the ultrafilters canonically correspond to maximal
(hence uncountable)  branches of $2^{<\omega_1}$.
 More importantly, the family
of generators indexed by the nodes of the corresponding branch 
will  generate a base for the ultrafilter. This next result is
critical for our goal of ensuring that no such filters acquire a pseudointersection.
It will be our convention in this paper to let $\Gamma$ refer to subtrees
of $2^{<\omega_1}$. In this next proposition, $\Gamma$ refers
to the members of $2^{<\omega_1}$ that are in the ground model.

\begin{proposition} \cite{KunenTall}
Let $\Gamma =2^{<\omega_1}$. 
In a forcing extension by a finite support iteration of
$\sigma$-centered posets, all
uncountable branches of $\Gamma$ belong to the
ground model.
\end{proposition}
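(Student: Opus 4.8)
The plan is to argue by induction on the length $\mu$ of the iteration $\langle P_\alpha,\dot Q_\alpha:\alpha<\mu\rangle$ that $P_\mu$ adds no new uncountable branch to $\Gamma=\ourtree$. I first record the structural feature used throughout: since every node of $\Gamma$ lies in the ground model, an uncountable (equivalently cofinal) branch $b$ is a function $b\colon\omega_1\to 2$ all of whose proper initial segments $b\restriction\alpha$ ($\alpha<\omega_1$) already belong to $V$; what must be shown is that $b$ itself belongs to $V$. Also, a finite support iteration of $\sigma$-centered posets is ccc (indeed it has precaliber $\aleph_1$), so $\omega_1$ is preserved and $\Gamma$ keeps height $\omega_1$ in every extension considered.

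The heart of the argument is the single-poset case, which I isolate as follows: if $R=\bigcup_{n\in\omega}R_n$ is $\sigma$-centered with each $R_n$ centered, and $\dot b$ names a cofinal branch of a ground model tree $T$ of height $\omega_1$, then $\Vdash\dot b\in V$. For each $n$ set $b_n=\bigcup\{s\in T:\exists p\in R_n,\ p\Vdash\dot b\restriction\dom(s)=s\}$. Since any two conditions of the centered set $R_n$ have a common extension forcing $\dot b$ through both candidate nodes, and $\dot b$ is a chain, these nodes are pairwise comparable; hence $b_n$ is a chain, i.e. a (possibly partial) branch, and it is computed entirely inside $V$, so $\langle b_n:n\in\omega\rangle\in V$. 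For the generic branch $b$ and each $\alpha<\omega_1$, some condition of the generic filter decides $\dot b\restriction\alpha$ and lies in some $R_n$, whence $b\restriction\alpha=b_n\restriction\alpha$ for that $n$. Setting $f(\alpha)=\min\{n:b_n\restriction(\alpha+1)=b\restriction(\alpha+1)\}$, the agreement sets shrink as $\alpha$ grows, so $f\colon\omega_1\to\omega$ is nondecreasing and hence eventually constant with value $n^*$; this forces $b_{n^*}=b$, so $b\in V$. Nothing beyond ``$T$ is a ground model tree of height $\omega_1$'' is used, so this applies verbatim to $T=(\ourtree)^W$ in any intermediate model $W$.

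With this in hand the successor and small-cofinality limits are routine. At a successor $\mu=\nu+1$ I apply the induction hypothesis to $P_\nu$ and then the single-poset case to $\dot Q_\nu$ over $W=V^{P_\nu}$: a branch of $\Gamma$ added by $P_\mu$ is a branch of $(\ourtree)^W$, hence lies in $W$ by the single-poset case, and being a branch of $\Gamma$ in $W$ it lies in $V$ by induction. At a limit $\mu$ of cofinality $\omega$, the finite support limit $P_\mu=\bigcup_n P_{\beta_n}$ is an increasing union of $\sigma$-centered posets, hence itself $\sigma$-centered (a common lower bound computed in some $P_{\beta_n}$ remains a lower bound in $P_\mu$), so the single-poset case applies directly. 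At a limit $\mu$ with $\operatorname{cf}(\mu)>\omega_1$, any branch name is decided level by level by countable antichains of finitely supported conditions; the union of the supports has size at most $\aleph_1<\operatorname{cf}(\mu)$ and is thus bounded below $\mu$, so the branch is added by some $P_\beta$ with $\beta<\mu$ and induction applies.

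The main obstacle is the remaining case, a limit $\mu$ of cofinality $\omega_1$ with $|\mu|$ above the ground model continuum, where neither reflection nor $\sigma$-centeredness of $P_\mu$ is available and the generic branch may genuinely be spread along a cofinal $\omega_1$-sequence of stages. This is exactly the configuration in which a ground model Souslin-type ``tree of possible values'' of $\dot b$ could a priori be branched by a ccc forcing, so the finite support structure must be exploited more carefully. Here I would use that precaliber $\aleph_1$ is preserved by finite support iterations of $\sigma$-centered posets, so that $P_\mu$ has precaliber $\aleph_1$, together with a $\Delta$-system analysis: choosing for each $\alpha<\omega_1$ a condition $p_\alpha$ deciding $\dot b\restriction\alpha$, one thins the finite supports $\supp(p_\alpha)$ to a $\Delta$-system with root $\rho$ and uses that the finite subiteration on $\rho$ is $\sigma$-centered to pass to an uncountable pairwise compatible subfamily, forcing the decided initial segments to cohere. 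Converting this coherence into membership of the \emph{generic} branch in $V$, rather than merely the existence of some ground model branch, is the delicate step; it is precisely the content of the cited result of Kunen and Tall, which I would invoke at this point.
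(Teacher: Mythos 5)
The paper offers no proof of this Proposition --- it is simply quoted from Kunen--Tall --- so your attempt can only be measured against the standard argument. Your single-poset case is correct and is exactly what the paper later attributes to \cite{KunenTall}*{Theorem 9}: the sets $b_n$ of nodes decided by conditions in the $n$-th centered piece are chains computable in $V$, and a nondecreasing function from $\omega_1$ to $\omega$ is eventually constant, so the generic branch equals some $b_{n^*}\in V$. The successor step and the reduction at limits of cofinality greater than $\omega_1$ (a nice name for an $\omega_1$-sequence has support of size at most $\aleph_1$, hence is a $P_\beta$-name for some $\beta<\mu$) are also fine.

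The gap lies in the two remaining limit cases, and it is genuine. At limits of cofinality $\omega$ you assert that each $P_{\beta_n}$ is $\sigma$-centered, but your induction hypothesis is only ``adds no new uncountable branches,'' and proper initial segments of a long finite support iteration of $\sigma$-centered posets need not be $\sigma$-centered: already $\Fn(\kappa,2)$ for $\kappa>\mathfrak{c}$ is a finite support product of $\sigma$-centered posets that is not $\sigma$-centered, the general guarantee holding only for lengths at most $\mathfrak{c}$. At limits of cofinality $\omega_1$ you stop and ``invoke the cited result of Kunen and Tall,'' which is circular, since that citation \emph{is} the statement being proved; and the repair you sketch --- precaliber $\aleph_1$ plus a $\Delta$-system --- cannot close this case for $\Gamma=2^{<\omega_1}$. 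Thinning $\{p_\alpha\}$ to a centered family only produces a ground model branch $c$ such that for unboundedly many $\alpha$ \emph{some} condition forces $\dot b\restriction\alpha=c\restriction\alpha$; no single condition need force cofinal agreement, so this does not place the generic branch in $V$, and, unlike the Aronszajn or Souslin setting, no contradiction can be extracted from merely exhibiting ground model branches, since $2^{<\omega_1}$ has $2^{\aleph_1}$ of them. Some form of the ``countably many candidate branches'' mechanism from your single-poset case must be made to survive at cofinality-$\omega_1$ limits --- for instance by proving that all initial segments of length at most $\mathfrak{c}$ are genuinely $\sigma$-centered, which is what is actually needed for the length-$\omega_2$ iteration over a model of CH used in this paper --- and that is precisely the step that is missing.
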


\medskip

Now we turn to the topology background. 
An Ostaszewski style space will simply mean a locally countable and 
locally compact topology on $\omega_1$ in which the set $\omega$
is open and dense. In fact, we also intend that every initial segment
 $\alpha$ is an open subset. Such spaces are scattered and the topology can
 be specified by presenting a sequence $\langle U_\alpha : \alpha \in \omega_1\rangle$ where each $U_\alpha$ is a 
subset of the initial segment 
$\alpha{+}1$ and is a compact clopen neighborhood of 
the point $\alpha$.  Furthermore, since $\omega$ is dense, 
 we can set $a_\alpha = U_\alpha \cap \omega$ and consider
 the Stone space of the Boolean subalgebra of $\mathcal P(\omega)$
 generated by the family $\{ a_\alpha : \alpha \in \omega_1\}$. 
 We identify the point $\alpha$ with the ultrafilter generated by
 the family $\{ a_\alpha \}\cup \{\omega\setminus a_\beta : \beta < \alpha\}$,
and we have constructed the same space. The one-point compactification of
$\omega_1$ with this topology is the Stone space of this same Boolean
algebra. 

We also wish to connect this presentation of Ostaszewski style spaces
to Koppelberg's work on minimally generated Boolean algebras
(\cite{Koppelberg2}) where (minimal) generating sequences for
Boolean
algebras are introduced and discussed. 
An important connection to Efimov's problem proven
in \cite{Koppelberg2} is that the Stone space of a 
minimally generated Boolean algebra will not contain 
a copy of $\beta\mathbb N$ (in fact she proved that the Boolean
algebra does not contain an uncountable free algebra).
 
As in \cite{Cclosed}, this 
motivates the terminology in this next definition and the adjective
\textit{coherent} refers to the optional coherence in the generation of
the family of 
ultrafilters.  
The family of sets $\{ a_\alpha : \alpha\in\omega_1\}$ discussed in
the previous paragraph.
By abstracting the properties it makes it easier
 to verify the properties of our construction.

\begin{definition} A sequence $\langle a_\alpha
  :\alpha\in\lambda \rangle$ of subsets of $\omega$
 is  coherent if for each $\alpha\leq\beta<\lambda$ there is a finite 
 $F\subseteq
 \alpha$ such that either $a_\alpha\cap a_\beta\subseteq   
\bigcup_{\gamma\in
   F}a_\gamma = a_F$ or
 $a_\alpha \subseteq a_\beta\cup a_F$.  The sequence is
 proper if for all $\beta$, $a_\beta$ is not in the ideal generated
 by $\{a_\xi : \xi < \beta\}$. 
 \end{definition}
 
 In our motivating example, $a_\alpha$ is defined to be $U_\alpha\cap \omega$,
 and now we work in reverse and use $\widehat a_{\alpha}$ as notation
 for $U_\alpha$.
 
 \begin{definition}
 If $\langle a_\alpha : \alpha\in \lambda\rangle$ is a proper
 coherent sequence of subsets of $\omega$ and if $\beta < \lambda$,
 $\widehat a_{\beta}$ is defined
 to be the set of $\alpha\leq \beta $ such that $a_\alpha\setminus a_\beta$
is contained in $a_F$ for some finite $F\subseteq\alpha$.
 Observe that ${\widehat
   a}_\beta\subseteq \beta+1$. For each non-empty finite $F\subset\omega_1$,
 let $\widehat{a}_F = \bigcup_{\alpha\in F} \widehat{a}_\alpha$.  
\end{definition}

Note that every initial segment of a coherent sequence  is
also a coherent sequence, and that 
 the value of $\widehat{a}_\alpha$ depends only on the initial
 sequence up to $\alpha+1$.

\begin{proposition}[\cite{Koppelberg2}] If $\mathcal A = 
\langle a_\alpha : \alpha \in \lambda\rangle$ is a\label{two}
proper  coherent sequence, then the family $\{ {\widehat a}_\beta :
\beta \in \lambda\}$ generates a  compact scattered Hausdorff topology,
 $\tau(\mathcal A)$,
 on
$\lambda+1$ in which each ${\widehat a}_\beta$ is compact and open.
\end{proposition}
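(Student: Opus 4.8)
The plan is to show that the family $\{\widehat a_\beta : \beta \in \lambda\}$ forms a base of compact-open sets for a topology on $\lambda+1$ satisfying the required properties. The crucial structural fact I would establish first is that the coherence hypothesis transfers upward to the $\widehat a_\beta$'s: namely, for $\alpha \leq \beta$ the set $\widehat a_\alpha \cap (\alpha+1)$ is, modulo finitely many $\widehat a_\gamma$'s with $\gamma < \alpha$, either contained in or disjoint from $\widehat a_\beta$. This is where the two clauses of the coherence definition — $a_\alpha \cap a_\beta \subseteq a_F$ versus $a_\alpha \subseteq a_\beta \cup a_F$ — get used: the first clause should force $\widehat a_\alpha \setminus \widehat a_F$ to miss $\widehat a_\beta$, and the second should force $\widehat a_\alpha \setminus \widehat a_F$ into $\widehat a_\beta$. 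Unwinding the definition of $\widehat a_\beta$ (the set of $\alpha' \leq \beta$ with $a_{\alpha'} \setminus a_\beta \subseteq a_{F'}$ for some finite $F' \subseteq \alpha'$) in light of these containments is the core combinatorial lemma.

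Granting that, I would let the topology $\tau(\mathcal A)$ be generated by taking the family $\{\widehat a_\beta : \beta \in \lambda\}$ together with their complements in $\lambda+1$ as a subbase. The coherence lemma above then says that any finite Boolean combination of the $\widehat a_\beta$'s, intersected with an initial segment, is again a finite union of such sets up to a bounded error, so the family is in fact closed enough to form a clopen base. I would verify Hausdorffness by induction: given $\alpha < \beta \leq \lambda$, properness guarantees $a_\beta$ is not covered by finitely many earlier $a_\xi$, which lets me separate the points $\alpha$ and $\beta$ using an appropriate $\widehat a_\gamma$ and its complement. The point $\lambda$ at the top, corresponding to the one-point compactification, is separated from everything below by complements of the $\widehat a_\beta$.

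For compactness, the natural route is to argue by induction on $\lambda$ that every $\widehat a_\beta$ is compact, simultaneously with scatteredness. Since $\widehat a_\beta \subseteq \beta+1$ and the generators are clopen, I would show that a cover of $\widehat a_\beta$ by basic open sets reduces to a cover of a set of the form $\widehat a_F$ for finite $F \subseteq \beta$ plus the point $\beta$ itself — the point $\beta$ is isolated in $\widehat a_\beta$ relative to the subbasic sets that omit it, reflecting that $\beta$ is the ``top'' of its own neighborhood. Scatteredness follows because each point $\beta$ is isolated in the closed set $\{\gamma : \gamma \leq \beta\}$ it tops off, so every nonempty subset has an isolated point; this is exactly the Ostaszewski/minimally-generated structure. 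The whole space $\lambda+1$ is compact because it equals the complement of nothing — more precisely, it is $\widehat a_\lambda$ when $\lambda \in \lambda$, or one adjoins $\lambda$ as the compactifying point, and compactness propagates up the recursion.

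The main obstacle I anticipate is the bookkeeping in the coherence lemma of the first paragraph: the definition of $\widehat a_\beta$ quantifies over finite $F$ depending on each $\alpha'$, so propagating a single finite error set $F$ from the coherence of $\langle a_\alpha\rangle$ to a uniform statement about $\widehat a_\beta$ requires carefully controlling how these finite sets accumulate, and one must check that the accumulated error stays a genuine finite union $\widehat a_F$ rather than growing unboundedly. Once the clopen base is established with this controlled error, compactness and Hausdorffness are comparatively routine inductions; the real content, and the place where properness and coherence do their essential work, is in verifying that the $\widehat a_\beta$ really do behave like a tree of compact-open sets under finite Boolean operations.
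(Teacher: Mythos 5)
Your plan is correct and follows essentially the same route as the paper: the same subbase consisting of the sets $\widehat{a}_\beta$ and their complements in $\lambda+1$, Hausdorffness and scatteredness read off from $\widehat{a}_\beta\subseteq\beta+1$, and compactness from a well-foundedness argument whose crux is exactly the coherence-transfer fact you isolate (if $\gamma\in\widehat{a}_\beta$ then $\widehat{a}_\gamma\subseteq\widehat{a}_\beta\cup\widehat{a}_F$ for some finite $F\subseteq\gamma$). The only differences are organizational: the paper runs compactness through the Alexander Subbase Theorem and the minimal element of the set of $\gamma$ with $[\gamma,\lambda]$ finitely covered rather than your induction on $\beta$, and it obtains Hausdorffness directly from $\alpha\in\widehat{a}_\alpha\subseteq\alpha+1$ --- properness is what makes your transfer lemma go through, not the separation of points.
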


\begin{proof}  The family $\mathcal S = 
\{ \widehat{a}_\beta : \beta <\lambda\}
  \cup \{\lambda+1\setminus \widehat{a}_\beta : \beta < \lambda\}$
  forms
  a subbase for the topology. The topology is Hausdorf since
  $\alpha\in \widehat{a}_\alpha$ and $\beta \in \lambda+1\setminus
  \widehat{a}_\alpha$ for all $\alpha < \beta\leq \lambda$. 
  Similarly, the space is scattered since $\min(Y)$ is isolated in $Y$
  for all $Y\subset \lambda{+}1$.
We use the Alexander Subbase Theorem to prove that the space is
compact. Let $\mathcal B\subset \mathcal S$ be a cover of
$\lambda+1$. Let $H$ be the set of $\gamma\leq\lambda$
such that the interval $[\gamma,\lambda]$ is covered by a finite
subset of $\mathcal B$. Since $\lambda\in H$,
$H$ is not empty and we  let $\gamma_0$ be the minimum element
of $H$. Assume that $\gamma_0>0$. By the minimality of $\gamma_0$
it follows that $\lambda+1\setminus \widehat{a}_\beta\notin \mathcal
B$ for all $\beta <\gamma_0$ and so
there must be a $\beta<\lambda$ such that
$\gamma_0\in \widehat{a}_{\beta} \in \mathcal B$. Since the sequence
is coherent, there is a finite $F\subset \gamma_0$ satisfying
that $\widehat{a}_{\gamma_0} \subset \widehat{a}_\beta\cup
\widehat{a}_F$. But now it follows easily
that $\max{F}\in H$, contradicting the minimality of $\gamma_0$.
\end{proof}

Finally, we introduce the remarkable generalization
of linear coherence to  a coherence structure based on
 binary trees. These are the $\mathbb T$-algebras
 introduced
in \cite{Kosz1}. However for this paper we restrict to subalgebras of
 $\mathcal P(\omega)$ and subtrees of $2^{<\omega_1}$.
 Also, our notation will
more closely follow those of
 \cite{Roberto,ShelahEfimov} but especially \cite{Cclosed}.  
For a node $\sigma\in 2^{<\lambda}$ for any ordinal $\lambda$,
 define $\sigma^\dagger$ to be $\sigma$ if $\dom(\sigma)$ is not a
 successor ordinal, and otherwise $\mathop{dom}(\sigma^\dagger)
  = \dom(\sigma)$ and $\sigma^\dagger$ agrees with $\sigma$
  except at the maximum value in $\mathop{dom}(\sigma)$. For
  each $x\in 2^{\leq\omega_1}$, let $\lambda_x$ denote the domain of
  $x$.

\begin{definition} A $\mathbb T$-algebra is a family\label{fifteen}
  $\mathcal A_{\Gamma} =
\langle a_\sigma : \sigma \in \Gamma \rangle$ such that,
   for some ordinal $\lambda$
  
  \begin{enumerate}
  \item $\Gamma \subset 2^{<\lambda}$ is a subtree of $2^{<\lambda}$ that is 
  downward closed (i.e. $\tau\in \Gamma$ for every $\sigma\in \Gamma$
  and $\tau\subset \sigma$ in  $2^{<\lambda}$),
  \item if $\sigma\in \Gamma$, then $\sigma^\dagger\in \Gamma$
     (say that $\Gamma$ is twinned),
  \item $a_\sigma$ is empty for each $\sigma\in \Gamma$
    that is not on a successor level,
  \item for $\sigma$ on a successor level of $\Gamma$,
    $a_{\sigma^\dagger} = \omega\setminus a_\sigma$,
  \item for each  $x\in 2^{\leq\lambda}$,
    the family $\mathcal A_{\Gamma,x} = 
    \{ a^x_\alpha = a_{x\restriction\alpha{+}1} :
       x\restriction \alpha{+}1 \in \Gamma\}$ is a proper coherent
       sequence.  
    \end{enumerate}
\end{definition}

Henceforth $\Gamma$ will always denote a subtree of
 $2^{<\omega_1}$ that is downward closed, 
 has no maximal elements, and is 
closed under the $\dagger$ operation. For each such $\Gamma$, 
 $X(\Gamma)$ will denote the set of minimal elements
 of $2^{\leq\omega_1}\setminus \Gamma$ (i.e. the maximal branches of
 $\Gamma$). We will let $X(\Gamma,\aleph_0)$ denote
 $\{ x\in X(\Gamma) : \lambda_x<\omega_1\}$ (the countable elements of
 $X(\Gamma)$).

\begin{lemma}\cite{Kosz1} Let $\mathcal A_\Gamma = \langle a_\sigma : \sigma\in\Gamma\rangle$
be a $\mathbb T$-algebra. Let $x\in X(\Gamma)$ and let $\mathcal U$ be any 
ultrafilter on the Boolean subalgebra, $\mathcal B_\Gamma$, of $\mathcal P(\omega)$ generated by
 $\mathcal A_\Gamma$.
 \begin{enumerate}
 \item The family of finite intersections from the set
  $$\{ \omega\setminus a_{x\restriction \alpha} : \alpha\in\dom(x)\}
  \ \ \ \mbox{i.e.}\ \{ \omega\setminus a^x_\alpha : a^x_\alpha\in \mathcal A_{\Gamma,x}\}$$
 is a filter base for an ultrafilter $\mathcal U_x$ on $ \mathcal B_{\Gamma}$.
 \item  There is a $y\in X(\Gamma)$ such that $\mathcal U = \mathcal U_y$.
 \end{enumerate}
 \end{lemma}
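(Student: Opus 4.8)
The plan is to prove part~(1), which carries the real content, and then obtain part~(2) by a recursive construction that reads off $y$ from $\mathcal U$.

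First I would observe that $\lambda_x$ is a limit ordinal for every $x\in X(\Gamma)$. If instead $\lambda_x=\mu+1$, then $x{\restriction}\mu\in\Gamma$ but $x=x{\restriction}(\mu+1)\notin\Gamma$; the sibling of $x$ over $x{\restriction}\mu$ is precisely $x^\dagger$, so were it in $\Gamma$ then twinning (condition~(2)) would force $x=(x^\dagger)^\dagger\in\Gamma$, a contradiction. Thus both successors of $x{\restriction}\mu$ lie outside $\Gamma$, making $x{\restriction}\mu$ maximal in $\Gamma$ and contradicting that $\Gamma$ is pruned. Hence the index set of the proper coherent sequence $\mathcal A_{\Gamma,x}$ has no greatest element, and the finite intersection property follows at once: a finite intersection of the proposed base equals $\omega\setminus\bigcup_{\alpha\in E}a^x_\alpha$, and choosing an index $\beta>\max E$, properness gives $a^x_\beta\not\subseteq\bigcup_{\alpha\in E}a^x_\alpha$, so the intersection is nonempty and the base generates a proper filter $\mathcal U_x$.

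The heart of part~(1) is that $\mathcal U_x$ is an ultrafilter, and for this it suffices to decide each generator, since $\mathcal B_\Gamma$ is generated by the $a_\sigma$ and a proper filter containing one of $a_\sigma,\omega\setminus a_\sigma$ for every $\sigma$ is an ultrafilter. By condition~(4) each $\omega\setminus a^x_\alpha$ is itself the generator of the node branching off $x$ at level $\alpha$, so the base consists of generators. Fix $\sigma\in\Gamma$: if $\dom(\sigma)$ is not a successor then $a_\sigma=\emptyset$; if $\sigma$ is an initial segment of $x$ then $\omega\setminus a_\sigma$ is a base element; otherwise let $\gamma$ be the least level of disagreement between $\sigma$ and $x$, and let $\tau$ be the length-$(\gamma{+}1)$ initial segment of $\sigma$, so $a_\tau=\omega\setminus a^x_\gamma$ is in the base and $\dom(\sigma)=\eta+1$ is a successor. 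I would then pick a maximal branch $y\in X(\Gamma)$ through $\sigma$ and apply coherence of $\mathcal A_{\Gamma,y}$ to the indices $\gamma\le\eta$, where $a_\sigma=a^y_\eta$: there is a finite $F\subseteq\gamma$ with either $a^y_\gamma\cap a^y_\eta\subseteq\bigcup_{\delta\in F}a^y_\delta$ or $a^y_\gamma\subseteq a^y_\eta\cup\bigcup_{\delta\in F}a^y_\delta$. The decisive point is that $F\subseteq\gamma$ and $y{\restriction}\gamma=x{\restriction}\gamma$, so $\bigcup_{\delta\in F}a^y_\delta=\bigcup_{\delta\in F}a^x_\delta$ has complement in $\mathcal U_x$; combined with $a^y_\gamma=\omega\setminus a^x_\gamma\in\mathcal U_x$, the second alternative puts $a_\sigma\in\mathcal U_x$ and the first puts $\omega\setminus a_\sigma\in\mathcal U_x$. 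Every generator is thereby decided, so $\mathcal U_x$ is an ultrafilter.

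For part~(2) I would build $y$ by recursion following $\mathcal U$. Assuming $y{\restriction}\gamma\in\Gamma$, pruning gives one successor in $\Gamma$ and twinning then gives the other; their generators are complementary by condition~(4), so exactly one lies in $\mathcal U$, and I set $y(\gamma)$ to pick the successor whose generator is \emph{not} in $\mathcal U$, securing $\omega\setminus a_{y{\restriction}(\gamma{+}1)}\in\mathcal U$. At limit stages take unions and continue while the union stays in $\Gamma$; since $\Gamma\subseteq 2^{<\omega_1}$ the recursion must stop at some limit $\gamma\le\omega_1$ with $y{\restriction}\gamma\notin\Gamma$, giving $y\in X(\Gamma)$. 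The successor-level generators of $\mathcal U_y$ then all lie in $\mathcal U$ (limit levels contribute $\omega$), so the base of $\mathcal U_y$ is contained in $\mathcal U$; as $\mathcal U_y$ is an ultrafilter by part~(1) and $\mathcal U$ is proper, maximality yields $\mathcal U=\mathcal U_y$. The main obstacle is the coherence step in part~(1): one must orient the inequality with $\gamma$ as the smaller index and exploit that the finite error set $F$ lies below the branching level, so that $\bigcup_{\delta\in F}a^x_\delta$ is evaluated on the common stem $x{\restriction}\gamma$ and is rejected by $\mathcal U_x$. Everything else is routine once the observation that $\lambda_x$ is a limit is in place.
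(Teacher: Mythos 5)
Your proof is correct and takes essentially the same route as the paper's: part (1) decides each generator $a_\sigma$ by applying coherence along a branch through $\sigma$ and exploiting that the finite error set $F$ lies below the branching level $\gamma$ (so $\omega\setminus a^x_F$ and $\omega\setminus a^x_\gamma$ are already in $\mathcal U_x$), and part (2) recursively selects at each successor stage the node whose generator avoids $\mathcal U$. The only differences are cosmetic, such as your explicit preliminary observation that $\lambda_x$ is a limit ordinal and the slightly more detailed bookkeeping in the recursion.
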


\begin{proof} We first prove item (1). Since the family $\mathcal A_{\Gamma,x}$ 
is assumed to be a proper coherent sequence, it follows that no finite union from
 $\mathcal A_{\Gamma,x}$ is cofinite. To show that the filter $\mathcal U_x$ with
 the family from (1) as a base is an ultrafilter, it suffices to show that for
 every $\sigma\in \Gamma$, one of $a_\sigma$ or $a_{\sigma^\dagger}$ is
 in $\mathcal U_x$. This is immediate for all $\sigma\subset x$ so we may
 suppose there is an
  $\alpha\in \dom(x)$ such that $(x\restriction \alpha)\not\subset \sigma$
 and $(x\restriction \alpha)^\dagger\subseteq \sigma$.  If $\sigma=(x\restriction \alpha)^\dagger$,
 then $a_\sigma=\omega\setminus a_{x\restriction\alpha}$ is an element of $\mathcal
 U_x$. Otherwise, 
using that each of $\mathcal A_{\sigma{}^\frown 0}$ and
$\mathcal A_{\sigma^\dagger {}^\frown 0}$ are 
 coherent,  and that $a_{\sigma^\dagger}=\omega\setminus a_\sigma$,
 there is a  finite $F\subset \alpha$ such that one of 
 $\{a_{(x\restriction\alpha)^\dagger}\setminus a_{\sigma}\,\ \  
 a_{(x\restriction\alpha)^\dagger}\setminus a_{\sigma^\dagger}\}$
 is contained in $\bigcup \{ a_{x\restriction\xi{+}1} : \xi{+}1\in F \}$.
 By symmetry, assume that $a_{(x\restriction \alpha)^\dagger}\subset
  a_\sigma\cup \bigcup\{a_{x\restriction\xi{+}1} : \xi{+}1\in F\}$. In other words,
    $a_\sigma$ contains $(\omega\setminus a_{x\restriction\alpha})\cap
     \bigcap \{\omega\setminus a_{x\restriction\xi{+}1} : \xi{+}1\in F\}$,
      which is a member of $\mathcal U_x$.
      
      Now we prove (2). We define $y\in X(\Gamma)$ by recursively 
      defining  an increasing chain  in $\Gamma$.   
      Let $y\restriction 1=\sigma_1$ be chosen so that $a_{\sigma_1}\notin \mathcal U$.
      Assume that an
      increasing chain $\{ \sigma_\beta  \in 2^\beta : \beta <\alpha\}\subset \Gamma$ has
      been chosen so that $a_{\sigma_\beta}\notin \mathcal U$ for all $\beta <\alpha$.
      If $\alpha$ is a limit ordinal, then $\sigma_\alpha = \bigcup \{\sigma_\beta :
      \beta<\alpha\}$. If $\sigma_\alpha\notin \Gamma$, then 
      set $y=\sigma_\alpha\in X(\Gamma)$. By part (1), it follows that $\mathcal U_y = \mathcal U$.
            Assume that $\alpha=\beta+1$ and
      again, simply choose 
       $\sigma_\alpha\supset \sigma_\beta$ so that $a_{\sigma_\alpha}\notin \mathcal U$.
\end{proof}

One of the features of a $\mathbb T$-algebra is that the Stone space can be analyzed
by using the subalgebras (i.e. the Ostaszewski style spaces)
generated by the generators indexed by a given branch.  The space associated
with $\mathcal A_{\Gamma,x}$, for $x\in X(\Gamma)$, was described
in Proposition \ref{two}.

\begin{lemma} Let 
$\mathcal A_\Gamma$ be $\mathbb T$-algebra\label{sixteen} and let $x\in X(\Gamma)$. 
  the mapping
 $\varphi_x :X(\Gamma) \mapsto (\lambda_x+1,\tau(\mathcal
 A_{\Gamma,x}))$ is continuous where
 $\varphi_x(x)=\lambda_x$ and
 for all $x\neq y\in X(\Gamma)$,
 $\varphi_x(y)$ is the unique $\alpha<\lambda_x$ such
 that $(y\restriction\alpha+1)^\dagger = x\restriction\alpha+1$. In
 particular, $\varphi_x(x)\neq \varphi_x(y)$ for all $y\in X(\Gamma)\setminus \{x\}$.
\end{lemma}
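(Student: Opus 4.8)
The plan is to recognize $\varphi_x$ as the map of Stone spaces dual to the inclusion of the subalgebra generated by the branch $x$ into $\mathcal B_\Gamma$; once this identification is in place, continuity is automatic and the only real work is the identification itself. First I would fix the two relevant identifications. By the previous lemma I identify $X(\Gamma)$ with the Stone space of $\mathcal B_\Gamma$ via $y\mapsto \mathcal U_y$, so that the basic clopen sets are $\{y : a_\sigma\in\mathcal U_y\}$ for $\sigma\in\Gamma$. Writing $\mathcal B_{\Gamma,x}$ for the subalgebra of $\mathcal B_\Gamma$ generated by $\mathcal A_{\Gamma,x}=\langle a^x_\alpha:\alpha<\lambda_x\rangle$, the discussion surrounding Proposition \ref{two} identifies the codomain $(\lambda_x{+}1,\tau(\mathcal A_{\Gamma,x}))$ with the Stone space of $\mathcal B_{\Gamma,x}$: a point $\gamma<\lambda_x$ is the ultrafilter $V_\gamma$ generated by $\{a^x_\gamma\}\cup\{\omega\setminus a^x_\delta:\delta<\gamma\}$, the point $\lambda_x$ is the ultrafilter $V_{\lambda_x}$ generated by $\{\omega\setminus a^x_\delta:\delta<\lambda_x\}$, and under this identification the clopen generator $\widehat{a^x}_\beta$ corresponds to the algebra element $a^x_\beta=a_{x\restriction\beta{+}1}$.

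The key step is to verify that, under these identifications, $\varphi_x(y)$ is exactly the restricted ultrafilter $\mathcal U_y\cap\mathcal B_{\Gamma,x}$. For $y=x$ this is immediate, since $\mathcal U_x$ is generated by $\{\omega\setminus a^x_\delta:\delta<\lambda_x\}$, whose restriction to $\mathcal B_{\Gamma,x}$ is $V_{\lambda_x}$, matching $\varphi_x(x)=\lambda_x$. For $y\neq x$, let $\gamma=\varphi_x(y)$, i.e. the first coordinate at which $y$ leaves $x$, so that $y\restriction\gamma=x\restriction\gamma$ and $(y\restriction\gamma{+}1)^\dagger=x\restriction\gamma{+}1$. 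Then $a^y_\delta=a^x_\delta$ for all $\delta<\gamma$, while condition (4) of Definition \ref{fifteen} gives $a^y_\gamma=a_{(x\restriction\gamma{+}1)^\dagger}=\omega\setminus a^x_\gamma$. Consequently $\mathcal U_y$, being generated by the sets $\omega\setminus a^y_\delta$, contains $\omega\setminus a^x_\delta$ for every $\delta<\gamma$ together with $a^x_\gamma$; these are precisely the generators of $V_\gamma$, so $\mathcal U_y\cap\mathcal B_{\Gamma,x}=V_\gamma$, as required.

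With this identification in hand, $\varphi_x$ is the continuous map of Stone spaces dual to the inclusion $\mathcal B_{\Gamma,x}\hookrightarrow\mathcal B_\Gamma$ (restriction of an ultrafilter to a subalgebra always yields an ultrafilter and is continuous). If one prefers a self-contained argument, it suffices to check preimages of a subbase: the subbasic clopen sets of the target are the $\widehat{a^x}_\beta$ and their complements, and using the correspondence $\widehat{a^x}_\beta\leftrightarrow a^x_\beta$ together with the previous paragraph one computes $\varphi_x^{-1}(\widehat{a^x}_\beta)=\{y\in X(\Gamma):a_{x\restriction\beta{+}1}\in\mathcal U_y\}$ and $\varphi_x^{-1}((\lambda_x{+}1)\setminus\widehat{a^x}_\beta)=\{y:a_{(x\restriction\beta{+}1)^\dagger}\in\mathcal U_y\}$, both of which are basic clopen subsets of $X(\Gamma)$. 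The ``in particular'' clause is then immediate from the definition, since $\varphi_x(y)<\lambda_x=\varphi_x(x)$ for every $y\neq x$.

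The one point demanding care is the case analysis in the second paragraph: correctly using condition (4) to flip $a^x_\gamma$ to its complement at the divergence coordinate, and checking that $\varphi_x$ is genuinely well defined, i.e. that two distinct elements of $X(\Gamma)$, being incomparable minimal nodes of $2^{\le\omega_1}\setminus\Gamma$, must diverge at a unique $\gamma<\min(\lambda_x,\lambda_y)\le\lambda_x$. (Incomparability holds because if $x\subsetneq y$ then $x\notin\Gamma$ would violate the minimality of $y$.) Everything beyond this is formal Stone duality, so I expect no further obstacle.
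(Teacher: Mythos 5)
The paper states Lemma \ref{sixteen} without proof, so there is nothing to compare against; judged on its own, your argument is correct and is surely the intended one: identify $X(\Gamma)$ with the Stone space of $\mathcal B_\Gamma$ via $y\mapsto\mathcal U_y$, identify $(\lambda_x{+}1,\tau(\mathcal A_{\Gamma,x}))$ with the Stone space of the subalgebra generated by the branch, check that $\varphi_x(y)$ is the restricted ultrafilter $\mathcal U_y\cap\mathcal B_{\Gamma,x}$ (using Definition \ref{fifteen}(4) at the divergence coordinate), and conclude continuity by Stone duality. Your computation of the subbasic preimages is the self-contained version of the same fact and is also correct. One small point you could make explicit: the standing convention that $\Gamma$ is twinned and has no maximal elements forces every $x\in X(\Gamma)$ to have limit length, which is what guarantees that the divergence coordinate $\gamma$ satisfies $\gamma+1<\min(\lambda_x,\lambda_y)$, so that both $x\restriction\gamma{+}1$ and $y\restriction\gamma{+}1$ lie in $\Gamma$ and the generators $a^x_\gamma$, $a^y_\gamma$ you manipulate are actually defined.
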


\begin{proposition} If $\mathcal A_\Gamma$ is a $\mathbb T$-algebra with the
  ScP then any converging sequence ($\omega$-sequence)
  in $(X(\Gamma), \tau(\mathcal A_\Gamma))$\label{18}
converges to a
point in $X(\Gamma,\aleph_0)$. Every $x\in X(\Gamma,\aleph_0)$ 
 has countable character.
\end{proposition}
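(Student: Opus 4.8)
The plan is to treat the two assertions separately, starting with the (easier) statement about character. Let $x\in X(\Gamma,\aleph_0)$, so that $\lambda_x<\omega_1$ and $\dom(x)=\lambda_x$ is countable. By the lemma identifying the ultrafilters of $\mathcal B_\Gamma$, the point $x$ is the ultrafilter $\mathcal U_x$, and the finite intersections from $\{\omega\setminus a^x_\alpha:\alpha\in\dom(x)\}$ form a filter base for it. Since $\dom(x)$ is countable this is a \emph{countable} filter base, and in a Stone space a filter base for an ultrafilter yields a neighbourhood base of clopen sets at the corresponding point; thus the clopen subsets of $X(\Gamma)$ determined by the sets $\omega\setminus a^x_\alpha$, together with their finite intersections, form a countable neighbourhood base at $x$. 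Hence $x$ has countable character, which is the second assertion.

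For the first assertion, suppose $\langle y_n:n\in\omega\rangle$ is a sequence in $X(\Gamma)$ converging to $y\in X(\Gamma)$; discarding repetitions I may assume $y_n\neq y$ for all $n$, and the goal is to show $\lambda_y<\omega_1$. First I would push the sequence into the Ostaszewski-style space attached to the branch $y$: by Lemma \ref{sixteen} the map $\varphi_y\colon X(\Gamma)\to(\lambda_y+1,\tau(\mathcal A_{\Gamma,y}))$ is continuous with $\varphi_y(y)=\lambda_y$, so $\beta_n:=\varphi_y(y_n)\to\lambda_y$ in $\tau(\mathcal A_{\Gamma,y})$, where each $\beta_n<\lambda_y$ is the level at which $y_n$ first departs from $y$, i.e. $(y_n\restriction\beta_n{+}1)^\dagger=y\restriction\beta_n{+}1$.

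Now assume toward a contradiction that $\lambda_y=\omega_1$. Since every $\beta_n$ is a countable ordinal and $\operatorname{cf}(\omega_1)=\omega_1$, the supremum $\delta:=\sup_n\beta_n$ is again countable, so $\{\beta_n:n\in\omega\}\subseteq[0,\delta]$ with $\delta<\lambda_y$. The strategy is then to exhibit a basic neighbourhood $(\lambda_y{+}1)\setminus\widehat a_F$ of the top point $\lambda_y$, for some finite $F\subseteq\lambda_y$, that contains none of the $\beta_n$; this contradicts $\beta_n\to\lambda_y$. Equivalently, I must capture the countable set $\{\beta_n\}$ inside a single compact-open $\widehat a_F$, that is, show that along the uncountable branch $y$ the $\beta_n$ cannot accumulate at $\lambda_y$.

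This last step is where the ScP is used and is the main obstacle: whether $\{\beta_n\}\subseteq\widehat a_F$ can be arranged is \emph{not} decided by coherence alone, because the tail generators $\widehat a_\xi$ with $\xi>\delta$ may cut into the initial segment $[0,\delta]$. To overcome this I would translate the convergence back to $\omega$: using density of the isolated points, extract from $\langle y_n\rangle$ an infinite $D\subseteq\omega$ that is a pseudointersection of $\mathcal U_y$, so that $D\cap a^y_\alpha$ is finite for every $\alpha<\lambda_y=\omega_1$. The ScP is exactly the splitting condition that forbids such an infinite $D$ from evading every $a^y_\alpha$ along an uncountable branch: it supplies a generator splitting $D$ that is coherent with $y$, forcing $D\cap a^y_\alpha$ to be infinite for some $\alpha<\omega_1$, a contradiction. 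Hence $\lambda_y<\omega_1$, i.e. $y\in X(\Gamma,\aleph_0)$. The crux throughout is this ScP step; the $\varphi_y$-reduction and the cofinality bound $\delta<\omega_1$ are formal, and the real content is that an infinite pseudointersection cannot thread an $\omega_1$-length coherent tower once the splitting condition is imposed.
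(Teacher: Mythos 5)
Your second paragraph (countable character at points of $X(\Gamma,\aleph_0)$) and your reduction of the first assertion via $\varphi_y$ are both fine: pushing a converging sequence $y_n\to y$ down to $\beta_n=\varphi_y(y_n)\to\lambda_y$ in $(\lambda_y{+}1,\tau(\mathcal A_{\Gamma,y}))$ is exactly the right move, and Lemma \ref{sixteen} guarantees $\beta_n<\lambda_y$. The problem is the step where you derive a contradiction from the ScP when $\lambda_y=\omega_1$. First, the target you set yourself --- trapping \emph{all} of $\{\beta_n\}$ inside a single $\widehat a_F$ --- is both stronger than needed and in general unattainable: an initial segment $[0,\delta]$ of an Ostaszewski-style space is open but typically not closed, so a countable set need not be absorbed by one compact-open set. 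What you actually need is only that some $\widehat a_F$ (with $F$ finite, so $\widehat a_F\subseteq\omega_1$ misses the top point) contains \emph{infinitely many} $\beta_n$; then $(\omega_1{+}1)\setminus\widehat a_F$ is a neighbourhood of $\lambda_y$ omitting infinitely many terms, contradicting convergence. And this weaker statement is precisely what the ScP delivers, via the pressing-down argument of Lemma \ref{needScP}: if $\widehat a_F\cap\{\beta_n\}$ were finite for every finite $F$, one presses down on $\gamma\mapsto\{\beta_n\}\cap\widehat a_\gamma$ to produce a stationary set that is not an almost cover. (The case where $\{\beta_n\}$ is finite is handled by noting that a repeated value $\beta$ lies in $\widehat a_\beta$, again a compact open set missing $\lambda_y$.)

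Your substitute for this step --- extracting an infinite $D\subseteq\omega$ from $\langle y_n\rangle$ that is a pseudointersection of $\mathcal U_y$ --- does not work as stated. The $y_n$ are arbitrary points of $X(\Gamma)$, not isolated points; density of $\omega$ does not let you replace a converging sequence by a converging sequence of isolated points unless the space is already known to be Fr\'echet at $y$, which is essentially what is being proved. Moreover, the ScP is a \emph{stationary covering} property of the sets $\widehat a_\gamma$, not a splitting condition on the generators; the assertion that it ``supplies a generator splitting $D$'' describes the mechanism of the final section's construction (splitting families chosen against models $M_\xi$), not anything that follows from the ScP as defined. So the crux of the argument, which you correctly identify as the only non-formal step, is missing; it should be replaced by a direct appeal to Lemma \ref{needScP} (or to its pressing-down proof) applied to the countable set $\{\beta_n:n\in\omega\}$ in $(\omega_1,\tau(\mathcal A_{\Gamma,y}))$.
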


\section{The stationary covering property of coherent sequences}

However the stationary covering property was discovered in 
\cite{Roberto} as a tool for preserving countable compactness 
in what might usefully and informally  be called Ostaszewski style spaces. 
 A connection to the similarly important
 Scarborough-Stone problem was pointed out in \cite{ShelahEfimov} and
in the last section of this paper we briefly explore the 
connections to the splitting number in that setting.

This next property was introduced in
\cite{Roberto} under the name SP  
for the same purpose.

\begin{definition} Say that a coherent sequence
  $\mathcal A = \{ a_\alpha : \alpha\in \lambda\}$ 
 has the
 ScP (stationary covering property)
  if either, $\lambda\in\omega_1$ or
 for each stationary $S\subset \omega_1$,
  there is a finite $F\subset\omega_1$ such that
   $\{{\widehat a}_F\} \cup \{ {\widehat a}_\gamma : \gamma\in S\}$ is a
  cover of $\omega_1$.  Say that $S\cup F$ is a cover and
  that $S$ is an almost cover.
\end{definition}

We use the standard notation $\mathop{NS}_{\omega_1}$ to denote the
ideal of non-stationary subsets of $\omega_1$
and $\mathop{NS}_{\omega_1}^+$ denotes the set of stationary subsets
of $\omega_1$.

\begin{lemma}  If a proper coherent $\omega_1$-sequence\label{needScP}
  $\mathcal A$
  has the ScP, then $\mathcal A$ 
 induces a countably compact topology on $\omega_1$.
\end{lemma}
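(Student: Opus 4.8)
The plan is to verify countable compactness through the characterization that every countable open cover of $\omega_1$ admits a finite subcover, and to feed the ScP into a pressing-down argument. First I would record, from the coherence of $\mathcal A$ and by the same manipulations used in the proof of Proposition \ref{two}, that for each $\gamma<\omega_1$ the family $\{\widehat a_\gamma\setminus\widehat a_F : F\in[\gamma]^{<\omega}\}$ is a neighbourhood base at $\gamma$. The point is that any basic open set containing $\gamma$, after being intersected with the compact open neighbourhood $\widehat a_\gamma\subseteq\gamma+1$, can be shrunk to one of this form: a trace $\widehat a_\gamma\cap\widehat a_\beta$ with $\beta>\gamma$ is, by coherence, either contained in some $\widehat a_G$ with $G\subseteq\gamma$ finite (when $a_\gamma\cap a_\beta\subseteq a_G$) or else cocontained in $\widehat a_\gamma$ modulo such a $\widehat a_G$ (when $a_\gamma\subseteq a_\beta\cup a_G$), so in either case the correction is absorbed by removing a single $\widehat a_F$ with $F\subseteq\gamma$ finite.

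Given a countable open cover, I replace it by an increasing sequence $\langle V_n : n\in\omega\rangle$ of open sets and suppose toward a contradiction that no single $V_n$ equals $\omega_1$. For each $\gamma<\omega_1$ I fix $n(\gamma)\in\omega$ and a finite $F_\gamma\subseteq\gamma$ with $\gamma\in\widehat a_\gamma\setminus\widehat a_{F_\gamma}\subseteq V_{n(\gamma)}$, using the neighbourhood base just described. On the club of limit ordinals the map $\gamma\mapsto\sup F_\gamma$ is regressive, so by Fodor's lemma it is constant on a stationary set; since below a fixed bound there are only countably many finite subsets and only countably many values of $n(\gamma)$, two further applications of the pigeonhole principle yield a stationary $S\subseteq\omega_1$, a fixed $k_0\in\omega$, and a fixed finite $F^*\subseteq\omega_1$ with $\widehat a_\gamma\setminus\widehat a_{F^*}\subseteq V_{k_0}$ for every $\gamma\in S$ (the case in which $F_\gamma$ is empty on a stationary set is handled identically with $F^*=\emptyset$).

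Now I apply the ScP to the stationary set $S$: there is a finite $F$ with $\omega_1=\widehat a_F\cup\bigcup_{\gamma\in S}\widehat a_\gamma$. For each $\gamma\in S$ we have $\widehat a_\gamma\subseteq\widehat a_{F^*}\cup(\widehat a_\gamma\setminus\widehat a_{F^*})\subseteq\widehat a_{F^*}\cup V_{k_0}$, whence $\omega_1\subseteq\widehat a_{F\cup F^*}\cup V_{k_0}$. The set $\widehat a_{F\cup F^*}$ is a finite union of the compact open sets supplied by Proposition \ref{two}, hence compact, so it is covered by finitely many and therefore, by monotonicity of the sequence, by a single $V_{k_1}$. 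Then $\omega_1=V_{\max(k_0,k_1)}$, contradicting the assumption that no single $V_n$ covers. I expect the main obstacle to be the first step: pinning down the neighbourhood base precisely enough that the sets $\widehat a_\gamma$ returned by the ScP can be absorbed, up to the single bounded correction $\widehat a_{F^*}$, into $V_{k_0}$. Everything after that is a clean pressing-down-plus-compactness argument, and it is exactly the constancy of both $n(\gamma)$ and $F_\gamma$ on a stationary set that allows the one covering instance guaranteed by the ScP to finish the proof.
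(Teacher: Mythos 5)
Your argument is correct, but it runs in the opposite direction from the paper's. The paper verifies countable compactness through accumulation points rather than covers: given a countably infinite $X\subseteq\omega_1$ with $\widehat a_F\cap X$ finite for every nonempty finite $F$ (which is all that a failure of countable compactness can produce, since each $\widehat a_\gamma$ is a compact neighbourhood of $\gamma$), it pigeonholes the finite traces $X\cap\widehat a_\gamma$ for $\gamma>\sup(X)$ down to a constant value $Y$ on a stationary set $S$, and then observes that $S\cup F$ misses the infinite set $X\setminus(Y\cup\widehat a_F)$ for every finite $F$, so $S$ is a stationary set that is not an almost cover. That is the entire proof; no analysis of neighbourhood bases is required. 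Your cover formulation is equivalent, and your steps all go through --- the pressing-down on the pair $(F_\gamma,n(\gamma))$, the single invocation of the ScP, and the absorption of the compact set $\widehat a_{F\cup F^*}$ into a single $V_{k_1}$ are fine --- but the cost is concentrated in the neighbourhood-base lemma that you rightly flag as the main obstacle. That lemma is true and is useful information in its own right (it is the precise sense in which the point $\gamma$ has the sets $\widehat a_\gamma\setminus\widehat a_F$, $F\in[\gamma]^{<\omega}$, as a base), but note that in the case $a_\gamma\cap a_\beta\subseteq a_G$ the promotion to $\widehat a_\gamma\cap\widehat a_\beta\subseteq\widehat a_{G'}$ genuinely uses properness of the sequence, not coherence alone; properness is in the hypothesis, so this is a matter of attribution rather than a gap. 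In short, you prove a stronger local fact and then finish with compactness of $\widehat a_{F\cup F^*}$, while the paper bypasses the local analysis entirely by working with a closed discrete countable set instead of a countable open cover.
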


\begin{proof} Let $X$ be an infinite countable subset of $\omega_1$
  and suppose that $\widehat{a}_F\cap X$ is finite for all
  non-empty  finite $F\subset\omega_1$.   For each
  $\gamma >\sup(X)$, let $X_\gamma $ be the finite set
  $X\cap \widehat{a}_\gamma$. By the pressing down lemma, we
  may choose a stationary $S\subset \omega_1$ and a single
  finite set $Y\subset X$ so that $Y = X_\gamma$ for all $\gamma\in
  S$. We now have a contradiction since it follows that
   $S$ is not an almost cover. 
\end{proof}

For the remainder of the section assume that
 $\mathcal A = \{ a_\alpha : \alpha \in \omega_1 \}$ is  a coherent
sequence with ScP.

\begin{lemma} If $S\subset \omega_1$   is a\label{subset}
 cover,
   then for each stationary $S'$, there is a finite $F\subset
   S$ such that $S'\cup F$ is a cover.
\end{lemma}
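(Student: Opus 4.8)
The plan is to use the stationary covering property just once, to approximate $S'$ by a finite correction living somewhere in $\omega_1$, and then to invoke compactness to relocate that correction inside the given cover $S$.

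First I would apply the ScP to the stationary set $S'$. Since $\mathcal A$ is an $\omega_1$-sequence, this yields a finite $F_0 \subset \omega_1$ for which $S' \cup F_0$ is a cover, that is $\widehat{a}_{F_0} \cup \bigcup_{\gamma \in S'} \widehat{a}_\gamma = \omega_1$. The only defect is that $F_0$ need not be a subset of $S$, so the actual task is to replace the contribution $\widehat{a}_{F_0}$ by one indexed by finitely many elements of $S$.

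The key step is a compactness argument based on Proposition \ref{two}. Each $\widehat{a}_\beta$ is a compact open subset of $(\omega_1{+}1, \tau(\mathcal A))$, and $\widehat{a}_\beta \subseteq \beta{+}1 \subset \omega_1$; hence the finite union $\widehat{a}_{F_0} = \bigcup_{\beta \in F_0} \widehat{a}_\beta$ is a compact subset of $\omega_1$. Because $S$ is a cover, the family $\{\widehat{a}_\gamma : \gamma \in S\}$ is an open cover of $\omega_1$, and in particular of the compact set $\widehat{a}_{F_0}$; extracting a finite subcover produces a finite $F \subset S$ with $\widehat{a}_{F_0} \subseteq \widehat{a}_F$.

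Finally I would check that this $F$ works: enlarging the finite correction can only enlarge the union, so $\widehat{a}_F \cup \bigcup_{\gamma \in S'} \widehat{a}_\gamma \supseteq \widehat{a}_{F_0} \cup \bigcup_{\gamma \in S'} \widehat{a}_\gamma = \omega_1$, which is precisely the assertion that $S' \cup F$ is a cover. I expect no genuine obstacle here; the single point requiring care is recognizing that the compactness furnished by Proposition \ref{two} applies to the clopen set $\widehat{a}_{F_0}$, and it is exactly this that upgrades the ScP's ``correction somewhere in $\omega_1$'' into a ``correction inside $S$''.
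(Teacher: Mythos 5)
Your proof is correct and follows essentially the same route as the paper: apply the ScP to $S'$ to obtain a finite correction $F_0$, then use the compactness of $\widehat{a}_{F_0}$ together with the open cover $\{\widehat{a}_\gamma : \gamma\in S\}$ to replace $F_0$ by a finite $F\subset S$. No issues.
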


\begin{proof} Choose finite $F_1$ such that $S'\cup F_1$ is a cover.
  For each $\alpha\in F_1$,  ${\widehat a}_\alpha$
is a compact space and $\{ {\widehat a}_\gamma : \gamma\in S\}$ is an
open cover. Therefore, there is a finite $F\subset S$ such that
$\bigcup_{\alpha\in F_1} {\widehat a}_\alpha $ is contained in
${\widehat a}_{F}$. This implies that $S'\cup F$ is a cover.
\end{proof}

\begin{lemma} If $S\subset\omega_1$ is a cover,
 then there is a finite\label{bigsets}
   $F\subset\omega_1$ such that for all $\alpha \in\omega_1\setminus
  {\widehat a}_F$, the set of $\gamma\in S$ such that
   $\alpha\in {\widehat a}_\gamma$ is uncountable. 
\end{lemma}

\begin{proof}  For each limit ordinal $\gamma\in\omega_1$,
  choose the minimal $\alpha_\gamma\in S$ such that
  $\gamma\in \widehat{a}_{\alpha_\gamma}$. For each limit
$\gamma$,
choose finite $H_\gamma\subset\gamma$ so that
$a_{\gamma}\setminus a_{\alpha_\gamma}\subset \widehat{a}_{H_\gamma}$.
This can be restated as $a_\gamma\subset a_{\alpha_\gamma}\cup
 \widehat{a}_{H_\gamma}$. 
By the pressing down lemma, we may
choose a stationary $S\subset\omega_1$ so that there is a finite
$H\subset\omega_1$ so that $H_\gamma=H$ for all $\gamma\in S$.
By Lemma \ref{subset}, choose, 
for each $\gamma\in S$,
 $F_\gamma\subset S$ so that
 $F_\gamma\cup (S\setminus\gamma)$ is a cover. By another application
 of the  pressing down lemma, there is a stationary $S'\subset S$
 and a finite $F$ so that $F=F_\gamma\cap \gamma$ for all $\gamma\in
 S'$. 

We show that $F\cup H$ is as required.
Fix any $\alpha \in \omega_1\setminus \widehat{a}_F$ and choose
any $\alpha<\gamma\in S'$.  We prove there is a $\delta\in
S\setminus\gamma$ with $\alpha\in \widehat{a}_\gamma$. 
Since $F \cup [(S'\cup S)\setminus\gamma]$
is a cover, $\alpha\in \widehat{a}_\beta$ for some
 $\beta\in (S'\cup (F_\gamma\setminus\gamma))\subset
 (S'\cup S)\setminus \gamma$.
If $\beta\in S$, then let $\delta=\beta$. 
 If $\beta\in S'\setminus S$,
  then $\alpha\in \widehat{a}_{\alpha_\beta}\cup
  \widehat{a}_{H}$. Since
  $\alpha\notin \widehat{a}_H$, then $\delta=\alpha_\beta$
  and we indeed have
  that $\alpha\in \widehat{a}_\delta$ for some $\delta\in S\setminus
    \gamma$. 
\end{proof}

This next result is our first mention of forcing. We refer the reader
to \cite{Kunen} for standard background and notation.
We will use the notation of placing a dot over a letter, such
as $\dot Q$, to indicate that we are refering to a name. 
Also, following \cite{Kunen}, we use
 $\check{x}$ as the notation for the canonical name of the set $x$
from the ground model (with the poset being clear from the context).
 For a poset
$P$ and a set $X$ we use the term nice $P$-name for a subset of $X$ to
mean a name of the form $\dot Y = 
\bigcup \{ \{\check{x}\}\times A_x : x\in
X\}$ where, for each $x\in X$, $A_x$ is a (possibly empty) antichain
of $P$. If $G$ is a $P$-generic filter, then $\val_G(\dot Y)$ denotes
the valuation or interpretation of the name $\dot Y$ and is defined
to be the set $\{ x\in X : G\cap A_x\neq\emptyset\}$. 
Every subset of $X$ in the forcing extension does equal the valuation
of a nice name, so it is often sufficient to only consider nice
names.

\begin{lemma}[\cite{Roberto}]
For a limit ordinal $\lambda$, 
if $\langle P_\xi , \dot Q_\xi : \xi \in \lambda\rangle$
  is a finite support\label{lemma7}
  ccc iteration such that, for all $\xi<\lambda$, $P_\xi$ forces
  that $\mathcal A$ has the ScP, then $P_\lambda$ forces that
  $\mathcal A$ has the ScP (hence Cohen forcing preserves that
  $\mathcal A$ has the ScP).
\end{lemma}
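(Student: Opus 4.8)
The plan is to fix a nice $P_\lambda$-name $\dot S$ and a condition $p$ with $p\Vdash\dot S$ stationary, and to produce, in $V[G_\lambda]$ below $p$, a finite $F$ with $\{\widehat a_F\}\cup\{\widehat a_\gamma:\gamma\in\dot S\}$ a cover; since $F$ may be chosen in the extension, this suffices. The first step is carried out in $V$: pass to the ground-model set $S^*=\{\gamma:(\exists q\le p)\,q\Vdash\check\gamma\in\dot S\}$. As $p\Vdash\dot S\subseteq\check S^*$ and $p\Vdash\dot S$ is stationary, $S^*$ is stationary in $V[G_\lambda]$, hence (stationarity being downward absolute) stationary in $V$. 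Since $V\models$ ScP, Lemma~\ref{subset} and Lemma~\ref{bigsets} apply to $S^*$: there is a finite $F_1\subseteq\omega_1$ so that for every $\alpha\notin\widehat a_{F_1}$ the set $D_\alpha=\{\gamma\in S^*:\alpha\in\widehat a_\gamma\}$ is uncountable. I also record that ``being a cover'' is absolute between $V$ and any extension, since each $\widehat a_\gamma$ depends only on $\mathcal A$ and is computed identically in all models (Proposition~\ref{two}).

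The argument then divides according to $\operatorname{cf}(\lambda)$. If $\operatorname{cf}(\lambda)>\omega_1$, the nice name $\dot S$ has support of size at most $\aleph_1$, which is bounded below $\lambda$; thus $\dot S$ is equivalent to a $P_\xi$-name for some $\xi<\lambda$ and $S=\val_G(\dot S)\in V[G_\xi]$. As $S$ is stationary in $V[G_\lambda]$ it is stationary in $V[G_\xi]$, and the hypothesis that $P_\xi$ forces ScP yields a finite $F$ with $S\cup F$ a cover in $V[G_\xi]$, hence in $V[G_\lambda]$. If $\operatorname{cf}(\lambda)=\omega$, fix $\langle\lambda_n:n\in\omega\rangle$ increasing with $\lambda=\sup_n\lambda_n$; then $P_\lambda=\bigcup_n P_{\lambda_n}$ and, each antichain of $\dot S$ being countable, I can write $\dot S$ as an increasing union of $P_{\lambda_n}$-names $\dot S_n$ with $\val_G(\dot S_n)\subseteq\val_G(\dot S)$ and $\val_G(\dot S)=\bigcup_n\val_G(\dot S_n)$. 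A stationary set that is an increasing countable union must have a stationary term (otherwise clubs $C_n$ disjoint from the $n$-th term give a club $\bigcap_n C_n$ disjoint from the union), so some $\val_G(\dot S_n)$ is stationary; it lies in $V[G_{\lambda_n}]$, is stationary there, and is contained in $\val_G(\dot S)$, so ScP at stage $\lambda_n$ furnishes a finite $F$ with $\val_G(\dot S)\cup F$ a cover.

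The remaining case $\operatorname{cf}(\lambda)=\omega_1$ is the crux, and is where Lemma~\ref{bigsets} does its real work. Here the localization strategy of the previous paragraph fails: a generic stationary set added by a finite-support $\omega_1$-iteration (consider $S=\{\alpha:c_\alpha(0)=1\}$ for Cohen reals $c_\alpha$) need contain no stationary subset lying in any proper initial extension $V[G_\xi]$, so $\dot S$ cannot be pushed below $\lambda$. Instead I would work with the \emph{missed set} $M=\omega_1\setminus\bigl(\widehat a_{F_1}\cup\bigcup_{\gamma\in\dot S}\widehat a_\gamma\bigr)$ and suppose, toward a contradiction, that below some $p'\le p$ the set $M$ is contained in no $\widehat a_F$ (i.e.\ $\dot S$ is not forced to be an almost cover). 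For each $\alpha\in M$ the uncountable set $D_\alpha\subseteq S^*$ is disjoint from $\dot S$, since any $\gamma\in D_\alpha$ would place $\alpha$ in the cover; thus $\dot S$ avoids $\bigcup_{\alpha\in M}D_\alpha$. The plan is to mirror the pressing-down argument of Lemma~\ref{needScP}: the uncountable multiplicity of the $D_\alpha$ is exactly the redundancy that survives when the witness assignment $\alpha\mapsto\gamma$ and the finitely many supporting coordinates are pressed down via Fodor's lemma back into $V$, producing a \emph{ground-model} stationary subset of $S^*$ with a fixed finite trace that fails to be an almost cover, contradicting $V\models$ ScP.

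The main obstacle is precisely this last case: transferring the covering from the ground-model stationary set $S^*$, whose cover is guaranteed by ScP in $V$, down to the genuinely new generic stationary set $\dot S\subseteq S^*$, for which no initial-segment localization exists. The two ingredients that make it go through are the absoluteness of the $\widehat a_\gamma$ and of ``being a cover,'' so that ground-model covering data is directly usable in the extension, and the uncountable redundancy of Lemma~\ref{bigsets}, which is robust enough to be preserved both under the thinning from $S^*$ to $\dot S$ and under the pressing-down reflection. Finally, the parenthetical claim that Cohen forcing preserves ScP follows by viewing the Cohen algebra as a finite-support iteration and combining this limit lemma with the routine preservation of ScP by a single Cohen real at successor stages.
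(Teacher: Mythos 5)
Your preliminary reductions and the two easy cofinality cases ($\operatorname{cf}(\lambda)>\omega_1$ and $\operatorname{cf}(\lambda)=\omega$) are correct, and they correctly isolate the real content of the lemma in the case $\operatorname{cf}(\lambda)=\omega_1$. But that case is exactly where your proposal stops being a proof and becomes a ``plan,'' and the plan as described does not work. You propose to press the failure ``back into $V$, producing a ground-model stationary subset of $S^*$ \dots that fails to be an almost cover, contradicting $V\models$ ScP.'' There is no such ground-model set to be found: every stationary set in $V$ \emph{is} an almost cover by hypothesis, and, as you yourself observe with the Cohen-real example, the generic stationary set $\val_G(\dot S)$ need not contain or determine any stationary set from $V$ (or from any proper initial extension). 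The sets $D_\alpha$ you introduce are uncountable but need not be stationary, and their disjointness from $\val_G(\dot S)$ contradicts nothing in $V$. Tellingly, in this crucial case your argument invokes only the instance $\xi=0$ of the hypothesis (ScP in $V$), whereas the assumption that \emph{every} $P_\xi$, $\xi<\lambda$, forces the ScP is genuinely needed.

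The missing idea is the passage through an intermediate extension. The paper fixes, for each $\xi$ in the stationary set $S_1=\{\xi:\exists p_\xi\le p_0,\ p_\xi\Vdash\xi\in\dot S\}$, the witnessing condition $p_\xi$, runs a $\Delta$-system on the supports $H_\xi$ to get a root $H$ and a stationary $S_2$, chooses $\mu<\lambda$ with $H\subset\mu$, and forms the $P_\mu$-name $\dot S_3=\{(\check\xi,p_\xi\restriction\mu):\xi\in S_2\}$. One then arranges a $P_\mu$-generic $G_\mu$ for which $S_4=\val_{G_\mu}(\dot S_3)$ is stationary, applies the ScP \emph{in $V[G_\mu]$} (this is where the hypothesis on intermediate stages enters) together with Lemma~\ref{bigsets} to obtain $F$, and finishes by a density argument: for any $q$ with $q\restriction\mu\in G_\mu$ and any $\alpha\notin\widehat{a}_F$, one of the uncountably many $\xi\in S_4$ with $\alpha\in\widehat{a}_\xi$ has $H_\xi\setminus H$ disjoint from the support of $q$, and then $p_\xi$ is compatible with $q$ \emph{because their restrictions to $\mu$ lie in the common filter $G_\mu$}. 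This last point is why your plan to stay in $V$ cannot be repaired: the conditions $p_\xi$ may disagree on the finite root $H$ in uncountably many ways (the entries $p_\xi(\beta)$ for $\beta\in H$ are names, not objects you can stabilize by pressing down), so compatibility on $H$ has to be bought by putting the restrictions into a generic filter, i.e., by working over $V[G_\mu]$ and using that $P_\mu$ forces the ScP.
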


\begin{proof} Let $\dot S$ be a $P_\lambda$-name and let $p_0\in
  P_\lambda$ force that $\dot S\in \mathop{NS}_{\omega_1}^+$.
  We prove that $p_0$ does not force that $\dot S$ is a witness
  to the failure of the ScP.
Let $S_1$ denote the set of limit ordinals
$\xi\in\omega_1$ such that there is a $p_\xi<p_0$ with $p_\xi\Vdash
\xi\in \dot S$. Since $p_0\Vdash \dot S\subset S_1$ it follows that
$S_1$ is stationary. 
For each $\xi\in S_1$, let $H_\xi$ denote the
support of $p_\xi$ in the iteration sequence. By the standard
$\Delta$-system arguments, there is a finite $H$ and a stationary set
$S_2\subset S_1$ such that $H_\xi\cap H_\eta = H$ for all $\xi<\eta$
in
$S_2$. 

Fix any $\mu<\lambda$
such that $H\subset \mu$ and $p_0\in P_{\mu}$.
Let
 $\dot S_3 $ be the  nice $P_\mu$-name $\{
(\check{\xi},p_\xi\restriction\mu) : \xi\in S_2\}$.
We prove the well-known
 fact that there is
 $P_\mu$-generic filter $G_\mu$ satisfying that
 $\val_{G_\mu}(\dot S_3)$  is stationary. 
Since $P_\mu$ is ccc, it is a standard fact (see \cite[VII H1]{Kunen})
 that if $\dot C$ is a
$P_\mu$-name of a cub subset of $\omega_1$, then there is cub $C_1$
such that $1\Vdash C_1\subset \dot C$.   
Let $\dot C$ be a $P_\mu$-name of a cub
satisfying that $1$ forces that if $\dot S_3\in
\mathop{NS}_{\omega_1}$,
then $\dot S_3\cap \dot C$ is empty. Choose a cub $C_1$ as above,
 and choose any $\xi\in C_1\cap S_1$.  Since
 $p_\xi\restriction\mu\Vdash \xi \in \dot C\cap \dot S_3$,
 it follows that $p_\xi\restriction \mu$ forces that $\dot S_3$ is
 stationary. 

 Now let $G_\mu$ be a $P_\mu$-generic filter with $p_0\in G_\mu$
 and satisfying that
$S_4 = \val_{G_\mu}(\dot S_3)$ is
 stationary. We finish the proof by working in $V[G_\mu]$
 (where $\mathcal A$ has the ScP). 
 Choose any finite $F_0\subset \omega_1$ so that
  $S_4\cup F_0$ is a cover, and then choose finite $F_0\subset
 F\subset \omega_1 $ as in Lemma \ref{bigsets}. Thus, for each
 $\alpha\in\omega_1\setminus\widehat{a}_F$, the set of $\gamma\in S_4$
 with $\alpha\in \widehat{a}_\gamma$ is uncountable. We prove
 that if $q\in P_\lambda$ and $q\restriction \mu\in G_\mu$, then
 $q$ forces that $F\cup \dot S$ is a cover. Let $q\in P_\lambda$ with
  $q\restriction\mu\in G_\mu$ and fix any $\alpha\in
 \omega_1\setminus\widehat{a}_F$.   
Choose any $\xi\in S_4$ satisfying
 that $H_\xi\setminus H$ is disjoint from the support of $q$
 and that $\alpha\in \widehat{a}_\xi$. Since $p_\xi\restriction\mu$
 and $q\restriction \mu$ are in $G_\mu$, it follows that $p_\xi$
 and $q$ are compatible. Since $p_\xi\Vdash \xi\in\dot S$,
 it follows that $q$ does not force that $\alpha$ is not covered
 by $F\cup \dot S$. 
\end{proof}

 In this next definition we keep the notation simpler by assuming
  that $\mathcal D$ will be clear from the context.
  
  \begin{definition}  Let $\dot S$ be an $\mathbb L(\mathcal D)$-name
  for   a filter $\mathcal D$ on $\omega$. 
For $t\in \omega^{<\omega}$, say that $t\Vdash_w \gamma\in \dot S$ if
there is some $T\in \mathbb L(\mathcal D)$ such that
$T\Vdash \gamma\in \dot S$ and $t$ is the stem of $T$.
Also, for a name $\dot S$ and $t\in T$, let $S_t = \{\gamma\in
\omega_1 : t\Vdash_w \gamma\in \dot S\}$.
\end{definition}

\begin{lemma} 
  If $\mathcal D$ is a filter with countable character,
  then $\mathbb L(\mathcal D)$ preserves that $\mathcal A$
  has the  ScP.
\end{lemma}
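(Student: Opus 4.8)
The plan is to show that $\mathbb L(\mathcal D)$ preserves the ScP by reducing to the combinatorial situation already handled in Lemma~\ref{lemma7}, using the everywhere $\mathcal D$-branching structure of conditions. Suppose $\dot S$ is an $\mathbb L(\mathcal D)$-name and $p_0 = T_0$ forces that $\dot S$ is stationary; I must show $p_0$ does not force $\dot S$ to witness a failure of the ScP. The first step is to pass to the weak-forcing sets $S_t = \{\gamma : t\Vdash_w \gamma\in \dot S\}$ defined just above the statement. Since $\mathcal D$ has countable character, I would fix a countable generating base $\{D_n : n\in\omega\}$ for $\mathcal D$. For each $t\in\omega^{<\omega}$ the set $S_t$ is a subset of $\omega_1$, and the key monotonicity observation is that if $t\subseteq t'$ then $S_t\subseteq S_{t'}$, so along any branch these sets increase.

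The second step is to argue that some $S_t$ is stationary. Because $T_0\Vdash\dot S\in \mathop{NS}_{\omega_1}^+$ and every condition extending $T_0$ has some stem $t\supseteq\sakne(T_0)$, and because for each $\gamma$ that is forced into $\dot S$ by a condition there is a stem $t$ with $t\Vdash_w\gamma\in\dot S$, one shows $\bigcup_{t\supseteq\sakne(T_0)} S_t$ contains a stationary set (indeed $p_0$ forces $\dot S\subseteq \bigcup_t S_t$). Since $\omega^{<\omega}$ is countable and a countable union of nonstationary sets is nonstationary, there must be a single stem $t_1\supseteq\sakne(T_0)$ with $S_{t_1}$ stationary. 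This is the crucial reduction: we have produced a \emph{ground-model} stationary set $S_{t_1}$.

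Now in the ground model, where $\mathcal A$ has the ScP, I apply the ScP to the stationary set $S_{t_1}$ (and Lemma~\ref{bigsets}) to obtain a finite $F\subset\omega_1$ such that $S_{t_1}\cup F$ is a cover and, for each $\alpha\in\omega_1\setminus\widehat{a}_F$, the set of $\gamma\in S_{t_1}$ with $\alpha\in\widehat{a}_\gamma$ is uncountable. The final step, paralleling the closing paragraph of Lemma~\ref{lemma7}, is to show that $T_0$ forces $F\cup\dot S$ to be a cover. Fix $\alpha\in\omega_1\setminus\widehat{a}_F$ and an arbitrary condition $T\le_0 T_0$ (or $T\subseteq T_0$); I want to find $\gamma\in S_{t_1}$ with $\alpha\in\widehat{a}_\gamma$ and a condition below $T$ forcing $\gamma\in\dot S$. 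For each such $\gamma$ there is a witnessing tree $T_\gamma$ with stem $t_1$ forcing $\gamma\in\dot S$. The point is to amalgamate $T$ with some $T_\gamma$: since both have stem compatible with $t_1$ and are everywhere $\mathcal D$-branching, and since there are uncountably many eligible $\gamma$ while $\mathcal D$ is a fixed filter, I can intersect $T$ with $T_\gamma$ along the common stem to get a nonempty condition in $\mathbb L(\mathcal D)$ forcing both $T$'s relevant behavior and $\gamma\in\dot S$, hence $\alpha\in\widehat{a}_\gamma$ with $\gamma\in\dot S$.

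The main obstacle I expect is precisely this amalgamation step: ensuring that $T\cap T_\gamma$ (or a suitable fusion below the common stem $t_1$) is again a legitimate condition in $\mathbb L(\mathcal D)$, i.e.\ that it remains everywhere $\mathcal D$-branching. This is where the countable character of $\mathcal D$ is essential: above each node one is intersecting two sets in $\mathcal D$, which stays in $\mathcal D$ since $\mathcal D$ is a filter, but to carry this out simultaneously along the whole tree and keep the branching in $\mathcal D$ requires that the relevant sets come from (or refine to) the countable base $\{D_n\}$, so that a single fusion works against uncountably many candidate $\gamma$ at once. Making the selection of $\gamma$ uniform enough that one fixed extension of $T$ succeeds — rather than needing a different extension for each $\gamma$ — is the delicate part, and it is exactly what the uncountability conclusion of Lemma~\ref{bigsets} is designed to supply.
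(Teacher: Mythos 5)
Your overall shape is right---pass to the sets $S_t$, find one $t_1$ with $S_{t_1}$ stationary, get a finite $F$ in the ground model with $S_{t_1}\cup F$ a cover, and lift this back through the forcing---but two steps do not survive scrutiny. First, the monotonicity claim that $t\subseteq t'$ implies $S_t\subseteq S_{t'}$ is false: if $\gamma\in S_t$ is witnessed by a tree $T_\gamma$ with stem $t$, you only get $\gamma\in S_{t'}$ for those $t'$ that actually lie in $T_\gamma$, and different $\gamma$ have different witnessing trees. This is not cosmetic; the entire difficulty of the lemma is that $S_{t^\frown j}$ can lose members of $S_t$, and the countable character of $\mathcal D$ is used precisely to control that loss uniformly.

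Second, and more seriously, the closing amalgamation does not go through because you are trying to prove too much: you aim to show that $T_0$ itself forces $F\cup\dot S$ to be a cover, so you must handle an arbitrary $T\leq T_0$. Such a $T$ may have a stem incomparable with $t_1$, or may simply omit $t_1$, in which case $T\cap T_\gamma$ is a finite branch and not a condition; the uncountability of $\{\gamma\in S_{t_1}:\alpha\in\widehat a_\gamma\}$ from Lemma \ref{bigsets} cannot repair this, since all the witnesses $T_\gamma$ share the same stem $t_1$. (Lemma \ref{bigsets} is the right tool in the finite-support iteration argument, where compatibility is arranged through disjoint supports; here the obstruction is tree-theoretic.) What is actually needed---and is where the countable base $\{D_n\}$ earns its keep---is a fusion producing a single condition $T_1$ with stem $t_1$ such that $S_s\cup F$ is a cover for \emph{every} node $s\in T_1$: for each $\gamma\in S_{t_1}$ pick $n_\gamma$ with $D_{n_\gamma}$ contained in the first-level branching set of $T_\gamma$, use a pigeonhole to find $n$ with $S^n_{t_1}=\{\gamma : n_\gamma\leq n\}$ stationary (so that $S^n_{t_1}\subseteq S_{t_1{}^\frown j}$ for all $j\in D_n$), and use Lemma \ref{subset} to replace the cover $S_{t_1}\cup F$ by $S^n_{t_1}\cup F'\cup F$ with $F'\subseteq S_{t_1}$ finite, absorbing $F'$ into $S^n_{t_1}$ by enlarging $n$. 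Iterating this along the tree yields $T_1$, and then the intersection argument you describe works at the stem of any extension of $T_1$, since that stem belongs to $T_1$. Note that this only shows that \emph{some} extension of $T_0$ forces $F\cup\dot S$ to be a cover, which by density is all the preservation statement requires.
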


\begin{proof}
  Let $\dot S$ be a name of a stationary set. For each
   $t\in \omega^{<\omega}$, let $S_t = \{ \gamma : t\Vdash_w \gamma\in
  \dot S\}$. We must prove that there is a $T\in\mathbb L(\mathcal D)$
 and an $F\in
       [\omega_1]^{<\aleph_0}$ such that $T\Vdash \dot S\cup F$ is a cover.
Note  that if $T\Vdash \dot S\notin
\mathop{NS}_{\omega_1}$, then $S_t\notin \mathop{NS}_{\omega_1}$ for all
$t\in T$.
Fix any
  $t\in \omega^{<\omega}$ such that
$S_{t}$ is stationary  Fix any finite $F\subset \omega_1$ such that $S_{t}\cup
F$ is a cover. Let $\{ D_n : n\in \omega\}$ be a descending base for
$\mathcal D$. For each $\gamma \in S_t$, choose $T_\gamma <_0
(\omega^{<\omega})_t$ forcing that $\gamma\in \dot S$.
For each $\gamma\in S_t$, choose $n_\gamma\in\omega$ so that
$D_{n_\gamma} \supset \{ j : t^\frown j\in T_\gamma\}$. Choose $n$
so that $S_t^n = \{ \gamma \in S_t: n_\gamma \leq n\}$ is stationary.
Note that $S_{t}^n\subset S_{t^\frown j}$ for all $j\in D_n$. 
By Lemma \ref{subset}, we may choose
a finite $F'\subset S_t$ so that $F\cup S_t^n\cup F'$ is a cover.
Since $F'\subset S_t$ we can increase $n$ and arrange that
 $F'\subset S_t^n$. It now follows that $S_{t^\frown j}\cup F$ is a
cover
for all $j\in D_n$. Continuing this construction we can produce
 $T_1<_0 (\omega^{<\omega})_t$ such that, for 
all $t\in T_1$, 
$S_t \cup F$ is a cover. It then follows easily that
 $\dot S\cup F$ is a cover. 
\end{proof}

\begin{lemma}
  Assume that $\mathcal D$ is an ultrafilter and that $\dot S$ is an
  $\mathbb L(\mathcal D)$-name of a stationary subset of $\omega_1$.
If, for\label{isacover}
 some $T\in \mathbb L(\mathcal D)$, $S_t$ is a cover for all
$t\in T$, then $T\Vdash \dot S$ is a cover.
\end{lemma}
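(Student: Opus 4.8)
The plan is to reduce the statement ``$T\Vdash\dot S$ is a cover'' to a density argument carried out one ground-model ordinal at a time. As in the preceding lemmas the covers in question share a single finite set $F\subset\omega_1$, so ``$S_t$ is a cover'' means $S_t\cup F$ is a cover and the goal is to force $\dot S\cup F$ to be a cover. Since the forcing is ccc, $\omega_1$ is preserved and each $\widehat a_\gamma$ (for $\gamma\in\omega_1$) is a ground-model set, so being a cover is a pointwise condition over the ground-model ordinals; the points of $\widehat a_F$ are covered outright, so it suffices to show that for each fixed $\alpha\in\omega_1\setminus\widehat a_F$ the set of conditions forcing $(\exists\gamma\in\dot S)\ \alpha\in\widehat a_\gamma$ is dense below $T$.

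Next I would carry out the amalgamation. Fix $\alpha$ and an arbitrary $T'\le T$, and set $t'=\sakne(T')$; since $T'\subseteq T$ we have $t'\in T$, so by hypothesis $S_{t'}$ is a cover and hence there is $\gamma\in S_{t'}$ with $\alpha\in\widehat a_\gamma$. Unwinding $\gamma\in S_{t'}$, that is $t'\Vdash_w\gamma\in\dot S$, produces a condition $T_\gamma\in\mathbb{L}(\mathcal{D})$ with $\sakne(T_\gamma)=t'$ and $T_\gamma\Vdash\gamma\in\dot S$. The key move is to intersect: put $T''=T'\cap T_\gamma$. Below $t'$ both trees consist only of the initial segments of $t'$, so they agree there; at every node $s\supseteq t'$ the successor sets $\{\,j:s^\frown j\in T'\,\}$ and $\{\,j:s^\frown j\in T_\gamma\,\}$ both lie in $\mathcal{D}$, and since $\mathcal{D}$ is a filter their intersection is again in $\mathcal{D}$. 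Hence $T''$ is everywhere $\mathcal{D}$-branching above $t'$ with $\sakne(T'')=t'$, so $T''\in\mathbb{L}(\mathcal{D})$ with $T''<_0 T'$ while also $T''\subseteq T_\gamma$. Consequently $T''\Vdash\gamma\in\dot S$ and $\alpha\in\widehat a_\gamma$, so $T''$ forces $\alpha$ to be covered by $\dot S$.

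Since $T'$ and $\alpha$ were arbitrary, the displayed density holds for every ground-model $\alpha$, and the reduction of the first paragraph yields $T\Vdash\dot S\cup F$ is a cover. I expect the only real obstacle to be the verification that the amalgam $T''=T'\cap T_\gamma$ is a legitimate member of $\mathbb{L}(\mathcal{D})$ with stem exactly $t'$; this is precisely where the closure of $\mathcal{D}$ under finite intersections, together with its being free (so that each successor set above $t'$ is infinite and $t'$ remains a genuine branching node), is used. Notably this argument needs only that $\mathcal{D}$ is a free filter: the ultrafilter hypothesis and the stationarity of $\dot S$ are inherited from the ambient setting and are not essential to this particular lemma.
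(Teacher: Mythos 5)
Your proof is correct and follows essentially the same route as the paper's: fix $\alpha$ and $T'\le T$, use that $S_{\sakne(T')}$ is a cover to find $\gamma\in S_{\sakne(T')}$ with $\alpha\in\widehat a_\gamma$, and amalgamate $T'$ with a witness $T_\gamma$ having the same stem by intersecting the two trees. The paper compresses this to three lines; your only deviations are harmless elaborations (the explicit verification that $T'\cap T_\gamma\in\mathbb L(\mathcal D)$, the optional finite set $F$, and the accurate remark that only the free filter property of $\mathcal D$ is used).
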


\begin{proof}
  Fix any $\alpha $ and $T'<T$. Let $t$ be the stem of $T'$.
  Choose any $\gamma \in S_t$ so that $\alpha\in {\widehat a}_\gamma$
  and choose $T_\gamma <_0 T$ so that $T_\gamma \Vdash \gamma\in \dot
  S$. Then $T'\cap T_\gamma $ forces that $\gamma \in \dot S$. 
\end{proof}

\begin{proposition} For an $\mathbb L(\mathcal D)$-name $\dot S$,
if $S_t\in\mathop{NS}_{\omega_1}^+$ for all $t\in T\in \mathbb
L(\mathcal D)$, then $T\Vdash \dot S \in \mathop{NS}_{\omega_1}^+$.
\end{proposition}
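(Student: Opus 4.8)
The plan is to argue by contradiction. Suppose $T$ does not force $\dot S$ to be stationary. Then there are $T'\le T$ and an $\mathbb{L}(\mathcal{D})$-name $\dot C$ such that $T'\Vdash$ ``$\dot C$ is a club and $\dot C\cap \dot S=\emptyset$'' (the single name $\dot C$ is obtained from the maximal principle). The first step is to replace $\dot C$ by a genuine ground-model club. Since $\mathbb{L}(\mathcal{D})$ is $\sigma$-centered, hence ccc, I let $\dot C'$ be the name equal to $\dot C$ below $T'$ and equal to $\omega_1$ on conditions incompatible with $T'$, so that $1\Vdash \dot C'$ is a club. By the standard ccc fact invoked in the proof of Lemma \ref{lemma7} there is a ground-model club $C_0$ with $1\Vdash C_0\subseteq \dot C'$; in particular $T'\Vdash C_0\subseteq \dot C$, and hence $T'\Vdash C_0\cap\dot S=\emptyset$.

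Next I locate a witness inside $C_0$. Let $t=\sakne(T')$. Since $t\in T'\subseteq T$, the hypothesis gives that $S_t$ is stationary, so $S_t\cap C_0$ is stationary and in particular nonempty; fix $\gamma\in S_t\cap C_0$. By the definition of $S_t$ there is $T_\gamma\in \mathbb{L}(\mathcal{D})$ with $\sakne(T_\gamma)=t$ and $T_\gamma\Vdash \gamma\in \dot S$. Because $T'$ and $T_\gamma$ share the stem $t$ and $\mathcal D$ is a filter, the tree $T'\cap T_\gamma$ is again a condition with stem $t$: at every node $s\supseteq t$ lying in both trees its successor set $\{j:s^\frown j\in T'\}\cap\{j:s^\frown j\in T_\gamma\}$ is an intersection of two members of $\mathcal D$, hence in $\mathcal D$. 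Thus $T'\cap T_\gamma$ is a common extension of $T'$ and $T_\gamma$. It forces $\gamma\in \dot S$ (inherited from $T_\gamma$), and since $\gamma\in C_0$ holds in the ground model while $T'\Vdash C_0\subseteq \dot C$, it also forces $\gamma\in \dot C$. Therefore $T'\cap T_\gamma\Vdash \gamma\in \dot S\cap \dot C$, contradicting $T'\Vdash \dot S\cap \dot C=\emptyset$.

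The essential ingredients are both already in hand, and mirror earlier arguments. Passing from ``$T'\Vdash \dot C$ is a club'' to an honest ground-model club $C_0$ is where ccc (equivalently the $\sigma$-centeredness of $\mathbb{L}(\mathcal{D})$) is indispensable, and it is exactly the reflection used in Lemma \ref{lemma7}. The heart of the matter is that two conditions sharing a stem are automatically compatible through their intersection, which is just the $\sigma$-centeredness of $\mathbb{L}(\mathcal{D})$ organised by stems and is the same mechanism driving Lemma \ref{isacover}; the weak forcing relation $\Vdash_w$ packages precisely the datum needed here, namely a witness $T_\gamma$ of stem $t$ for each $\gamma\in S_t$, so no fusion or diagonalisation is required. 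I expect the only point demanding genuine care to be the verification that $T'\cap T_\gamma$ really lies in $\mathbb{L}(\mathcal{D})$ with unchanged stem $t$, which rests squarely on $\mathcal D$ being closed under finite intersections.
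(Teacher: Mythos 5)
Your proof is correct and follows essentially the same route as the paper: reduce via the ccc property to ground-model clubs, then use that any $\gamma\in S_t\cap C_0$ (with $t=\sakne(T')$) yields a witness $T_\gamma$ of stem $t$ compatible with $T'$ through $T'\cap T_\gamma$. The paper states this in two sentences; you have merely filled in the same details.
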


\begin{proof}
Since $\mathbb L(\mathcal D)$ is ccc ($\sigma$-centered) it suffices
show that if $C$ is a cub and $T' < T$, then $T'\not\Vdash \dot S\cap
C$
is not empty. Of course this is immediate from the fact
that $S_t\cap C$ is not empty where $t=\sakne(T')$.
\end{proof}

\begin{lemma} If $\mathcal D$ is a filter and\label{allstat}
$T\Vdash \dot S\in \mathop{NS}_{\omega_1}^+$
  then there is $T'<_0 T$ such that $S_t\in \mathop{NS}_{\omega_1}^+$
  for all $t\in T'$.
\end{lemma}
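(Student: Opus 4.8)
The plan is to reduce the lemma to a single recursive property of the sets $S_t$ and then build $T'$ by pruning $T$ down to its \emph{good} nodes, meaning those $t$ with $S_t\in\mathop{NS}_{\omega_1}^+$. The property I would first isolate is the following equivalence, valid for every $t\in\omega^{<\omega}$, which I call $(\star)$:
\[
\gamma\in S_t \iff \{\, j\in\omega : \gamma\in S_{t^\frown j}\,\}\in\mathcal D .
\]
Its proof uses only that $\mathcal D$ is a free filter, so each branching set $\{j:t^\frown j\in P\}$ of a condition $P$ with $\sakne(P)=t$ is infinite; hence every immediate successor $t^\frown j$ of the stem is itself a branching node. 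Writing $P_s=\{u\in P:u\subseteq s\text{ or }s\subseteq u\}$, if $\sakne(P)=t$ and $P\Vdash\gamma\in\dot S$ then $P_{t^\frown j}$ has stem exactly $t^\frown j$ and forces $\gamma\in\dot S$ for every $j$ in the $\mathcal D$-set $\{j:t^\frown j\in P\}$, so $\{j:\gamma\in S_{t^\frown j}\}\in\mathcal D$; conversely, amalgamating witnesses $P_j$ (with $\sakne(P_j)=t^\frown j$) over a branching set $A=\{j:\gamma\in S_{t^\frown j}\}\in\mathcal D$ into a single tree with stem $t$ produces a witness for $\gamma\in S_t$.

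The engine of the pruning will be the \emph{successor step}: if $S_t\in\mathop{NS}_{\omega_1}^+$, then $G_t:=\{\,j:S_{t^\frown j}\in\mathop{NS}_{\omega_1}^+\,\}\in\mathcal D$. I would prove this by contradiction. Assume $G_t\notin\mathcal D$, let $E=\{j:S_{t^\frown j}\in\mathop{NS}_{\omega_1}\}$, and for $j\in E$ fix a club $C_j$ disjoint from $S_{t^\frown j}$; put $C_j=\omega_1$ for $j\notin E$. The diagonal intersection $C=\{\gamma:\forall j<\gamma,\ \gamma\in C_j\}$ is a club. For $\gamma\in C$ with $\gamma\geq\omega$, every $j\in E$ satisfies $j<\gamma$, so $\gamma\notin S_{t^\frown j}$; thus $\{j:\gamma\in S_{t^\frown j}\}\subseteq\omega\setminus E=G_t$. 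Since $G_t\notin\mathcal D$ and $\mathcal D$ is upward closed, no subset of $G_t$ lies in $\mathcal D$, so by $(\star)$ we get $\gamma\notin S_t$. Hence $S_t$ misses the club $\{\gamma\in C:\gamma\geq\omega\}$, contradicting $S_t\in\mathop{NS}_{\omega_1}^+$.

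With $(\star)$ and the successor step available, I would assemble $T'$ starting from a good node $t^{*}$ as its stem: let $T'$ consist of all $s\in T$ extending $t^{*}$ all of whose intermediate nodes $u$ with $t^{*}\subseteq u\subseteq s$ are good. At each good node $t\in T'$ the admissible successors are the $j$ with $t^\frown j\in T$ and $S_{t^\frown j}$ stationary, i.e. the set $G_t\cap\{j:t^\frown j\in T\}$, which is the intersection of two members of $\mathcal D$ and hence in $\mathcal D$. Therefore $T'$ is everywhere $\mathcal D$-branching above $t^{*}$, so $T'\in\mathbb L(\mathcal D)$, and by construction every node of $T'$ is good, which is the desired conclusion.

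The hard part will be the base case, namely securing that the stem of $T'$ is good: the successor step only \emph{propagates} goodness upward and cannot manufacture it at the stem, so this is exactly where care is needed to keep $\sakne(T')=\sakne(T)$ and obtain $T'<_0 T$. To produce a good node on $T$ at all I would argue that $\val_G(\dot S)\subseteq\bigcup_{n\geq|\sakne(T)|}S_{g\restriction n}$ for the generic branch $g$ of $T$, since each $\gamma\in\val_G(\dot S)$ is forced into $\dot S$ by some $T_{g\restriction n}$, witnessing $\gamma\in S_{g\restriction n}$; because $\mathbb L(\mathcal D)$ is ccc (so ground-model non-stationary sets stay non-stationary) and $\mathop{NS}_{\omega_1}$ is countably complete, if every $S_{g\restriction n}$ were non-stationary then $\val_G(\dot S)$ would be non-stationary, contradicting $T\Vdash\dot S\in\mathop{NS}_{\omega_1}^+$. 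This locates a good node of $T$ to serve as $\sakne(T')$; I expect the delicate point the full proof must settle is whether $\sakne(T)$ itself can always be taken to be that node, rather than a proper extension of it.
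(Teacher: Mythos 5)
Your argument is correct, but it takes a genuinely different route from the paper's. The paper's proof is a one-step elementary submodel trick: having fixed some $t\in T$ with $S_t$ stationary, it takes a countable $M\prec H(\omega_2)$ containing $T$, $\dot S$, $\mathcal D$ with $\delta=M\cap\omega_1\in S_t$, chooses $T'<_0 T_t$ forcing $\delta\in\dot S$ (intersect $T_t$ with a witness for $\delta\in S_t$), and observes that every $t'\in T'$ above the stem then satisfies $\delta\in S_{t'}\in M$, which certifies at one stroke that each such $S_{t'}$ is stationary. You replace this by the local equivalence $(\star)$ together with the countable completeness of the club filter, propagating stationarity successor by successor. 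Your route is more elementary (no submodels) and buys the extra quantitative information that at every good node a $\mathcal D$-set of immediate successors is good, whereas the paper's $T'$ retains only the successors occurring in the one fixed witness for $\delta\in S_t$; the paper's route is shorter. Note that the paper simply asserts the existence of a $t\in T$ with $S_t$ stationary, and your base-case argument (countably many $S_t$, $\mathop{NS}_{\omega_1}$ is a $\sigma$-ideal, ccc forcing preserves ground-model clubs) is exactly the missing justification, so on this point your write-up is the more complete of the two. Both proofs, of course, only assert stationarity of $S_{t'}$ for $t'$ at or above the stem of $T'$, which is all that is ever used.

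On the delicate point you flag: you are right to worry, and the answer is that the good node \emph{cannot} in general be taken to be $\sakne(T)$, so the conclusion $T'<_0 T$ as literally printed is not achievable. For a counterexample, let $\mathcal D$ be the Fr\'echet filter, $T=\omega^{<\omega}$, let $\{A_j : j\in\omega\}$ be a partition of $\omega_1$ into disjoint stationary sets, and let $\dot S$ name $A_{\dot g(0)}$ where $\dot g$ is the generic branch; then $S_{\langle j\rangle}=A_j$ for each $j$ while your $(\star)$ gives $S_{\emptyseq}=\emptyset$, yet $T$ forces $\dot S$ stationary. The paper's own proof in fact only produces $T'<_0 T_t$, i.e.\ $T'\leq T$ with a possibly longer stem, and that weaker conclusion is what is used in Lemma \ref{thirteen} (where one shrinks $T$ and renames $t_0=\sakne(T)$ afterwards). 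So your construction, which delivers $T'\leq T$ with $\sakne(T')=t^*$ and every node of $T'$ above $t^*$ good, proves exactly what is actually true and needed; the $<_0$ in the statement should be read as $\leq$.
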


\begin{proof}
Fix any $t\in T$ such that $S_t$ is stationary. Fix any countable
elementary submodel $M$ such that $T, \dot S$ are in $M$ and such
that $M\cap \omega_1 = \delta \in S_t$. Choose $T' <_0 T_t$
such that $T'  \Vdash \delta\in \dot S$. Evidently, 
$\delta\in S_{t'}$ for all $t'\in T' $. Since $\delta\in 
S_{t'}\in M$
for all $t'\in T'$, it follows that each such
$S_{t'}$ is in
$\mathop{NS}_{\omega_1}^+$. 
\end{proof}

\begin{lemma} If $\mathbb L(\mathcal D)$ forces that\label{thirteen} 
  $\mathcal A$ does not have the ScP,
  then there is a name $\dot S$ of a stationary set
  and a $T\in \mathbb L(\mathcal D)$ and $\{\{\beta_t\}\cup
F_t : t\in T\}
\subset [\omega_1]^{<\aleph_0}$ such that
  \begin{enumerate}
  \item  $S_t$  is stationary for all $t\in T$,
  \item $S_{t}\cup F_{t}\cup F_{\sakne(T)}$ is a cover, 
    \item for each $\sakne(T)\neq t\in T$,
        $\beta_{t}\in {\widehat a}_{F_{t}}\setminus {\widehat
            a}_{F_{{\sakne(T)}}}$, 
  \item for each $t,t^\frown j\in T$, $F_{t^\frown j}\subset S_{t}$,

  \item for each $T'< T$, there is a $t\in T'$ 
   such that the set 
     $$\bigcup_{t^\frown j\in T'} \{\beta_{t^\frown j}\}\setminus  
\left({\widehat a}_{F_{t}}\cup \bigcup \{ {\widehat a}_\gamma :
    \gamma\in S_{t^\frown j}\}\right)$$ is infinite.
  \end{enumerate}
\end{lemma}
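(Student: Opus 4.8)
The plan is to read the failure of the ScP off a single name and then carry out a fusion, building the subtree $T$ together with the finite sets $F_t$ and the points $\beta_t$ by recursion on the nodes.

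First I would fix $T_0\in\mathbb L(\mathcal D)$ and a name $\dot S$ witnessing the hypothesis, so that $T_0$ forces $\dot S$ to be stationary while no $\dot S\cup F$ with $F$ finite is a cover. Applying Lemma \ref{allstat} I pass to $T_1<_0 T_0$ with $S_t\in\mathop{NS}_{\omega_1}^+$ for every $t\in T_1$; this secures item (1) for every subtree built below $T_1$, and $T_1$ still forces that $\dot S$ is not an almost cover. Writing $s=\sakne(T_1)$, the ground-model ScP of $\mathcal A$ yields a finite $F_s=F_{\sakne(T)}$ with $S_s\cup F_s$ a cover, which is item (2) at the stem.

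Then I would construct $T\le_0 T_1$ recursively, maintaining that $F_s\subseteq F_t$, that $F_t\subset S_{t^-}$ for the predecessor $t^-$ of $t$ (item (4)), and that $S_t\cup F_t\cup F_s$ is a cover (item (2)). The step from $t$ to a successor $t^\frown j$ is governed by Lemma \ref{subset}: since $S_t\cup F_t\cup F_s$ is a cover and $S_{t^\frown j}$ is stationary, there is a finite completion inside $S_t\cup F_t\cup F_s$ making $S_{t^\frown j}$ a cover. To place this completion inside $S_t$ I first thin the branching at $t$ to a set in $\mathcal D$ on which $S_t^n\subseteq S_{t^\frown j}$ for an $n$ large enough that the finitely many points of $F_t$ already lie in $S_t^n$; then $F_t$ itself is available inside $S_t$ and the completions can be taken cumulatively, $F_t\subseteq F_{t^\frown j}\subset S_t$, which keeps the finite sets nested while respecting items (2) and (4) at $t^\frown j$. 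The point $\beta_{t^\frown j}$ is chosen as a witness that $S_{t^\frown j}\cup F_t\cup F_s$ fails to be a cover, i.e. a point of $\widehat a_{F_{t^\frown j}}\setminus\bigl(\widehat a_{F_t}\cup\bigcup\{\widehat a_\gamma:\gamma\in S_{t^\frown j}\}\bigr)$; since $\widehat a_{F_s}\subseteq\widehat a_{F_t}$, this $\beta_{t^\frown j}$ automatically lies outside $\widehat a_{F_{\sakne(T)}}$, giving item (3), and it lies in $\widehat a_{F_{t^\frown j}}$ because the completion was chosen to cover it from $S_t$.

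The heart of the matter, and the step I expect to be the main obstacle, is item (5): the escaping must survive thinning to every $T'<T$. I would guarantee this by arranging that at each node the set of successors $j$ for which $\beta_{t^\frown j}$ genuinely escapes, with the $\beta_{t^\frown j}$ pairwise distinct, lies in $\mathcal D$. Because any $T'<T$ also branches on a set in $\mathcal D$ above its stem and $\mathcal D$ is a filter, these two $\mathcal D$-sets meet infinitely, so some node of $T'$ already exhibits infinitely many escaping successors, which is item (5). What has to be ruled out is the \emph{good} alternative, in which $S_{t^\frown j}\cup F_t\cup F_s$ is already a cover for a $\mathcal D$-large set of $j$ and no witness $\beta_{t^\frown j}$ is available. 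Here I would argue by contradiction through (the version of Lemma \ref{isacover} with a fixed finite set adjoined): were the good alternative maintainable $\mathcal D$-often throughout a subtree, the overhead would not grow across good successors and so would stabilize at a single finite $F$, whence $S_{t'}\cup F$ would be a cover at every node of that subtree and Lemma \ref{isacover} would force $\dot S\cup F$ to be a cover, contradicting the choice of $\dot S$. The delicate point is to make this reflection effective at each node while keeping the $F_t$ nested: one must choose the $\beta_{t^\frown j}$ by a bookkeeping that runs through a dense family of potential uncovered points, so that the failure of escaping along a subtree really does upgrade to coverage there, and one must keep the witnesses pairwise distinct as the tree grows. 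For the latter, Lemma \ref{bigsets} is exactly the tool, since it says that a cover covers each point uncountably often, leaving room to select a fresh $\beta_{t^\frown j}$ at every node.
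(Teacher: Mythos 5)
Your setup (Lemma \ref{allstat} to make every $S_t$ stationary, Lemma \ref{subset} to build nested finite sets $F_t\subset S_{t^-}$ with $S_t\cup F_t\cup F_{\sakne(T)}$ a cover, and the choice of $\beta_{t^\frown j}$ as a point of $\widehat a_{F_{t^\frown j}}$ left uncovered) matches the paper's construction of items (1)--(4). The gap is in your treatment of item (5). You propose to make the conclusion \emph{local}: at every node $t$ the set of successors $j$ for which $\beta_{t^\frown j}$ escapes should lie in $\mathcal D$, with the witnesses pairwise distinct, so that any $T'<T$ inherits an infinite escape set at its stem. This is too strong and cannot be arranged. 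First, nothing rules out a node $t$ at which $S_{t^\frown j}\cup F_t\cup F_{\sakne(T)}$ is already a cover for \emph{every} $j$; at such a node there is no escaping witness at all, and your proposed contradiction (``the overhead stabilizes and Lemma \ref{isacover} applies'') does not fire, because a single level of good successors --- or even $\mathcal D$-many good successors at many nodes --- yields no condition $T_1$ on \emph{all} of whose nodes a \emph{fixed} finite $F$ completes $S_{t'}$ to a cover, which is what Lemma \ref{isacover} needs. Second, even at a ``bad'' node the uncovered set $\widehat a_{F_{t^\frown j}}\setminus\bigl(\widehat a_{F_t}\cup\bigcup\{\widehat a_\gamma:\gamma\in S_{t^\frown j}\}\bigr)$ may be the \emph{same single point} for all $j$ (the sets $F_{t^\frown j}$ and $S_{t^\frown j}$ can coincide across many $j$), so the union in (5) can be a singleton at that node; Lemma \ref{bigsets} is of no help here, since it concerns how often a \emph{covered} point is covered, not the size or variability of the uncovered remainder.

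The paper's proof is structured precisely to avoid needing either of these false local statements. It permits ``dummy'' witnesses $\beta_{t^\frown j}\in\widehat a_{F_t}\setminus\widehat a_{F_{t_0}}$ at good successors (these never contribute to the displayed set, which excludes $\widehat a_{F_t}$), takes $\beta_{t^\frown j}$ to be the \emph{minimal} uncovered point otherwise, and then proves (5) by a global contradiction run separately for each $T'$: if every $H_t$ is finite, cover it by $\widehat a_{I_t}$ with $I_t\subset S_t$ finite, pass to a condition forcing $I_t\subset\dot S$ so that $I_t\subset S_{t^\frown j}$ for the surviving successors; by minimality of $\beta_{t^\frown j}$ this upgrades every such successor to a good one, and a fusion then produces $T_1<_0 T'$ on which the single finite set $F_{t_0}$ completes every $S_{t'}$ to a cover, contradicting ``not an almost cover'' via Lemma \ref{isacover}. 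Your sketch gestures at this upgrade (``bookkeeping through a dense family of potential uncovered points'') but does not supply the mechanism --- the minimality of the witness together with the absorption of $I_t$ into $\dot S$ --- and without it the passage from ``finitely many escapees at each node of $T'$'' to ``$\dot S\cup F$ is forced to be a cover'' is exactly the step that is missing. A smaller omission: the paper first replaces $\dot S$ by $\dot S_1$ so that $S_{t^\frown j}\subseteq S_t$, and uses Lemma \ref{isacover} once more to pass to a node $t_0$ below which $F_{t_0}\cup S_t$ is \emph{never} a cover; your construction keeps neither normalization, and both are used in the endgame.
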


\begin{proof} Fix $T$ forcing that $\dot S$ is stationary and
  that $\dot S$ is not an almost cover. Let $M$ be a countable
  elementary submodel so that there is a $t_0\in T$ with
 $M\cap \omega_1  \in S_{t_0}$. 
We may assume that $t_0 = \sakne(T)$  and, as in Lemma \ref{allstat},
 that $S_t$ is stationary for all $t\in T$. In fact, 
   we can assume that 
$\bigcap \{ S_{t\restriction \ell } : |t_0|\leq   \ell\leq  |t|\}$ 
is stationary for all $t\in T$.
Let $S_{t_0}^- = S_{t_0}$ and,   for each $t_0\subset t\in T$, let
   $S_t^-$ denote the stationary set 
\(\bigcap \{ S_{t\restriction \ell } : |t_0|\leq    \ell \leq
|t|\}\). We show that we may assume that $T_t\Vdash
 \dot S\subset S^-_t$ for all $t\in T$.
Consider the name $\dot S_1$ where
$$\dot S_1 = \{ (\gamma, \tilde T):
\tilde T \leq T,\  \tilde T\Vdash \gamma\in \dot S,
 \gamma \in S_{\sakne(\tilde T)}^-\}~.$$
 It is easily checked that, for all $t\in T$,
 $t\Vdash_w \gamma\in \dot S_1$ if and only if
 $\gamma \in S_t^-$.  Of course it follows, by Lemma \ref{allstat},
 that
  $T$ forces that $\dot S_1$ is a stationary 
subset of $ \dot S$ (and so is not an almost
  cover), that $\dot S_1\in M$, and that if we replace $\dot S$ by
  $\dot S_1$, then we have that $S_t^- = S_t$ for all $t\in T$.

Choose any finite 
$F_{t_0}$ so that $S_{t_0}\cup F_{t_0}$ is  a cover. 
By Lemma \ref{isacover} and by possibly replacing
 $t_0$ by an extension in $T$, 
 we may assume that $F_{t_0}\cup S_t$
is not a cover for all $t_0\subsetneq t\in T$.
 For all
$t_0\subsetneq t^\frown j\in T$,
choose a   finite $F_{t^\frown j}\subset S_t$
 so that $ S_{t^\frown j} \cup F_{t^\frown j} \cup F_{t_0}$ 
 is a   cover (by Lemma \ref{subset}).
 We now choose $\beta_{t^\frown j}$ using the fact
 that $S_{t^\frown j}\cup F_{t_0}$ is not a cover.
 If $S_{t^\frown j}\cup F_t\cup F_{t_0}$ is a cover,
 then choose $\beta_{t^\frown j}\in {\widehat a}_{F_t}
 \setminus
   {\widehat a}_{F_{t_0}}$. 
Otherwise,
let $\beta_{t^\frown j}$ be the minimal element
of ${\widehat a}_{F_{t^\frown j}}
\setminus \bigcup \{ {\widehat a}_\gamma : \gamma \in S_{t^\frown
  j}\cup F_t\cup F_{t_0}\}$. Note that $\beta_{t^\frown j}\notin {\widehat
  a}_{F_{\sakne(T)}}$.

   Choose any $T' < T$ and suppose that
$$H_t = 
\bigcup_{t^\frown j\in T'}
\{\beta_{t^\frown j} \}\setminus  
\left({\widehat a}_{F_{t}}\cup \bigcup \{ {\widehat a}_\gamma :
    \gamma\in S_{t^\frown j}\}\right)$$
 is finite for all $t\in T'$.
 We proceed to a contradiction. 
 Since each $F_{t^\frown j}\subset S_t$, there is a finite
 set $I_t\subset S_t$ such that $H_t \subset {\widehat a}_{I_t}$.
Since $I_t\subset S_t$, there is a $\tilde T<_0 T'_t$ forcing
that $I_t\subset \dot S$. Notice then that $I_t\subset S_{t^\frown j}$
for all $t^\frown j\in \tilde T$. It then follows that
$S_{t^\frown j}\cup F_t\cup F_{t_0}$ is a cover for all
 $t^\frown j\in \tilde T$.  But now it follows, by a fusion,
that  there is 
 $T_1 <_0 T'$ satisfying that  $F_{t_0}\cup S_t$ is a cover
for all $t\in T_1$. By Lemma \ref{isacover}, we have
that such a $T_1$ forces that $F_{t_0}\cup\dot S$ is a cover,
contradicting that $T$ forces that $\dot S$ is not an almost cover.
\end{proof}

\begin{theorem} [$\diamondsuit$]
  There is an ultrafilter $\mathcal D$ so that\label{justone} 
  $\mathbb L(\mathcal D)$ preserves that $\mathcal A$ has the ScP.
\end{theorem}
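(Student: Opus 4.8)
The plan is to argue by contradiction, using Lemma~\ref{thirteen} as a template for the only way the ScP can be destroyed, and to build $\mathcal D$ by a $\diamondsuit$-guided recursion of length $\omega_1$ that forbids that template. Since $\diamondsuit$ gives $\mathfrak c=\aleph_1$, I would build $\mathcal D=\bigcup_{\alpha<\omega_1}\mathcal D_\alpha$ as an increasing union of filters $\mathcal D_\alpha$, each with a countable base, with unions taken at limits and with a bookkeeping that, for every $X\subseteq\omega$, eventually decides $X\in\mathcal D$ or $\omega\setminus X\in\mathcal D$, so that $\mathcal D$ is a free ultrafilter. Because every condition of $\mathbb L(\mathcal D)$ is a countable tree, its branching sets all appear in a single $\mathcal D_\alpha$; hence $\mathbb L(\mathcal D)=\bigcup_\alpha\mathbb L(\mathcal D_\alpha)$, and the weak-forcing sets $S_t$ attached to a guessed name are computed correctly inside the countable elementary submodels used below.

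The $\diamondsuit$-sequence is used to anticipate a code for a putative bad structure: a pair $(\dot S,T)$ together with the assignment $t\mapsto(F_t,\beta_t)$ furnished by Lemma~\ref{thirteen}. Suppose, toward a contradiction, that $\mathbb L(\mathcal D)$ forces $\mathcal A$ to fail the ScP, and fix the bad structure with properties (1)--(5). Since $S_{\sakne(T)}$ is stationary by (1), a reflection argument combined with $\diamondsuit$ yields a stationary set of $\delta$ at which the guess is correct, $\delta=M\cap\omega_1$ for a countable elementary $M$ containing $(\dot S,T,\ldots)$ and the construction up to $\delta$, with $T\in\mathbb L(\mathcal D_\delta)$ and $\delta\in S_{\sakne(T)}$. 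The point of the recursion is that at such a stage $\delta$ I will have placed into $\mathcal D$, for every node $t$ of $T$, a branching set $B^*_t\subseteq\{j:t^\frown j\in T\}$ such that $\beta_{t^\frown j}\in\widehat a_\delta$ for all but finitely many $j\in B^*_t$.

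Granting that, the contradiction is quick. Using $\delta\in S_{\sakne(T)}$ I would choose $T'<_0T$ forcing $\delta\in\dot S$, exactly as in Lemma~\ref{allstat}; then $\delta\in S_t$ for every $t\in T'$, so $\widehat a_\delta\subseteq\bigcup\{\widehat a_\gamma:\gamma\in S_{t^\frown j}\}$ at every node. Thinning the branching of $T'$ at each $t$ down to $B^*_t$ (a legal move, since $B^*_t\in\mathcal D$) leaves, for each $t$, only finitely many successors $t^\frown j$ whose witness $\beta_{t^\frown j}$ lies outside $\widehat a_\delta$, and all the rest are covered. Thus for every $t\in T'$ the set displayed in clause (5) is finite, directly contradicting (5). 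Hence no bad structure exists and $\mathbb L(\mathcal D)$ preserves the ScP.

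The hard part is the recursive commitment underlying the previous paragraph: arranging, at stage $\delta$, that the sets $\{j:\beta_{t^\frown j}\in\widehat a_\delta\}$ are simultaneously $\mathcal D_\delta$-positive over all nodes $t$ of $T$, so that together with $\mathcal D_\delta$ they generate a proper filter that can still be completed to an ultrafilter. This is where the ScP is indispensable. By property (3) and the choices in Lemma~\ref{thirteen} the $\beta_{t^\frown j}$ avoid $\widehat a_{F_{\sakne(T)}}$, so Lemma~\ref{bigsets} applied to the cover $S_{\sakne(T)}\cup F_{\sakne(T)}$ shows each such $\beta$ lies in uncountably many $\widehat a_\gamma$ with $\gamma\in S_{\sakne(T)}$; countable compactness of $\tau(\mathcal A)$ (Lemma~\ref{needScP}) forces the $\beta$'s selected along a $\mathcal D_\delta$-set to cluster, and a further pressing-down selects $\delta$ so that the clustering point is $\delta$ itself and its compact neighborhood $\widehat a_\delta$ absorbs a $\mathcal D_\delta$-positive set of them at every node at once. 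Carrying out this node-by-node fusion so that it is absorbed into the single countably based filter $\mathcal D_{\delta+1}$, while not disturbing the bookkeeping that makes $\mathcal D$ an ultrafilter, is the delicate core of the proof and the step I expect to be the main obstacle.
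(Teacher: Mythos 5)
Your overall architecture---a $\diamondsuit$-guided recursion of length $\omega_1$ building $\mathcal D$ as an increasing union of countably based filters, guessing a putative bad structure $(\dot S,T,\{S_t,F_t,\beta_t\})$ as in Lemma~\ref{thirteen}, diagonalizing at the guessing stage $\delta$ by committing suitable branching sets to $\mathcal D$, and then refuting clause (5)---is exactly the paper's. The gap is in the diagonalization target. You want, at stage $\delta$, sets $B^*_t\in\mathcal D$ with $\beta_{t^\frown j}\in\widehat a_\delta$ for almost all $j\in B^*_t$, so that the single ordinal $\delta\in S_{\sakne(T)}$, forced into $\dot S$ as in Lemma~\ref{allstat}, covers everything. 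But $\delta$ is handed to you by the $\diamondsuit$-sequence, $\widehat a_\delta$ is determined by $\mathcal A$, and the cluster points of $\{\beta_{t^\frown j}: t^\frown j\in T\}$ are determined by the guessed data; nothing makes them meet $\widehat a_\delta$. Concretely, all the $\beta_{t^\frown j}$ above a node $t$ could lie in a single compact set $\widehat a_{\gamma_0}$ disjoint from $\widehat a_\delta$ (with $\gamma_0\notin F_t\cup F_{\sakne(T)}$), in which case no thinning $B^*_t$ yields a subsequence absorbed by $\widehat a_\delta$. Your proposed repair---``a further pressing-down selects $\delta$ so that the clustering point is $\delta$ itself''---does not close this: the cluster point $\alpha_t$ is a function of the data at $\delta$, and for a fixed $\alpha$ the set $\{\gamma:\alpha\in\widehat a_\gamma\}$ need not be stationary (the ScP only guarantees $\alpha\in\widehat a_F\cup\bigcup_{\gamma\in S}\widehat a_\gamma$, and $\alpha$ may always be caught by the finite part $\widehat a_F$), so you cannot re-select $\delta$ within the stationary set of correct guesses to make $\widehat a_\delta$ catch the countably many points $\alpha_t$ simultaneously. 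Lemma~\ref{bigsets} gives you that each $\beta$ lies in \emph{uncountably} many $\widehat a_\gamma$ with $\gamma\in S_{\sakne(T)}$, but uncountable is not stationary, and the required quantifier exchange (one $\gamma$, or one $\delta$, absorbing a tail of the $\beta$'s) does not follow.

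The missing idea, which is how the paper closes exactly this step: do not aim at $\delta$ at all. At stage $\delta$, thin each branching set only so that the relevant $\beta$'s \emph{converge to some point} $\alpha_{t}$, wherever it may be (sequential compactness follows from the ScP via Lemma~\ref{needScP} together with countable character of all points). In the verification, invoke clause (2) of Lemma~\ref{thirteen}: $S_t\cup F_t\cup F_{\sakne(T)}$ is a cover, so some $\gamma_t$ in it satisfies $\alpha_t\in\widehat a_{\gamma_t}$, and hence $\widehat a_{\gamma_t}$ absorbs almost all of the $\beta_{t^\frown j}$. Clause (3) and the fact that the surviving $\beta$'s avoid $\widehat a_{F_t}\cup\widehat a_{F_{\sakne(T)}}$ then force $\gamma_t\in S_t$, i.e.\ $t\Vdash_w\gamma_t\in\dot S$; a fusion puts $\gamma_t$ into $S_{t^\frown j}$ for all successors in a single $T'<_0 T$, and clause (5) fails. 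In short, the covering ordinal must be extracted from the cover $S_t\cup F_t\cup F_{\sakne(T)}$ at the actual limit point of the $\beta$'s, not from reflection of $\dot S$ at the diamond stage.
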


\begin{proof}
Let $\{ Y_\xi : \xi \in \omega_1\}$ be any enumeration of
 $\mathcal P(\omega)$. 
Using standard coding techniques, we may assume we have a sequence
 $\langle \mathcal M_\delta : \omega\leq \delta\in \omega_1\rangle$ with the
 property that, for each $\omega\leq \delta<\omega_1$
 $$\mathcal M_\delta = \langle \langle D(\delta,\alpha)
 : \alpha <
  \delta\rangle , \langle S(\delta,t), F(\delta,t),\beta(\delta,t) :
  t\in \omega^{<\omega}\rangle $$
where each $D(\delta,\alpha)\subset \omega$ and each
   $S(\delta,t)\cup F(\delta,t)\cup\{\beta(\delta,t)\}\subset
   \delta$, 
and satisfying that for each ultrafilter $\mathcal D$ on $\omega$ and
each sequence $\{ S_t , F_t ,  \{\beta_t\} : t\in \omega^{<\omega}\}$ of
subsets of $\omega_1$, there is a stationary set of $\delta$
satisfying that 
\begin{enumerate}
\item  $\mathcal D\supset
           \{ D(\delta,\alpha) : \alpha <\delta\}$,
\item $(S_t\cap \delta ,F_t\cap \delta , \beta_t)= 
       (S(\delta,t), F(\delta,t), \beta(\delta,t))$ for all 
  $t\in \omega^{<\omega}$.
\end{enumerate}

We define a mod finite decreasing
sequence $\langle D_\xi : \xi\in\omega_1\rangle$ by
induction on $\xi\in\omega_1$. For $n<\omega$, 
$D_n = \omega\setminus n$ and choose any infinite
$D_\omega$ so that,  for each $n<\omega$,
  $D_\omega$ is mod finite
 contained in one of $\{Y_n,\omega\setminus
  Y_n\}$. We similarly
  require that for each limit $\delta$ and $n\in\omega$,
 $Y_{\delta+\omega}$ is
  mod finite contained in one of $\{ Y_{\delta+n}, \omega\setminus
   Y_{\delta+n}\}$. These inductive assumptions, if successfully
   completed, ensure that the filter generated by
   $\langle D_\xi : \xi\in\omega_1\rangle$ is an ultrafilter.

   Let $C$ be any cub satisfying that for each $\delta\in C$
   there is a countable elementary submodel $M$ of $H(\omega_2)$
   satisfying that $M\cap \omega_1 = \delta$. Note that each
   $\delta\in C$ is a limit of limit ordinals.

Our induction will proceed along limit ordinals in that for each limit
ordinal $\delta$ we will define the sequence
 $\langle D_\beta : \delta\leq \beta < \delta+\omega\rangle$. If
 $\delta\notin C$ and we have defined the mod finite descending
 sequence
 $\langle D_\xi : \xi < \delta\rangle$, then choose
an infinite  pseudointersection
$D_\delta$ so that $D_\delta$ is also mod finite contained in one
of $\{ Y_\xi , \omega\setminus Y_\xi\}$ for all $\xi<\delta$. For each
$\delta<\beta < \delta+\omega$, we simply set $D_\beta = D_\delta$.

Now assume that $\delta\in C$. Choose, if possible, a countable
elementary submodel $M\prec H(\omega_2)$ so that
\begin{enumerate}
\item $M\cap \omega_1 = \delta$ and $\{ Y_\xi : \xi\in\omega_1\}\in
  M$, 
\item there is an ultrafilter $\mathcal D\in M$ such that
  $\mathcal D \cap M = \langle D_\beta : \beta\in \delta\rangle$
\item there is an $\mathbb L(\mathcal D)$-name $\dot S$ in $M$
  and a $T\in \mathbb L(\mathcal D)\cap M$ forcing that
  $\mathcal A$ does not have the ScP,
\item there is a family $\{ S_t, F_t, \beta_t : t\in T\} \in M$ as in 
  Lemma \ref{thirteen} satisfying that
    \begin{enumerate}
  \item $(S(\delta,t), F(\delta,t), \beta(\delta,t)) =
    (\emptyset,\emptyset,0)$
    for all $t$ such that $t\notin T$
    or $t\subsetneq 
     \sakne(T)$,
   \item for all $t\in T$, $(S_t\cap \delta, F_t,\beta_t)
      = (S(\delta,t), F(\delta,t), \beta(\delta,t))$.
  \end{enumerate}
\end{enumerate}
For definiteness, we may assume we have a well-ordering of
$H(\aleph_2)$ and that $M_\delta$ is chosen to be the minimal (in
this well-ordering) such countable elementary submodel.
Let $D_\delta$ be any infinite pseudointersection
of $\langle D_\xi : \xi<\delta\rangle$ and let
$\{ t_n : n\in \omega\}$ be an enumeration of
$\{ t : \sakne(T)\subseteq t \in T\}$.  By induction on
$n\in\omega$ choose an infinite $D_{\delta+n+1}\subset D_{\delta+n}$
and an $\alpha_{t_n}\in\omega_1$
so that if
$\bigcup_{j\in D_{\delta+n+1}} \{\beta(\delta,t_n^\frown j)\}
\setminus \left( {\widehat a}_{F(\delta,t_n)}\cup
              \bigcup\{{\widehat a}_\gamma : \gamma\in
              S(\delta,t_n^\frown j)\}\right)$ is infinite
            then it converges to 
            some $\alpha_{t_n}$.
            This completes the construction of the ultrafilter
            $\mathcal D$.

            Now assume that $T\in \mathbb L(\mathcal D)$.
            We prove that $T$ does not force
            that $\mathcal A$ fails  the ScP. 
            We consider an  $\mathbb L(\mathcal D)$-name,
            $\dot S$ such that $T$ forces that $\dot S$ is a stationary subset
            of $\omega_1$ and such that
            $\{  \{\beta_t\} \cup F_t : t\in T
            \}\subset[\omega_1]^{<\aleph_0}$  satisfy
            properties (1)-(4) in the
            statement of Lemma \ref{thirteen}.
            We prove that property (5) fails.
            For any $t\in \omega^{<\omega}\setminus T$
             and $t\subsetneq \sakne(T)$,
            set $\beta_t=0$ and $S_t = F_t =\emptyset$. 
We may  choose $\delta\in \omega_1$ so
            that $\mathcal D\supset\{D(\delta,\alpha) : \alpha\in \delta\}$
            and 
$(S_t\cap \delta ,F_t\cap \delta , \beta_t)= 
       (S(\delta,t), F(\delta,t), \beta(\delta,t))$ for all 
  $t\in \omega^{<\omega}$. Let $T_1\in \mathbb L(\mathcal D)$ be any
            condition satisfying that $T_1\subset T$ and
            that for all $t\in T_1$,
            the set $\{ j\in \omega : t^\frown j\}$ is a subset
            of $D_{\delta+\omega}$.
            For each $t\in T_1$, let $\alpha_t$ denote the ordinal
            chosen for $t$ in the stages $\delta$ to $\delta+\omega$
            of
            the construction.
            Let $L$ be the set of $t\in T_1$ satisfying that there is a
             $\gamma_t\in S_t$ with $\alpha_t\in
            \widehat{a}_{\gamma_t}$. We recall that for $t\in L$,
            $t \Vdash_w \gamma_t\in \dot S$. By induction on levels,
            we can now perform a simple fusion to choose
            $T'\leq T_1$ so that for all $t\in T'\cap L$,
            $(T')_t \Vdash \gamma_t\in \dot S$. It then follows
            that, for all $t\in L\cap T'$ and $t^\frown j\in T'$,
 $\gamma_t\in
            S_{t^\frown j}$. 
Now, choose any $\sakne(T')\subseteq
            t\in T'$.         
Let $J= \{ j : t^\frown j\in T'\ \mbox{and}\ 
\beta(\delta,t^\frown j) \notin \left( 
{\widehat a}_{F(\delta,t)}\cup
              \bigcup\{{\widehat a}_\gamma : \gamma\in
              S(\delta,t^\frown j)\}\right)\}$.
              Property              
 (5) will fail if we prove that
$\{ \beta_{t^\frown j} : j\in J\}$ is finite. So we assume that it
is infinite and obtain a contradiction.
Since $S_t\cup F_t\cup F_{\sakne(T)}$ is a cover,
            we may choose $\gamma\in
            S_t\cup F_t\cup F_{\sakne(T)}$ so that
            $\alpha_t\in \widehat{a}_{\gamma}$. Since
            all but finitely many members of
 the set
             $\{ \beta_{t^\frown j} : j\in J\}$
are in $\widehat{a}_{\alpha_t}\cap \widehat{a}_{\gamma}$
and, by (3) of Lemma \ref{thirteen} and the definition of $J$,
 $\widehat{a}_{F_t}\cup \widehat{a}_{F_{\sakne(T)}}$ is
            disjoint from $\{ \beta_{t^\frown j} : j\in
            D_{\delta+n+1}\}$, so it follows
            that $\gamma \in S_t$ and therefore
            that $t\in L$.  However
            this is the contradiction we seek since
            we can assume that $\gamma$ was chosen to be
            $\gamma_t$, and with
            $\gamma_t\in S_{t^\frown j}$ for all $t^\frown j\in T'$,
            $\{ \beta_{t^\frown j} :  j\in J\}$ is disjoint
            from $\widehat{a}_{\gamma_t}$.              
\end{proof}

\section{Basic properties of $\mathbb T$-algebras}

In the previous section we explored how to produce and preserve the
ScP property for a single proper coherent sequence. In this section we
use the concept of $\mathbb T$-algebras to do so for a large system of
proper coherent sequences. These were introduced in \cite{Kosz1}
and utilized in \cite{Roberto, ShelahEfimov} for the purposes of
constructing Efimov spaces.

 Say that $\mathcal A_{\Gamma}$ has the ScP 
    if $\lambda\leq\omega_1$ and $\mathcal A_{\Gamma,x}$ has
    the ScP for all $x\in 2^{\omega_1}$. 

Let us note that the set $X(\Gamma)$ may grow,
 e.g. $\dot X(\Gamma)$ would be a name of all branches,
 if we enlarge
 the model. It may also be useful to note that for $x\in X(\Gamma)$
 (or $x\in \check X(\Gamma)$)
in
the ground model, $(\lambda_x{+}1, \tau(\mathcal A_{\Gamma,x}))$ will
be
unchanged and so 
$\tau(\mathcal A_\Gamma)$ will induce the subspace
topology on the elements of $\check X(\Gamma)$.
Before proving this next result we recall
S. Koppelberg's result \cite{Koppelberg}
that adding a
Cohen real ensures that 
every Stone space of a ground model infinite Boolean
algebra will have a non-trivial converging sequence.
  It is also well-known (as shown in \cite{KunenTall}*{Theorem 9}) 
that a $\sigma$-centered forcing will not add any new
uncountable branches to $\Gamma$ and so $\lambda_x<\omega_1$
for all new $x\in X(\Gamma)$.

\begin{lemma} If $\mathcal A_\Gamma$ is a $\mathbb T$-algebra
  with the ScP and if $G$ is $\mathop{Fn}(\omega,2)$ generic,
  then in $V[G]$,\label{seqcpt}
  $(X(\Gamma), \tau(\mathcal A_\Gamma))$ is sequentially compact.
\end{lemma}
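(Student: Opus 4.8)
The goal is to show that in $V[G]$, where $G$ is $\Fn(\omega,2)$-generic (Cohen), every infinite sequence from $X(\Gamma)$ has a convergent subsequence. The plan is to exploit Koppelberg's theorem (cited just before the statement) that adding a Cohen real produces a nontrivial convergent sequence in the Stone space of any ground-model infinite Boolean algebra, combined with the structural result in Proposition~\ref{18} that convergent $\omega$-sequences in $(X(\Gamma),\tau(\mathcal A_\Gamma))$ must converge to a point of $X(\Gamma,\aleph_0)$, provided we know $\mathcal A_\Gamma$ retains the ScP. The first step is therefore to observe that $\Fn(\omega,2)$ is $\sigma$-centered (indeed ccc), so by Lemma~\ref{lemma7} it preserves the ScP of each $\mathcal A_{\Gamma,x}$, and hence $\mathcal A_\Gamma$ still has the ScP in $V[G]$. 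This is what licenses the use of Proposition~\ref{18} in the extension.

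\smallskip
\textbf{Reducing to a ground-model sequence.} Given an infinite sequence $\langle x_n : n\in\omega\rangle$ in $X(\Gamma)$ lying in $V[G]$, the second step is to pass to the Stone space of $\mathcal B_\Gamma$ as computed in $V$. Since $\Fn(\omega,2)$ is $\sigma$-centered it adds no new uncountable branches to $\Gamma$ (the remark preceding the lemma, via \cite{KunenTall}), so every new element of $X(\Gamma)$ is countable, i.e. lies in $X(\Gamma,\aleph_0)$. The key move is that $(X(\Gamma),\tau(\mathcal A_\Gamma))$ is (a dense subset of, or can be identified with points of) the Stone space of the ground-model Boolean algebra $\mathcal B_\Gamma$, via Lemma (the one labeled) on ultrafilters $\mathcal U_y$: each $y\in X(\Gamma)$ corresponds to the ultrafilter $\mathcal U_y$ on $\mathcal B_\Gamma$. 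Thus a sequence of points of $X(\Gamma)$ is a sequence of ultrafilters on a ground-model infinite Boolean algebra, and Koppelberg's theorem guarantees that \emph{some} infinite subsequence converges in the Stone space $\mathrm{St}(\mathcal B_\Gamma)$.

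\smallskip
\textbf{Transferring convergence back into $X(\Gamma)$.} The remaining work is to check that the limit point supplied by Koppelberg's theorem actually lies in $X(\Gamma)$ (equivalently, is one of the ultrafilters $\mathcal U_y$) rather than being an extraneous point of the full Stone space, and that convergence in $\mathrm{St}(\mathcal B_\Gamma)$ matches convergence in the topology $\tau(\mathcal A_\Gamma)$. By the lemma on ultrafilters, \emph{every} ultrafilter on $\mathcal B_\Gamma$ is of the form $\mathcal U_y$ for some $y\in X(\Gamma)$, so the Koppelberg limit is automatically a point of $X(\Gamma)$; and since $\tau(\mathcal A_\Gamma)$ is precisely the Stone topology restricted to these points, the subsequence converges in $(X(\Gamma),\tau(\mathcal A_\Gamma))$. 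Finally, Proposition~\ref{18} (applicable because the ScP is preserved) pins the limit down to $X(\Gamma,\aleph_0)$, giving a genuine convergent subsequence and hence sequential compactness.

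\smallskip
\textbf{Main obstacle.} I expect the delicate point to be the matching of topologies and the identification of limits: one must verify that Koppelberg's generically-added convergent sequence can be taken to consist of (or refined to) points already in $X(\Gamma)$ and that its limit is again such a point, not merely an arbitrary ultrafilter that might fail to be realized as some $\mathcal U_y$. Here the earlier lemma guaranteeing that \emph{every} ultrafilter of $\mathcal B_\Gamma$ equals some $\mathcal U_y$ is exactly what removes this obstacle, so the argument hinges on applying that lemma in $V[G]$ — which is legitimate precisely because no new uncountable branches appear and the relevant Ostaszewski subspaces $(\lambda_x{+}1,\tau(\mathcal A_{\Gamma,x}))$ are unchanged by the $\sigma$-centered forcing.
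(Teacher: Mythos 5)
There is a genuine gap at the central step. Koppelberg's theorem is an \emph{existence} statement: after adding a Cohen real, the Stone space of a ground-model infinite Boolean algebra contains \emph{some} non-trivial convergent sequence. It does not say that an arbitrary \emph{given} sequence of ultrafilters has a convergent subsequence --- that would be sequential compactness of the Stone space, which is precisely what the lemma is trying to establish. Your sentence ``Koppelberg's theorem guarantees that some infinite subsequence converges'' is therefore a misapplication: the convergent sequence Koppelberg supplies need not have anything to do with the sequence $\langle x_n : n\in\omega\rangle$ you started with (and that sequence lives in $V[G]$, not in $V$, so even its closure need not be the Stone space of a ground-model algebra). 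The surrounding observations --- preservation of the ScP by Lemma \ref{lemma7}, no new uncountable branches, every ultrafilter being some $\mathcal U_y$ --- are all correct but do not close this gap.

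The paper's proof routes around this by arguing by contradiction on the set of \emph{limit points}. Let $K$ be the closed set of limit points of the given infinite (discrete) set. If $K$ contains a point of countable character, a convergent subsequence can be extracted and we are done; by Proposition \ref{18} this covers all of $X(\Gamma,\aleph_0)$, so one may assume $K\subset X(\Gamma)\setminus X(\Gamma,\aleph_0)$, hence (no new uncountable branches) $K\subset\check X(\Gamma)$, and $K$ is uncountable. Choosing names $\dot x_\alpha$ for $\omega_1$ distinct elements of $K$ and a condition $p\in G$ deciding uncountably many of them, one obtains an uncountable \emph{ground-model} set $K_p\subset K$; the closure of $K_p$ is the Stone space of a ground-model infinite Boolean algebra, so Koppelberg's theorem legitimately applies \emph{to that closure}, producing a convergent sequence inside $K$. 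Since the ScP is retained, Proposition \ref{18} forces its limit to lie in $X(\Gamma,\aleph_0)$, contradicting $K\cap X(\Gamma,\aleph_0)=\emptyset$. Note that this argument never needs the Koppelberg sequence to be a subsequence of the original one --- only that it lives inside the closed set $K$ --- which is exactly the flexibility your direct approach lacks.
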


\begin{proof}
  Let $\dot K$ be the $\mathop{Fn}(\omega,2)$ name of
  the closed set of limit points  of an
  infinite discrete subset of $\dot X(\Gamma)$.   
  Clearly the infinite set has a converging subsequence if $\dot K$
  has any points of countable character.
 So   we  may assume
  that  $\dot K$  is an uncountable subset of
   $ \dot{X}(\Gamma)\setminus \dot X(\Gamma,\aleph_0)$,
which is contained in $\check X(\Gamma)$.  By recursion on 
 $\alpha\in
\omega_1$, choose a nice $\mathop{Fn}(\omega,2)$
name $\dot x_\alpha$ for an element of $\dot K\setminus\{\dot x_\beta
: \beta < \alpha\}$. Fix any $p\in G$ satisfying that
$K_p = \{ \alpha \in \omega_1 : (\exists x_\alpha)~~ p\Vdash \dot
x_\alpha = x_\alpha\}$ is uncountable. By Koppelberg's result,
the closure of $K_p\subset \dot K$ 
in $\dot X(\Gamma)$ will contain an infinite
converging sequence. Since $\mathcal A_\Gamma$ retains the ScP,
it follows from Proposition \ref{18} that
this sequence must converge to a new point $x$. However
this is a contradiction since we assumed $\dot K $ is
contained in $\check X(\Gamma)$.
\end{proof}

\section{Preserving the ScP for $\mathbb T$-algebras} 

\begin{theorem}[$\diamondsuit$]
  If $\mathcal A_\Gamma$ is a $\mathbb T$-algebra with the ScP,
  then\label{Dpreserve}
   in the forcing extension by $\mathop{Fn}(\omega_1,2)$,
  there is an ultrafilter $\mathcal D$ satisfying
  that $\mathbb L(\mathcal D)$ preserves that $\mathcal A_{\Gamma}$
  has the ScP.
\end{theorem}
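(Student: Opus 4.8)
The plan is to carry out the $\diamondsuit$-construction of Theorem~\ref{justone} inside the extension $V[G]$ by $\Fn(\omega_1,2)$, but now building a single ultrafilter $\mathcal D$ that simultaneously defeats every potential failure of the ScP along every branch of $\Gamma$. First I would record the three features of $V[G]$ that make this work. Since $V\models\diamondsuit$ (so CH holds and is preserved) and $\Fn(\omega_1,2)$ is ccc of size $\aleph_1$, $\diamondsuit$ continues to hold in $V[G]$: a ground-model $\diamondsuit$-sequence guesses the nice $\Fn(\omega_1,2)$-names for subsets of $\omega_1$, and on a club of $\delta$ closed under the (countable) support maps of the deciding antichains, the guessed name restricts to a name for $A\cap\delta$. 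Next, $\Fn(\omega_1,2)$ is $\sigma$-centered (via an independent family it is a countable union of centered pieces), so it adds no new uncountable branches to $\Gamma$; every new element of $X(\Gamma)$ lies in $X(\Gamma,\aleph_0)$ and contributes only trivially to the ScP. Writing $\Fn(\omega_1,2)$ as the finite-support product of its Cohen factors and applying Lemma~\ref{lemma7} at $\lambda=\omega_1$ (each countable initial segment is Cohen forcing, which preserves the ScP of every $\mathcal A_{\Gamma,x}$) then shows that $\mathcal A_\Gamma$ still has the ScP in $V[G]$. Finally, factoring $\Fn(\omega_1,2)\cong\Fn(\omega_1,2)\times\Fn(\omega,2)$ and applying Lemma~\ref{seqcpt} over the intermediate model gives that $(X(\Gamma),\tau(\mathcal A_\Gamma))$ is sequentially compact in $V[G]$. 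This last point is the real reason for forcing with $\Fn(\omega_1,2)$: the convergence step below needs genuine convergent subsequences, which countable compactness alone does not supply.

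With these in hand I would repeat the recursion of Theorem~\ref{justone} in $V[G]$, constructing a mod-finite decreasing sequence $\langle D_\xi:\xi\in\omega_1\rangle$ generating $\mathcal D$. Because $\mathcal A_\Gamma$ has the ScP exactly when each $\mathcal A_{\Gamma,x}$ does, any failure to preserve the ScP of $\mathcal A_\Gamma$ is a failure along some uncountable branch $x\in\check X(\Gamma)$, witnessed by an $\mathbb L(\mathcal D)$-name $\dot S$, a tree $T$, and a family $\{S_t,F_t,\beta_t:t\in T\}$ as in Lemma~\ref{thirteen} computed with the branch-relative $\widehat a^x$. The $\diamondsuit$-sequence now anticipates, at a stationary set of stages $\delta$, such a quadruple together with the branch datum $x\restriction\delta$; since all data visible below $\delta$ depend only on the countable initial segment $x\restriction\delta$, ordinary $\diamondsuit$ on $\omega_1$ suffices to guess it. At such a stage I would kill the guessed counterexample as before: the ordinals $\beta(\delta,t_n{}^\frown j)$ name, via $\varphi_x^{\gets}$, points of $X(\Gamma)$, and using sequential compactness I thin $D_{\delta+n}$ to an infinite $D_{\delta+n+1}$ along which the corresponding sequence of points converges in $(X(\Gamma),\tau(\mathcal A_\Gamma))$ to a single limit point $z_{t_n}$, which I record.

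The decisive point---and the step I expect to be the main obstacle---is that one thinning serves every branch at once. By Lemma~\ref{sixteen} each projection $\varphi_x$ is continuous, so convergence of the chosen sequence in $X(\Gamma)$ to $z_{t_n}$ forces convergence of its $\varphi_x$-image in each Ostaszewski space $(\lambda_x+1,\tau(\mathcal A_{\Gamma,x}))$ to the ordinal $\alpha_{t_n}=\varphi_x(z_{t_n})$. Thus the single recorded limit supplies, for every branch $x$ at once, exactly the convergence that the verification of Theorem~\ref{justone} needs in order to contradict clause~(5) of Lemma~\ref{thirteen}. Running that verification branch-by-branch with the branch-relative $\widehat a^x$ then shows that for every uncountable branch $x$ and every $T\in\mathbb L(\mathcal D)$, $T$ cannot force that $\mathcal A_{\Gamma,x}$ loses the ScP; as the countable branches are trivial, $\mathbb L(\mathcal D)$ preserves that $\mathcal A_\Gamma$ has the ScP. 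The delicate bookkeeping is precisely this reconciliation of a single recorded limit point in $X(\Gamma)$ with its branch-dependent projections, so that one ultrafilter $\mathcal D$, built to address one guessed counterexample per stage, defeats all genuine failures along all branches through the stationarity of the guessing.
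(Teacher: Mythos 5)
Your proposal is correct and follows essentially the same route as the paper: a $\diamondsuit$-driven rerun of Theorem~\ref{justone} in which, at each critical stage, the guessed data $(x\restriction\delta,\{S_t,F_t,\beta_t\})$ is defeated by thinning $D_{\delta+n}$ so that the relevant points converge in $X(\Gamma)$ (sequential compactness supplied by Lemma~\ref{seqcpt} over an intermediate Cohen extension), with continuity of $\varphi_x$ transferring that single recorded limit to every branch extending the guessed initial segment --- exactly the paper's mechanism. The only difference is presentational: you run the recursion inside $V[G]$ using the preserved $\diamondsuit$, whereas the paper unrolls the same preservation argument by building nice $\mathop{Fn}(\omega_1,2)$-names in $V$ guessed by a ground-model $\diamondsuit$-sequence.
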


\begin{proof}
  Using a standard coding technique, we can arrange that there is a
  $\diamondsuit$-sequence with the form
  $\vec {\mathcal M} = \{ \mathcal 
M_\delta(i) : i < 5, \delta \in \omega_1\}$ where,
  \begin{enumerate}
  \item $\mathcal M_\delta(0)\in 2^\delta$,
  \item $\mathcal M_\delta(1) = \{ \dot D(\delta,\alpha) : \alpha<\delta\}$
      is a sequence of $\mathop{Fn}(\delta,2)$-names of subsets of
      $\omega$,
      \item $\mathcal M_\delta(2) = \{ \dot S(\delta,t): t\in
        \omega^{<\omega} \}$ and $\mathcal M_\delta(3) =
        \{ \dot F(\delta,t): t\in
        \omega^{<\omega} \}$
are  sequences of
        $\mathop{Fn}(\delta,2)$-names of subsets of $\delta$,
       \item $\mathcal M_\delta(4) = \{ \dot \beta (\delta,t) : t\in
           \omega^{<\omega}\}$
            is a sequence of
            $\mathop{Fn}(\delta,2)$-names of elements  of $\delta$.
      \end{enumerate}
      satisfying that for all $x\in 2^{\omega_1}$,
      sequences
        $\{ \dot D_\alpha :  \alpha\in \omega_1\}$ of nice
        $\mathop{Fn}(\omega_1,2)$ names of subsets of $\omega$,
      all sequences $\{ \dot S_t, \dot F_t : t\in
      \omega^{<\omega}\} $ 
of pairs of nice $\mathop{Fn}(\omega_1,2)$-names  of subsets
$\omega_1$,
and
$\{ \dot \beta_t : t\in \omega^{<\omega}\}$ of nice
$\mathop{Fn}(\omega_1, 2)$-names of ordinals in $\omega_1$,

there is a stationary set of $\delta$ such that

\begin{enumerate}
\item $\mathcal M_\delta(0) = x\restriction\delta$,
\item $\dot D(\delta,\alpha) = \dot D_\alpha$ for $\alpha<\delta$,
\item $\dot S(\delta,t) = \dot S_t$ for $t\in \omega^{<\omega}$,
\item $\dot F(\delta,t) = \dot F_t$ for $t\in \omega^{<\omega}$,
  \item $\dot \beta(\delta,t) = \dot \beta_t$ for $t\in \omega^{<\omega}$.
  \end{enumerate}

  We construct a sequence $\langle \dot D_\alpha : \alpha \in
  \omega_1\rangle $ of nice $\mathop{Fn}(\omega_1,2)$-names of subsets
  of $\omega$ with the intention that $\dot {\mathcal D}$ 
(the filter generated by this sequence) is forced to be an
ultrafilter. Also, that it is forced that $\mathbb L(\mathcal D)$ 
forces that $\mathcal A_{\Gamma}$ has the ScP.

One simple inductive assumption on $\delta<\omega_1$
is that the sequence
$\langle \dot D_\alpha : \alpha < \delta\rangle$ is that 1 forces
that finite intersections are infinite. We may also assume that
each $\dot D_\alpha$ is a nice $\mathop{Fn}(\alpha+\omega,2)$-name.
For $n\in\omega$,
set $\dot D_n$ to be the canonical name for $\omega\setminus n$.
We will emulate the proof of Theorem \ref{justone}. Again, let
$C\subset\omega_1$ be a cub satisfying that for each
$\delta\in C$, there is a countable elementary submodel $M$
of $H(\omega_2)$ satisfying that $M\cap \omega_1=\delta$. Let
$\{ \dot Y_\xi : \xi\in\omega_1\}$ be an enumeration of all
nice $\mathop{Fn}(\omega_1,2)$ names of subsets of $\omega$
enumerated in such a way that $\dot Y_\xi$ is an
 $\mathop{Fn}(\xi,2)$-name. We may
 again inductively 
 assume that for limit $\delta$, $1$ forces, for each $n\in \omega$,
 that
 $    \dot D_{\delta+\omega}$ is mod finite contained in
 one of $\{ \dot Y_{\delta+n} , \omega \setminus \dot
 Y_{\delta+n}\}$. Again, if $\delta$ is a limit not in $C$,
 then $\dot D_{\delta}$ is any nice $\mathop{Fn}(\delta,2)$-name
 of an infinite pseudointersection of the sequence
 $\langle \dot D_\beta : \beta < \delta\rangle$, and for
 $\delta<\beta< \delta+\omega$, $\dot D_\beta = \dot D_\delta$.

 Our main task is accomplished for \textit{critical\/}
values $\delta\in C$. Choose,
 if possible, 
a countable
elementary submodel $M\prec H(\omega_2)$ and
an $x\in X(\Gamma)\cap 2^{\omega_1}$,
again choose the least such pair by some well-ordering,
so that
\begin{enumerate}
\item $M\cap \omega_1 = \delta$, $\{\dot Y_\xi : \xi\in\omega_1\}\in
  M$, and $M_\delta(0)=x\restriction \delta$,
\item there is an
nice $\mathop{Fn}(\omega_1,2)$ name of a
  ultrafilter $\dot {\mathcal D}\in M$ such that 1 forces that
  $\dot{\mathcal D} \cap M \supset \langle \dot 
  D(\delta,\beta) : \beta\in \delta\rangle$\\
(by elementarity,
$\dot {\mathcal D}$ will be unique up to forcing equivalence)
\item there are a nice $\mathop{Fn}(\omega_1,2)$-names
$\dot T$ in $M$ forced to be in $   \mathbb L(\dot{\mathcal D})$ 
and $\dot {\mathcal
    S}$ in $M$ such that 1 forces that $\dot{\mathcal S}$ is a nice  
  $\mathbb L(\dot{\mathcal D})$-name $\dot S$
 that is forced by $\dot T$ to witness that
  $\mathcal A_{\Gamma,x}$ does not have the ScP,
\item there is a family $\{\dot S_t,\dot F_t, \dot \beta_t : t\in
  \dot T\}
  \in M$ that is forced to be
  as in 
  Lemma \ref{thirteen} satisfying that, it is forced for all
  $t\in\omega^{<\omega} $
  \begin{enumerate}
  \item $(\dot S(\delta,t), \dot F(\delta,t), \dot \beta(\delta,t)) =
    (\emptyset,\emptyset,0)$
    if $t\notin \dot T$
    or $t\subsetneq 
     \sakne(\dot T)$,
   \item  $(\dot S_t\cap \delta, \dot F_t,\dot\beta_t)
     = (\dot S(\delta,t), \dot F(\delta,t), \dot \beta(\delta,t)) $
      if  $t\in \dot T$,
    \end{enumerate}
    \item and    
      $\dot S_t = \{ \gamma : t\Vdash_w \gamma\in \dot S\}$ for
        $\sakne(\dot T)\subset t\in \dot T$.
\end{enumerate}
Let $G_\delta$ be a generic filter for $\mathop{Fn}(\delta,2)$.
We complete the construction by working in the forcing extension
  $V[G_\delta]$. For each $\beta <\delta$, fix any $x_\beta\in 
\check X(\Gamma)$ such
that $\varphi_x(x_\beta) = \beta$. 
Let $D_\delta$ be any infinite pseudointersection of
  the sequence $\langle \val_{G}(\dot D_\beta) :\beta
  <\delta\rangle$. Let $\{ t_n : n\in \omega\}$ be an enumeration of
  $\{ t\in \omega^{<\omega} : \sakne(\val_{G}(\dot T))\leq t \in
  \val_G(\dot T)\}$ with $t_0=\sakne(\val_{G}(\dot T))$.
  By removing a finite set, we may assume that
   $\{ t_0^\frown j : j\in D_\delta\}$ is a subset of $\val_{G}(\dot T)$.

  By induction on $n$,
we utilize Lemma \ref{seqcpt} to
  choose an infinite
  $D_{\delta+n+1}$ contained in 
$  \{ j\in D_{\delta+n} : (t_{n+1})^\frown j \in T\}$ so that,
if $$ \bigcup_{j\in D_{\delta+n}} \{\beta(\delta,t_n^\frown j)\}
\setminus \left( {\widehat a}_{F(\delta,t_n)}\cup
              \bigcup\{{\widehat a}_\gamma : \gamma\in
              S(\delta,t_n^\frown j)\}\right)$$
            is infinite,
            then
            $\{ x_\beta : \beta\in B_n\} $ converges
            in $X(\Gamma)$ (to some point of countable character)
            where
            $$B_n =
            \bigcup_{j\in D_{\delta+n+1}} \{\beta(\delta,t_n^\frown j)\}
\setminus \left( {\widehat a}_{F(\delta,t_n)}\cup
              \bigcup\{{\widehat a}_\gamma : \gamma\in
              S(\delta,t_n^\frown j)\}\right).$$
            To finish the construction, simply choose  a sequence
            $\langle \dot D_{\delta+n} : n\in \omega\rangle$
            of nice $\mathop{Fn}(\delta,2)$-names
            that is forced to satisfy the properties of the above
            sequence in $V[G_\delta]$.

            It is time to verify that this works. We have constructed
            the nice name $\dot{\mathcal D}
             = \langle \dot D_\alpha : \alpha \in \omega_1\rangle$ and
             we must verify that 1 forces that
             $\mathbb L(\dot{\mathcal D})$ preserves that
             $\mathcal A_{\Gamma}$ has the ScP. Let
             $G$ be a generic filter for $\mathop{Fn}(\omega_1,2)$
             and,
in $V[G]$,             
             let $\dot S$ be a
             nice $\mathbb L(\val_G(\dot{\mathcal D}))$ name
             of a stationary subset of $\omega_1$ and let
             $x\in \check X(\Gamma)$. Assume that there
             is some $T\in \mathbb L(\val_G(\dot{\mathcal D}))$
             that forces that $\dot S$ is not an almost cover
             for $\mathcal A_{\Gamma,x}$. We may thus assume
             that $T$ has the property that there is a sequence
             $\{\{\beta_t \}\cup F_t , S_t: t\in
             T\}\subset[\omega_1]^{<\aleph_0}$ as in Lemma
             \ref{thirteen}. 
Let $\dot T$ be a nice $\mathop{Fn}(\omega_1,2)$
name for $T$ and similarly let
$\{\dot \beta_t, \dot F_t,\dot S_t : t\in \dot T\}$ be
nice $\mathop{Fn}(\omega_1,2)$ names for the objects as chosen
by Lemma \ref{thirteen}. Choose any $p\in G$ that forces the above
mentioned properties.  Modify if necessary, the above mentioned names
so that if $q\perp p$ is in $\mathop{Fn}(\omega_1,2)$,
$q$ forces that  $\dot \beta_t=0$ and $ \dot F_t, \dot S_t$ are empty.
Similarly, if $q<p$ forces that $t\notin \dot T$
or if $t\subsetneq \sakne(\dot T)$, then $q$ forces that
 $\dot \beta_t=0$ and $ \dot F_t, \dot S_t$ are empty.

Fix any continuous $\in$-chain
 $\{ M^x_\alpha :\alpha\in \omega_1\}$ of countable elementary
 submodels of $H(\aleph_2)$ such that $\{\vec {\mathcal M}, x,p, \{\dot
 Y_\xi: 
 \xi\in\omega_1\}, \dot T,
\{\{\dot \beta_t\}, \dot F_t,\dot S_t : t\in \dot T\}\}
$ is an element of $M^x_0$. Let $C_x$ be a cub such that
$M^x_\gamma\cap \omega_1 = \gamma$ for all $\gamma\in C_x$. Choose
 $\delta\in C_x\cap C$ so that $M^x_\delta$ and these chosen 
names satisfy the requirements of a critical value of $C$.

Let $G_\delta = G\cap \mathop{Fn}(\delta,2)$ and let,
for $n\in \omega$,
            $$B_n =
            \bigcup_{j\in D_{\delta+n+1}} \{\beta(\delta,t_n^\frown j)\}
\setminus \left( {\widehat a}_{F(\delta,t_n)}\cup
              \bigcup\{{\widehat a}_\gamma : \gamma\in
              S(\delta,t_n^\frown j)\}\right)$$
            be as defined in the construction.
            Let $I$ be the set of $n$ such that $B_n$ is infinite,
            and,
for
 each $n\in I$ let
            $y_n\in X(\Gamma)$ be the point such that
             $\{ x_\beta : \beta \in B_n\}$ converges to $y_n$.              
Of course $y_n\neq x$ for all $n$ since no sequence converges to $x$.

We finish our work in $V[G]$. Let $T_\delta <_0 T=\val_G(\dot T)$ be any
extension 
satisfying that for all $t\in T_\delta$,
 the set $\{ j : t^\frown j\in T_\delta\}\subset
 \val_{G}(D_{\delta+\omega})$. Fix any $n$ so that $t=t_n \in T_\delta$
 and let $\alpha = \varphi_x(y_n)$. 
 Since $S_t\cup F_t\cup F_{\sakne(T)}$ is a cover, there is a
 $\gamma_t \in
 S_t\cup F_t\cup F_{\sakne(T)}$ such that $\alpha\in {\widehat
   a}^x_\gamma$ (and $a^x_{\gamma_t}= a_{x\restriction\gamma_t+1}$).
 Since $\{ x_\beta : \beta\in B_n\}$ converges to $y_n$,
 $\{ \beta\in B_n : \varphi_x(x_\beta) = \beta \notin
  {\widehat
    a}^x_\gamma \}$ is finite. By property (3) and (5) of
   Lemma \ref{thirteen}, $B_n$ is disjoint from
   $ {\widehat a}^x_{F_{\sakne(T)}} \cup {\widehat a}^x_{F_t}
   $, hence $\gamma_t\in     S_t$. By definition of
   $S_t$, $t\Vdash_w \gamma_t \in \dot S$. It now follows,
   by a simple fusion, that there is a $T'<_0 T_\delta$
   in $\mathbb L(\mathcal D)$ satisfying that, for all $t = t_n\in T'$
   and $n\in I$, $T'\Vdash \gamma_t \in \dot S$. Furthermore,
   for all $n\in I$ and all $t_n^\frown j\in T'$,
   $\gamma_{t_n} \in S_{t_n^\frown j}$. It now follows
   that the set
$$   
 \bigcup_{t_n^\frown j\in T'} \{\beta(\delta,t_n^\frown j)\}
\setminus \left( {\widehat a}_{F(\delta,t_n)}\cup
              \bigcup\{{\widehat a}_\gamma : \gamma\in
              S(\delta,t_n^\frown j)\}\right)$$
            from (5) of Lemma \ref{thirteen} is a subset of the finite
            set $B_n\setminus {\widehat a}^x_{\gamma_{t_n}}$.
This is our desired contradiction.
\end{proof}

\section{the final model}

We construct some $\mathbb T$-algebras. For $\sigma\in 2^{<\omega}$,
we let $[\sigma] = \{ \tau\in 2^{<\omega} : \sigma\subseteq \tau\}$.
Our first example is a flexible method
of constructing  a $\mathbb T$-algebra on
$\Gamma = 2^{<\omega}$.
Let
 $\{ \sigma_k : k\in \omega\}$ be the standard lexicographic
 enumeration of $ 2^{<\omega}$.

 \begin{proposition} Let  $\pi$
   be any permuation on  $\omega$ and let $L\subset\omega$
   have the property
  that $[\sigma]\cap \{\sigma_k : k\in \pi(L)\}$
 is not empty for all $\sigma\in
  2^{<\omega}$. 
For $\sigma\in 2^{<\omega}$,
  set $a_{\sigma^\frown 0} = \{ k \in\pi(L) : \sigma_k\in
[\sigma^\frown 1]\}$ and
  $a_{\sigma^\frown 1} = \omega\setminus a_{\sigma^\frown 0}$. As
  required,\label{basic} 
   let $a_\emptyset = \emptyset$.
   Then $\{ a_{\sigma} : \sigma \in 2^{<\omega}\}$ is a
     $\mathbb T$-algebra.
\end{proposition}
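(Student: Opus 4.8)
The plan is to verify the four clauses of Definition \ref{fifteen} for $\Gamma = 2^{<\omega}$ with $\lambda = \omega$. Clauses (1) and (2) are automatic, since $2^{<\omega}$ is downward closed and closed under $\dagger$ (flipping the last bit of a finite sequence keeps it in the tree, and $\emptyset^\dagger = \emptyset$). Clause (3) concerns only the root $\emptyset$, the unique node on a non-successor level, and $a_\emptyset = \emptyset$ by stipulation. Clause (4) holds by construction: for $\tau = \sigma^\frown i$ we have $\tau^\dagger = \sigma^\frown(1-i)$ and $a_{\sigma^\frown 1} = \omega \setminus a_{\sigma^\frown 0}$. All the content is in clause (5): for each $x \in 2^{\le\omega}$ the sequence $\mathcal A_{\Gamma,x} = \langle a^x_\alpha = a_{x\restriction\alpha+1} : \alpha < \lambda_x\rangle$ is proper and coherent, and it is here that the hypothesis on $L$ and $\pi$ must enter.

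First I would translate the generators into the language of cones. For a node $\tau$ write $b_\tau = \{k \in \pi(L) : \sigma_k \in [\tau]\}$, so that $a_{\sigma^\frown 0} = b_{\sigma^\frown 1}$. Fixing $x$ and abbreviating $\sigma^\alpha = x\restriction\alpha$, the two relevant sets are $c_\alpha = b_{\sigma^{\alpha+1}}$ (the cone of the branch node) and $d_\alpha = b_{(\sigma^{\alpha+1})^\dagger}$ (the cone of its off-branch sibling). Reading the definition directly gives $a^x_\alpha = d_\alpha$ when $x(\alpha) = 0$, and $a^x_\alpha = \omega\setminus c_\alpha$ when $x(\alpha) = 1$. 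From the partition $[\tau] = \{\tau\}\sqcup[\tau^\frown 0]\sqcup[\tau^\frown 1]$ I would extract the facts that do all the work: the $c_\alpha$ are $\subseteq$-decreasing; $d_\alpha \subseteq b_{\sigma^\alpha} = c_{\alpha-1}$ while $d_\alpha \cap c_\alpha = \emptyset$; the family $\{d_\alpha\}$ is pairwise disjoint (for $\alpha < \alpha'$ one has $d_{\alpha'} \subseteq c_\alpha$, which is disjoint from $d_\alpha$); and, crucially, each $d_\alpha \ne \emptyset$. This last point is exactly the standing hypothesis that $[(\sigma^{\alpha+1})^\dagger]$ meets $\{\sigma_k : k\in\pi(L)\}$.

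For coherence, fix $\alpha \le \beta$; I claim $F = \emptyset$ always suffices, so the task reduces to showing $a^x_\alpha$ and $a^x_\beta$ are disjoint or nested. This is a four-way split on $x(\alpha),x(\beta)$: when both are $1$ the sets are $\omega\setminus c_\alpha \subseteq \omega\setminus c_\beta$ (nested); when both are $0$ they are the disjoint off-branch cones $d_\alpha, d_\beta$; the mixed cases reduce to the already-listed containments $d_\beta \subseteq c_{\beta-1}\subseteq c_\alpha$ and $d_\alpha \subseteq \omega\setminus c_\alpha \subseteq \omega\setminus c_\beta$, giving disjointness and nesting respectively. For properness I would use a single uniform witness, $d_\beta$: it is nonempty and contained in $a^x_\beta$ (equal to it when $x(\beta)=0$; contained in $a^x_\beta = \omega\setminus c_\beta$ when $x(\beta)=1$, since $d_\beta\cap c_\beta=\emptyset$), and it is disjoint from every earlier generator $a^x_\xi$ with $\xi < \beta$ — against $a^x_\xi = d_\xi$ by pairwise disjointness, and against $a^x_\xi = \omega\setminus c_\xi$ because $d_\beta \subseteq c_{\beta-1}\subseteq c_\xi$. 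Hence $a^x_\beta$ is not contained in any finite union of earlier generators.

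The main maneuver, rather than an obstacle, is the translation in the second paragraph: once $a^x_\alpha$ is recognized as either an off-branch cone $d_\alpha$ or a branch-cone complement $\omega\setminus c_\alpha$, the nesting-and-disjointness structure of cones makes both coherence and properness fall out at once, and the role of the hypothesis on $L$ and $\pi$ is pinned down precisely as the nonemptiness of each $d_\alpha$. I would take care only to treat $\alpha = 0$ correctly, reading $c_{-1}$ as $b_\emptyset = \pi(L)$, and to note that all of the above is uniform in whether $x$ is finite or of length $\omega$.
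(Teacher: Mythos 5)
Your proof is correct and follows essentially the same route as the paper's: both arguments hinge on the cone sets (your $b_\tau$ is the paper's $[\tau]_L$) and the key observation that $a_\sigma$ is disjoint from $[\sigma]_L$, from which coherence with $F=\emptyset$ and properness both fall out. The only cosmetic difference is that the paper records that the properness witness $a_\sigma\setminus\bigcup_{j<|\sigma|}a_{\sigma\restriction j}$ is actually \emph{infinite} (immediate, since every cone $b_\tau$ is infinite once the nonemptiness hypothesis is applied to all extensions of $\tau$), which is the form of properness implicitly used later, whereas you only note nonemptiness of $d_\beta$.
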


\begin{proof}
  For each $\sigma\in 2^{<\omega}$, let
  $[\sigma]_L $
  denote the set
  $\{ k \in \pi(L) : \sigma\subseteq \sigma_k\}$. 
  Therefore  $a_{\sigma^\frown 0} = [\sigma^\frown 1]_L$
  and it follows that
 $a_\sigma \cap [\sigma]_L$ is empty for all
  $\sigma\in 2^{<\omega}$. This ensures that $a_{\sigma}\setminus
  \bigcup \{ a_{\sigma\restriction j} : j<|\sigma|
  \}$ is infinite for all $\emptyset \neq \sigma\in 2^{<\omega}$.
  It is also now evident that $a_{\sigma^\frown 0}$ is disjoint from
  $a_{\sigma\restriction j}$ for all $j\leq |\sigma|$.
  This implies that $a_{\sigma^\frown 1}$ contains
    $a_{\sigma\restriction j}$ for all $j\leq |\sigma|$. This
  completes
  the proof.
\end{proof}

We can lift this simple construction to extend any
$\mathbb T$-algebra $\mathcal A_{\Gamma}$ so long as
there are $x\in X(\Gamma)$ with $\lambda_x < \omega_1$.
Fix a sequence $\{ e_\lambda : \omega\leq \lambda\in \omega_1\}$ where,
for each $\lambda\in\omega_1$, $e_\lambda $ is a bijection from
$\omega$ to $\lambda$. For convenience, $\Gamma$ will always
denote some non-empty subtree of 
$ 2^{<\omega_1}$ that is twinned and has no maximal elements.
For definiteness, we assume for the remainder of this section,
that for every $\mathbb T$-algebra $\mathcal A_{\Gamma}$,
the family $\{ a_\sigma : \sigma \in 2^{<\omega}\}$ is obtained as
in Proposition \ref{basic} when $\pi$ is the identity map
and $L =\omega$.

\begin{definition}
Let $\mathcal A_{\Gamma}$ be a $\mathbb T$-algebra\label{nextstep} 
  and let
$x\in X(\Gamma, \aleph_0 )$ and let $\pi$ be a permutation on
$\omega$. We define:
\begin{enumerate}
\item 
  $\Gamma^x = \Gamma \cup \{ x^\frown \sigma : \sigma\in
  2^{<\omega}\}$,
  \item  recursively
    for  $n\in\omega$, let $c^{x}_n = 
  a^x_{e_{\lambda_x}(n)} \setminus \bigcup_{k<n} c^{x}_k$,
\item    $L_x = \{ n : c^{x}_n \neq \emptyset\}$,
\item  $a^{\pi,x}_x = \emptyset$
  and 
$a^{\pi,x}_{\sigma ^\frown 0} = \bigcup\{ c^{x}_k :
     \sigma_{\pi(k)}\in [\sigma^\frown 1]\}$
for $\sigma\in 2^{<\omega}$, 
\item $\mathcal A_\Gamma[\pi,x]$ is the collection
    $\mathcal A_\gamma \cup \{ a^{\pi,x}_\sigma : \sigma\in
    2^{<\omega}\}$,
  \item for all $Y\subset X(\Gamma,\aleph_0)$, $\Gamma^Y
    =\bigcup\{\Gamma^y : y\in Y\}$, and\\
    $\mathcal A_\Gamma[\pi, Y] =
    \bigcup \{ \mathcal A_\Gamma[\pi,y] : y\in Y\}$.
\end{enumerate}
\end{definition}

One could also define $\mathcal A_\Gamma[\langle \pi_y, y: y\in
Y\rangle]$ to equal $\bigcup\{ \mathcal A_\Gamma[\pi_y,y] : y\in Y\}$
but we will not need this.   

\begin{lemma} The collection $\mathcal A_\Gamma[\pi,x]$
is a $\mathbb T$-algebra on $\Gamma^x$ so long as
$[\sigma]\cap \{ \sigma_k : k\in \pi(L_x)\}$ is not empty
for all $\sigma\in2^{<\omega}$. 
\end{lemma}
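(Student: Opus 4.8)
The plan is to verify the four defining conditions of a $\mathbb{T}$-algebra (Definition \ref{fifteen}) for the family $\mathcal{A}_\Gamma[\pi,x]$ on the tree $\Gamma^x$. The essential observation is that the new generators $\{a^{\pi,x}_\sigma : \sigma\in 2^{<\omega}\}$ are designed to mimic the elementary construction of Proposition \ref{basic}, but with the atoms $\{k\}$ of $\omega$ replaced by the pieces $\{c^x_k : k\in\omega\}$. The hypothesis that $[\sigma]\cap\{\sigma_k : k\in\pi(L_x)\}$ is nonempty for all $\sigma$ is precisely the hypothesis needed to apply Proposition \ref{basic} (with the set $L$ there taken to be $L_x$), guaranteeing that the relevant sets are nonempty and the sequence is proper.

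First I would check the structural conditions (1)--(4) of Definition \ref{fifteen}. Downward closure and twinning of $\Gamma^x = \Gamma\cup\{x^\frown\sigma : \sigma\in 2^{<\omega}\}$ follow from the corresponding properties of $\Gamma$ together with the fact that the appended cone $\{x^\frown\sigma\}$ over the countable branch $x\in X(\Gamma,\aleph_0)$ is itself a downward-closed twinned tree above $x$; here one uses $\lambda_x<\omega_1$ so that $x^\frown\sigma$ is a legitimate node of $2^{<\omega_1}$. Condition (3) is immediate since we set $a^{\pi,x}_x=\emptyset$ and the only non-successor nodes among the new ones is $x$ itself. Condition (4), that $a^{\pi,x}_{\sigma^\frown 1}=\omega\setminus a^{\pi,x}_{\sigma^\frown 0}$, requires showing $\{c^x_k : k\in\omega\}$ partitions $\omega$: by construction $c^x_n = a^x_{e_{\lambda_x}(n)}\setminus\bigcup_{k<n}c^x_k$ makes the pieces pairwise disjoint, and since $e_{\lambda_x}$ is a bijection onto $\lambda_x$ the union $\bigcup_n c^x_n$ equals $\bigcup_{\alpha<\lambda_x}a^x_\alpha = \omega$ (the branch generators cover $\omega$). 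Thus $L_x$ indexes the nonempty pieces of a partition of $\omega$, and the definition of $a^{\pi,x}_{\sigma^\frown 0}$ as a union of such pieces makes its complement exactly $a^{\pi,x}_{\sigma^\frown 1}$.

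The substantive condition is (5): for each $y\in 2^{\leq\lambda_x+\omega}$, the sequence $\mathcal{A}_{\Gamma^x,y}$ must be proper and coherent. For branches $y$ that remain inside $\Gamma$ this is inherited from $\mathcal{A}_\Gamma$ being a $\mathbb{T}$-algebra. For the new branches, which all extend $x$, I would argue that the tail sequence $\{a^{\pi,x}_{x\restriction(\lambda_x+n)+1} : n\in\omega\}$ is, after the identification $\{k\}\leftrightarrow c^x_k$, literally the coherent proper sequence produced by Proposition \ref{basic} applied with permutation $\pi$ and set $L_x$; the hypothesis of the lemma is exactly what Proposition \ref{basic} demands. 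The main obstacle is then to verify that the finitely many generators $\{a^x_\alpha : \alpha<\lambda_x\}$ lying below the branching point $x$ cohere with this new tail, i.e. that for $\alpha\leq x\restriction(\lambda_x+n)+1$ one of the two coherence alternatives of Definition \ref{fifteen} holds. This reduces to the key relation $a^x_{e_{\lambda_x}(m)}\subseteq \bigcup_{k\leq m} c^x_k$, which follows by induction from the recursive definition of $c^x_m$ and shows that each old generator $a^x_\alpha$ is, modulo a finite union of earlier pieces, captured inside the new sequence; combined with properness of the $c^x_k$-partition this yields coherence. I expect this gluing of the old initial segment to the lexicographically-indexed new tail to be the one place requiring genuine care, whereas conditions (1)--(4) and the Proposition~\ref{basic} core are routine.
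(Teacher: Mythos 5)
The paper states this lemma without proof, so there is no official argument to measure you against; your overall plan (verify Definition \ref{fifteen} directly, viewing the new generators as the Proposition \ref{basic} construction with the singletons $\{k\}$ replaced by the pieces $c^{x}_k$) is certainly the intended one. One side claim is false, though harmless: $\bigcup_n c^{x}_n=\bigcup_{\alpha<\lambda_x}a^x_\alpha$ need not equal $\omega$ --- already in the basic algebra on $2^{<\omega}$ the union of the generators along the all-zero branch omits the infinitely many $k$ with $\sigma_k$ of the form $0^n$. Nothing is lost because condition (4) of Definition \ref{fifteen} holds by fiat: $a^{\pi,x}_{\sigma^\frown 1}$ is simply \emph{defined} to be $\omega\setminus a^{\pi,x}_{\sigma^\frown 0}$, exactly as in Proposition \ref{basic}, and the residue $\omega\setminus\bigcup_k c^{x}_k$ just sits inside every ``$1$-side'' new generator, which affects neither coherence nor properness. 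But you should not assert that the $c^{x}_k$ partition $\omega$.

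The substantive gap is in the old-versus-new coherence check --- precisely the step you flag as ``requiring genuine care'' and then leave unfinished. Fix $\alpha<\lambda_x$, put $m=e_{\lambda_x}^{-1}(\alpha)$, and let $b$ be a new generator, so $a^x_\alpha\subseteq\bigcup_{k\le m}c^{x}_k$ while $b$ is a union of pieces (or the complement of one). Your key relation only bounds $a^x_\alpha\cap b$ and $a^x_\alpha\setminus b$ by unions of the sets $a^x_{e_{\lambda_x}(k)}$ for $k<m$; but the coherence condition demands a finite $F\subseteq\alpha$, and since $e_{\lambda_x}$ is an arbitrary bijection the ordinals $e_{\lambda_x}(k)$, $k<m$, may lie above $\alpha$. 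What closes the argument is a claim you never state, and which uses the coherence of the original sequence $\mathcal A_{\Gamma,x}$ in an essential way: \emph{at most one} $k\le m$ has $a^x_\alpha\cap c^{x}_k$ outside the ideal generated by $\{a^x_\xi:\xi<\alpha\}$. (If $k_1<k_2$ were two such, then necessarily $e_{\lambda_x}(k_1)>\alpha$, and coherence of the original sequence applied to the pair $\alpha,e_{\lambda_x}(k_1)$ must yield $a^x_\alpha\subseteq a^x_{e_{\lambda_x}(k_1)}\cup a^x_{F}$ for some finite $F\subseteq\alpha$; since $c^{x}_{k_2}$ is disjoint from $a^x_{e_{\lambda_x}(k_1)}$ by construction, this gives $a^x_\alpha\cap c^{x}_{k_2}\subseteq a^x_F$, contradicting the choice of $k_2$.) Granting the claim, whichever of $a^x_\alpha\cap b$, $a^x_\alpha\setminus b$ avoids the single exceptional piece is contained in a finite union indexed below $\alpha$, and coherence follows; without it, the ``gluing'' does not go through as written.
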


We skip the  proof of this next result since the
only non-immediate property in the definition
of being a $\mathbb T$-algebra depends only on the behavior
of each $\mathcal A_{\Gamma,x}$.

\begin{proposition} Let $Y\subset X(\Gamma,\aleph_0)$ and let
  $\pi$ be a permutation on $\omega$.
  $\mathcal A_\Gamma[\pi, Y]$ is a $\mathbb T$-algebra on $\Gamma^Y$
  so long
   as $\mathcal A_\Gamma[\pi,y]$ is a $\mathbb T$-algebra for all
   $y\in Y$.   
 \end{proposition}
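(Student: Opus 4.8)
The plan is to verify the five conditions in the definition of a $\mathbb T$-algebra (Definition \ref{fifteen}) for $\mathcal A_\Gamma[\pi, Y]$ on the tree $\Gamma^Y$, reducing each to the corresponding property for the individual extensions $\mathcal A_\Gamma[\pi,y]$, which are assumed to be $\mathbb T$-algebras. First I would observe that $\Gamma^Y = \bigcup\{\Gamma^y : y\in Y\}$ is downward closed, twinned, and has no maximal elements, since each $\Gamma^y = \Gamma \cup \{y^\frown\sigma : \sigma\in 2^{<\omega}\}$ has these properties and they are preserved under unions of subtrees all containing the common stem $\Gamma$. Conditions (2), (3), and (4) of Definition \ref{fifteen} are \emph{local}: they concern a single node $\sigma$ (and its twin $\sigma^\dagger$ or successor behavior), so they hold for $\mathcal A_\Gamma[\pi,Y]$ precisely because each generator $a_\sigma$ with $\sigma\in\Gamma^Y$ already appears in some $\mathcal A_\Gamma[\pi,y]$ where it satisfies the condition.

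The substantive step, and the one the authors flag as ``the only non-immediate property,'' is condition (5): for each branch $z\in 2^{\leq\omega_1}$, the induced sequence $\mathcal A_{\Gamma^Y, z}$ must be proper and coherent. The key point I would stress is that $\mathcal A_{\Gamma^Y, z}$ depends only on the generators indexed by nodes \emph{along the single branch} $z$, i.e. on $\{a_{z\restriction\alpha+1} : z\restriction\alpha+1\in\Gamma^Y\}$. So the argument splits into two cases according to where $z$ lives. If $z$ is a branch through the common core $\Gamma$ that never enters any new part $\{y^\frown\sigma : \sigma\in 2^{<\omega}\}$, then its induced sequence is just $\mathcal A_{\Gamma,z}$, proper and coherent since $\mathcal A_\Gamma$ is a $\mathbb T$-algebra. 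Otherwise $z$ passes through some $y\in Y$ (meaning $y\subsetneq z$ or $y$ is an initial segment that $z$ extends into the $2^{<\omega}$-attachment), and then every node of $z$ lies in $\Gamma^y$, so the induced sequence $\mathcal A_{\Gamma^Y, z}$ coincides with $\mathcal A_{\Gamma^y, z}$, which is proper and coherent by the hypothesis that $\mathcal A_\Gamma[\pi,y]$ is a $\mathbb T$-algebra.

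The main obstacle to watch for is confirming that a branch cannot ``mix'' generators from two different attachments $\Gamma^{y}$ and $\Gamma^{y'}$ with $y\neq y'$ in $Y$, for if it could, coherence would have to be checked across the two systems and the local-reduction argument would break. I would resolve this by noting that the new nodes $\{y^\frown\sigma : \sigma\in 2^{<\omega}\}$ and $\{y'^\frown\sigma : \sigma\in 2^{<\omega}\}$ lie above \emph{distinct} elements $y,y'\in X(\Gamma,\aleph_0)$, which are incomparable maximal branches of $\Gamma$; hence no single branch $z$ of $\Gamma^Y$ can have initial segments extending into both attachments, and once $z$ enters one attachment $\Gamma^y$ its entire tail stays in $\Gamma^y$. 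This guarantees that for every $z$ the induced sequence lives entirely inside one of the given $\mathbb T$-algebras, so condition (5) transfers directly.

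\begin{proof}
Conditions (1), (2) of Definition \ref{fifteen} hold for $\Gamma^Y=\bigcup\{\Gamma^y:y\in Y\}$ since each $\Gamma^y$ is downward closed and twinned and these properties pass to unions of subtrees sharing the common initial tree $\Gamma$. Conditions (3) and (4) are conditions on a single node (and its twin or successor), so they hold because every $\sigma\in\Gamma^Y$ lies in some $\Gamma^y$, where the generator $a_\sigma$ from $\mathcal A_\Gamma[\pi,y]$ already satisfies them. For condition (5), fix $z\in 2^{\leq\omega_1}$ and consider $\mathcal A_{\Gamma^Y, z}$, which is determined solely by the generators indexed along $z$. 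The distinct points of $Y\subseteq X(\Gamma,\aleph_0)$ are pairwise incomparable maximal branches of $\Gamma$, so the attachments $\{y^\frown\tau:\tau\in 2^{<\omega}\}$ for different $y\in Y$ are pairwise incomparable in $\Gamma^Y$. Consequently, once $z$ enters an attachment $\Gamma^y$, every subsequent node of $z$ remains in $\Gamma^y$; and if $z$ never leaves $\Gamma$, all its nodes lie in $\Gamma$. In the first case $\mathcal A_{\Gamma^Y, z}=\mathcal A_{\Gamma^y, z}$, which is a proper coherent sequence because $\mathcal A_\Gamma[\pi,y]$ is a $\mathbb T$-algebra; in the second case $\mathcal A_{\Gamma^Y, z}=\mathcal A_{\Gamma, z}$, which is proper and coherent because $\mathcal A_\Gamma$ is a $\mathbb T$-algebra. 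Hence condition (5) holds for all $z$, and $\mathcal A_\Gamma[\pi,Y]$ is a $\mathbb T$-algebra on $\Gamma^Y$.
\end{proof}
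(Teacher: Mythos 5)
Your proof is correct and follows exactly the reasoning the paper indicates when it skips the proof: the only non-immediate condition is the properness/coherence of each branch sequence, and since distinct elements of $X(\Gamma,\aleph_0)$ are incomparable, every branch of $\Gamma^Y$ lies entirely within a single $\Gamma^y$ (or within $\Gamma$), so the condition transfers from the hypothesized $\mathbb T$-algebras $\mathcal A_\Gamma[\pi,y]$. You have simply written out the routine verification the author omits.
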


Let $Q_{perm}$ denote the poset consisting of functions $\psi\in
\omega^{<\omega}$ that are  1-to-1. 
$Q_{perm}$ is ordered by extension and is 
forcing isomorphic to $\mathop{Fn}(\omega,2)$. If $G$ is the generic
for $Q_{perm}$, then $\pi_G = \bigcup G$ is a permutation on $\omega$.

\begin{lemma} Let $\mathcal A_{\Gamma}$ be a $\mathbb T$-algebra
  and let $x\in X(\Gamma,\aleph_0)$ and let
  $\{x_n : n\in\omega\}\subset X(\Gamma)$ converge to $x$
  in $\tau(\mathcal A_{\Gamma})$.
If  $G$ is  a $Q_{perm}$ generic filter
and $\pi = \bigcup G$,
then $\mathcal A_\Gamma[\pi,x]$
is a $\mathbb T$-algebra on $\Gamma^x$ and
 $\{x_n:n\in \omega\}$ does not
converge in $\tau(\mathcal A_\Gamma[\pi,x])$.
\end{lemma}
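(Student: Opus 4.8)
The plan is to prove the two assertions separately, using the preceding lemma for the first and an analysis of the ultrafilters $\mathcal U_{x_n}$ together with genericity of $\pi$ for the second.

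\smallskip
\emph{That $\mathcal A_\Gamma[\pi,x]$ is a $\mathbb T$-algebra.} By the preceding lemma it suffices to show the generic $\pi$ satisfies $[\sigma]\cap\{\sigma_k:k\in\pi(L_x)\}\neq\emptyset$ for every $\sigma\in 2^{<\omega}$. First I would observe that $L_x$ is infinite: if only $c^{x}_0,\dots,c^{x}_{N-1}$ were nonempty, then $\bigcup_n a^x_{e_{\lambda_x}(n)}=\bigcup_{j<N}a^x_{e_{\lambda_x}(j)}$, and choosing $\beta<\lambda_x$ strictly above the finitely many indices $e_{\lambda_x}(j)$ ($j<N$) would present $a^x_\beta$ as covered by finitely many earlier members, contradicting properness of $\mathcal A_{\Gamma,x}$. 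With $L_x$ infinite, $\{k:\sigma_k\in[\sigma]\}$ is infinite for each $\sigma$, so a one-step extension argument shows that $\{\psi\in Q_{perm}:(\exists m\in L_x\cap\dom\psi)\ \sigma_{\psi(m)}\in[\sigma]\}$ is dense (given $\psi$, choose $m\in L_x\setminus\dom\psi$ and $k\notin$ the range of $\psi$ with $\sigma_k\in[\sigma]$, and set $\psi(m)=k$). Genericity over the countably many $\sigma$ gives the condition, hence $\mathcal A_\Gamma[\pi,x]$ is a $\mathbb T$-algebra on $\Gamma^x$.

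\smallskip
\emph{Reducing the possible limits.} The inclusion $\mathcal B_\Gamma\subseteq\mathcal B_{\Gamma^x}$ induces a continuous restriction map on Stone spaces sending $\mathcal U_{x_n}\mapsto\mathcal U_{x_n}\cap\mathcal B_\Gamma=\mathcal U_{x_n}$ and $\mathcal U_{x^\frown z}\mapsto\mathcal U_x$ (the base $\{\omega\setminus a^x_\beta:\beta<\lambda_x\}$ of $\mathcal U_x$ already generates an ultrafilter on $\mathcal B_\Gamma$). Thus any limit of $\{x_n\}$ in $\tau(\mathcal A_\Gamma[\pi,x])$ restricts to a limit of $\{x_n\}$ in the Hausdorff space $\tau(\mathcal A_\Gamma)$, which by hypothesis is $x$; since $x\notin X(\Gamma^x)$ and the other branches of $\Gamma$ restrict to themselves, the only possible limits are the branches $x^\frown z$, $z\in 2^\omega$.

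\smallskip
\emph{Locating each $x_n$ on a single block (the key step).} For each $n$ set $\alpha_n=\varphi_x(x_n)$ and $k_n=e_{\lambda_x}^{-1}(\alpha_n)$. Since $(x_n\restriction(\alpha_n+1))^\dagger=x\restriction(\alpha_n+1)$, the defining relation $a_{\sigma^\dagger}=\omega\setminus a_\sigma$ gives $a^x_{\alpha_n}=\omega\setminus a_{x_n\restriction(\alpha_n+1)}\in\mathcal U_{x_n}$; as $a^x_{\alpha_n}=a^x_{e_{\lambda_x}(k_n)}\subseteq\bigcup_{j\le k_n}c^{x}_j$, the finite disjoint union $\bigcup_{j\le k_n}c^{x}_j$ lies in $\mathcal U_{x_n}$. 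Each block $c^{x}_j$ is a finite Boolean combination of generators, hence a member of $\mathcal B_\Gamma$, so exactly one block $c^{x}_{j_n}$ with $j_n\le k_n$ belongs to $\mathcal U_{x_n}$, and $j_n$ is computed in the ground model. Because every new generator $a^{\pi,x}_{\rho^\frown 0}$ is a union of whole blocks, $\mathcal U_{x_n}$ decides it according to whether $c^{x}_{j_n}$ is included, i.e.\ according to whether $\rho^\frown 1\subseteq\sigma_{\pi(j_n)}$; so the entire trace $t_n\in 2^\omega$ of $\mathcal U_{x_n}$ on the new generators is a definable function of the single value $\pi(j_n)$. Moreover no value $j^*$ occurs for infinitely many $n$: otherwise the clopen set $c^{x}_{j^*}\in\mathcal B_\Gamma$ would lie in $\mathcal U_{x_n}$ infinitely often while $x_n\to x$, forcing $c^{x}_{j^*}\in\mathcal U_x$, which is impossible since $c^{x}_{j^*}\subseteq a^x_{e_{\lambda_x}(j^*)}$ and $\omega\setminus a^x_{e_{\lambda_x}(j^*)}\in\mathcal U_x$. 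Hence $n\mapsto j_n$ is finite-to-one, and in particular $\{j_n:n\in\omega\}$ is infinite. I expect this step, identifying the single ground-model block $c^{x}_{j_n}$ so that the behaviour of $\mathcal U_{x_n}$ on the generic generators is controlled by the one value $\pi(j_n)$, and establishing finite-to-oneness, to be the main obstacle.

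\smallskip
\emph{Finishing with genericity.} It remains to show the traces $t_n$ settle on no branch. Fix $\tau\in 2^{<\omega}$ and $N\in\omega$. Since $\{j_n:n>N\}$ is infinite, given any $\psi\in Q_{perm}$ I can pick $n>N$ with $j_n\notin\dom\psi$ and a node $\nu$ whose lexicographic index avoids the range of $\psi$ and for which the trace determined by that index extends $\tau$ (possible since $t_n$ is a definable function of $\sigma_{\pi(j_n)}$ whose range meets every $[\tau]$), and then extend $\psi$ so that $\pi(j_n)$ is that index. This shows the relevant set is dense, so in $V[G]$ for every $\tau$ there are infinitely many $n$ with $\tau\subseteq t_n$. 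Thus $\{t_n\}$ is dense in $2^\omega$ and converges to no branch $z$, whence $\{x_n\}$ converges to no $x^\frown z$; combined with the reduction above, $\{x_n\}$ does not converge in $\tau(\mathcal A_\Gamma[\pi,x])$.
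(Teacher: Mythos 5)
The paper states this lemma without proof, so there is no ``paper's argument'' to compare against; judged on its own terms, your proof is correct and complete. Your first part (properness of $\mathcal A_{\Gamma,x}$ forces $L_x$ to be infinite, then a routine density argument over the countably many $\sigma$) is exactly what is needed to invoke the preceding lemma; the only cosmetic slip is that conditions in $Q_{perm}$ have domain an initial segment of $\omega$, so ``set $\psi(m)=k$'' should read ``extend $\psi$ to domain $m+1$ with $\psi(m)=k$ and the intermediate values chosen injectively.'' Your second part is the real content, and the key step is sound: since $a^x_{\alpha_n}=\omega\setminus a_{x_n\restriction(\alpha_n+1)}$ is a generator of $\mathcal U_{x_n}$ and the blocks $c^x_j$ are pairwise disjoint members of $\mathcal B_\Gamma$ whose unions (or complements of unions) exhaust the new generators, the trace of $\mathcal U_{x_n}$ on the new levels is indeed determined by the single ground-model index $j_n$ via $t_n=\sigma_{\pi(j_n)}{}^\frown 0^\omega$, and your finite-to-one argument (a constant value $j^*$ would put the clopen set $c^x_{j^*}$ in $\mathcal U_x$, contradicting $\omega\setminus a^x_{e_{\lambda_x}(j^*)}\in\mathcal U_x$) is exactly right. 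The final translation from ``$\{t_n\}$ meets every $[\tau]$ infinitely often'' to ``$\{x_n\}$ converges to no $x^\frown z$'' deserves one more line than you give it --- the subbasic condition at a level where $z(m)=0$ is weaker than $t_n(m)=0$ --- but taking the first coordinate where $t_n$ and $z$ disagree shows the corresponding subbasic neighborhood of $x^\frown z$ omits $x_n$, so density does suffice. Your argument is also consonant with how the paper uses this construction elsewhere: the block decomposition $c^x_n$ and the redirection of would-be limits by generically choosing $\pi(j)$ reappear verbatim in the proofs of Lemma \ref{canonical} and Theorem \ref{Dpreserve}.
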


Here is where Cohen genericity will help create
a canonical $\mathbb T$-algebra
that has the ScP.

\begin{lemma} 
  Let $\langle P_\alpha, \dot Q_\alpha : \alpha<\mu\rangle$ be a
  finite support iteration of $\sigma$-centered posets.\label{canonical} 
  Assume that $E$ is a set of limit ordinals that is cofinal
in $\mu$ and that
  $\dot Q_\alpha$ is the canonical $P_\alpha$ name for
 $Q_{perm}$ for all $\alpha\in E$.
  Let $\mathcal A_\Gamma$ be a $\mathbb T$-algebra
  and let $G\subset P_\mu$ be a generic filter. For each
  $\alpha<\mu$, let $G_\alpha = G\cap P_\alpha$ and
  let $\pi_\alpha$ be the permutation obtained 
  from $G_{\alpha+1}\cap \val_{G_\alpha}(\dot Q_\alpha)$. 

\noindent
In $V[G]$, we recursively define $\langle
\Gamma_\alpha,\Gamma_{\alpha+1} : \alpha \in 
E\rangle $ and $\langle
\mathcal A_{\Gamma_\alpha} ,\mathcal A_{\Gamma_{\alpha+1}} 
: \alpha\in E\rangle$:
\begin{enumerate}
  \item $\Gamma_\alpha = \Gamma\cup \{ \Gamma_{\beta+1} : \beta\in
    E\cap \alpha\}$
and
$\mathcal A_{\Gamma_\alpha} = \mathcal A_{\Gamma}\cup
      \bigcup \{ \mathcal A_{\Gamma_{\beta+1}}  : \beta \in E\cap \alpha\}$,
    \item $\Gamma_{\alpha+1} = \Gamma_{\alpha}^{Y_\alpha}$
      where $Y_\alpha = X(\Gamma_\alpha,\aleph_0)\cap V[G_\alpha]$,
      \item $\mathcal A_{\Gamma_{\alpha+1}}$ is defined as
$\mathcal A_{\Gamma_\alpha}[{\pi_\alpha},Y_\alpha]$.
\end{enumerate}
Then, in $V[G]$,
$\mathcal A_{\Gamma_\mu}$ is a $\mathbb T$-algebra,
 and $\mathcal A_{\Gamma_\mu}$ 
has the ScP so long as  each $\mathcal A_{\Gamma_\alpha}$ has the ScP.
\end{lemma}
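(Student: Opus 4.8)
The plan is to verify the two assertions branch by branch, reducing everything to the proper coherent sequences $\mathcal A_{\Gamma_\mu,x}$ for $x\in 2^{\omega_1}$. For the $\mathbb T$-algebra claim I would check that each clause of Definition \ref{fifteen} is preserved by the recursion. At a successor stage $\alpha\in E$ the collection $\mathcal A_{\Gamma_{\alpha+1}}=\mathcal A_{\Gamma_\alpha}[\pi_\alpha,Y_\alpha]$ is a $\mathbb T$-algebra by the Proposition on $\mathcal A_\Gamma[\pi,Y]$, whose hypothesis reduces to showing that $[\sigma]\cap\{\sigma_k:k\in\pi_\alpha(L_y)\}$ is nonempty for each $y\in Y_\alpha$ and each $\sigma\in 2^{<\omega}$. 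This is immediate from Cohen genericity: since $L_y\in V[G_\alpha]$ is infinite and $\pi_\alpha$ is $Q_{perm}$-generic over $V[G_\alpha]$, the conditions $\psi\in Q_{perm}$ that map some element of $L_y$ to an index $m$ with $\sigma_m\in[\sigma]$ are dense. The remaining clauses are local to individual nodes and to individual branches, so they pass to increasing unions; at limit stages one only notes that the coherence and properness of $\mathcal A_{\Gamma_\mu,x}$ are each witnessed by finitely or countably much data, hence already hold at the stage where that data first appears.

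For the ScP claim, recall that $\mathcal A_{\Gamma_\mu}$ has the ScP exactly when $\mathcal A_{\Gamma_\mu,x}$ does for every $x\in 2^{\omega_1}$, and that a coherent sequence of countable length has the ScP by definition; thus only those $x$ that are genuine \emph{uncountable} branches of $\Gamma_\mu$ (with $\lambda_x=\omega_1$) require attention. Since $\Gamma_\mu\subseteq 2^{<\omega_1}$ and $P_\mu$ is a finite-support iteration of $\sigma$-centered posets, the Kunen--Tall Proposition (\cite{KunenTall}) gives that every such $x$ lies in the ground model. The key structural point is that a grafting step $\Gamma_\alpha\mapsto\Gamma_{\alpha+1}$ attaches copies of $2^{<\omega}$ only below \emph{countable} maximal branches $y\in Y_\alpha$, and no such $y$ is an initial segment of an uncountable branch of $\Gamma_\alpha$ (otherwise $y$ would not be maximal in $\Gamma_\alpha$). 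Hence this step adds no new uncountable branch and leaves the generators $\langle a^x_\beta:\beta<\omega_1\rangle$ along each existing one unchanged, and the same holds at limit unions. Consequently, if the stages at which the nodes of $x$ first appear are bounded below $\mu$, then $x$ is an uncountable branch of $\Gamma_\alpha$ for the limit $\alpha<\mu$ equal to their supremum, whence $\mathcal A_{\Gamma_\mu,x}=\mathcal A_{\Gamma_\alpha,x}$, which has the ScP by the standing assumption on $\mathcal A_{\Gamma_\alpha}$.

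The main obstacle is the one remaining case: an uncountable branch $x$ of $\Gamma_\mu$ that is \emph{genuinely new}, first completed at a limit stage (in particular at $\mu$ itself when $\operatorname{cf}(\mu)=\omega_1$), so that $x=\bigcup_{\xi<\omega_1}y_\xi$ with each $y_\xi=x\restriction\delta_\xi$ a countable branch grafted at a stage $\beta_\xi$ and $\sup_\xi\beta_\xi=\mu$. Here the ScP cannot be inherited, since each $y_\xi$ is countable and carries no ScP content. Instead I would exploit the coherent \emph{design} of the grafting: along the stretch of $x$ inside the tree attached below $y_\xi$, the generators are unions of the partition pieces $c^{y_\xi}_n$, which refine the generators of $\mathcal A_{\Gamma_{\beta_\xi},y_\xi}$, so the compact-open sets $\widehat{a}_\gamma$ at grafted levels are controlled by those at the levels of $y_\xi$. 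Given a stationary $S\subseteq\omega_1$ that witnessed a putative failure of ScP for $\mathcal A_{\Gamma_\mu,x}$, I would run a pressing-down argument in the style of Lemmas \ref{subset} and \ref{bigsets}, using the stationarity of $S$ and these coherence relations to press down to a single finite $F\subseteq\omega_1$, relying on the Cohen genericity of the permutations $\pi_{\beta_\xi}$ together with Lemma \ref{lemma7} (finite-support ccc iteration preserves the ScP) to guarantee that the covers of the individual grafted portions actually materialise. Assembling these local covers into one finite $F$ with $\{\widehat{a}_F\}\cup\{\widehat{a}_\gamma:\gamma\in S\}$ covering $\omega_1+1$ finishes the verification; coordinating the genericity at cofinally many stages with the tree structure of the new branch $x$ is where the real difficulty lies.
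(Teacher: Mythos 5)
Your reduction to branches, the density argument for the $\mathbb T$-algebra clauses, and the disposal of branches lying in some $X(\Gamma_\alpha)$ (together with the observation that only $\operatorname{cf}(\mu)=\omega_1$ leaves anything to prove) all match the paper. The genuine gap is the remaining case --- a branch $x$ assembled cofinally in $\mu$ --- which is the entire content of the paper's proof and which you explicitly leave unresolved (``where the real difficulty lies''). Moreover, the tools you propose for it would not work as stated: Lemma \ref{lemma7} preserves the ScP of a coherent sequence that is \emph{fixed} in some intermediate model and already has the ScP there, whereas $\mathcal A_{\Gamma_\mu,x}$ for such an $x$ only exists in the final model and no uncountable initial segment of it exists at any earlier stage; likewise Lemmas \ref{subset} and \ref{bigsets} \emph{presuppose} the ScP of the sequence under discussion, so they cannot be used to establish it.

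What is actually needed is a direct forcing argument showing that for any $P_\mu$-name $\dot S$ of a stationary set there is a finite $F$ such that $F\cup\dot S$ is forced to be a cover for $\mathcal A_{\Gamma_\mu,x}$. One fixes conditions $p_\delta$ deciding $\delta\in\dot S$ and the ordinal $\gamma_\delta$ at which $x\restriction\gamma_\delta$ is a countable maximal branch of the relevant $\Gamma_{\alpha_\delta}$, presses down on the \emph{supports} of the $p_\delta$ (not on the coherent sequence) to stabilize everything below $\mu_\delta$, and stabilizes the $Q_{perm}$-coordinate $p_\delta(\alpha_\delta)=\psi$ together with $\rho=e_{\lambda_{\gamma_\delta}}\restriction\dom(\psi)$. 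The finite exceptional set is $F=\{\rho(k):k\in\dom(\psi)\}$: it records exactly the finitely many pieces $c^y_k$ whose routing the condition has already decided. For $\xi\notin\widehat a^{\dot x}_F$ and $\delta\in S$ with $\gamma_\delta>\xi$, the minimal $j$ with $\xi\in\widehat a^{y}_{e_\lambda(j)}$ lies outside $\dom(\psi)$, so by density in $Q_{perm}$ one extends $\psi$ to $\tilde\psi$ with $\sigma_{\tilde\psi(j)}$ in the appropriate half of $2^{<\omega}$, forcing $c^y_j\subseteq a^{\dot x}_\delta$ and hence $\xi\in\widehat a^{\dot x}_\delta$; since $p_\delta\Vdash\delta\in\dot S$, no condition can force $\xi$ to remain uncovered. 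None of this mechanism --- in particular the identification of the finite exceptional set with the already-decided fragment of the generic permutation at the grafting stage --- appears in your sketch, so the hard case remains unproved.
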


\begin{proof}
  We leave the routine verification that $\mathcal A_{\Gamma_\mu}$ is
  a $\mathbb T$-algebra as an exercise.
Assume that $\mathcal A_{\Gamma_\alpha}$ has the ScP for all
$\alpha\in E$. 
Fix any $x\in X(\Gamma_\mu)$ with $\lambda_x=\omega_1$.
We must prove that   $\mathcal A_{\Gamma_\mu,x}$ has the ScP.
It is immediate that if $x\in X(\Gamma_\alpha)$ 
for some $\alpha\in E$, then
$\mathcal A_{\Gamma_\mu,x} = \mathcal A_{\Gamma_\alpha,x}$
    does have the ScP. Therefore we may assume that
    $x\notin \bigcup_{\alpha\in E}X(\Gamma_\alpha)$.
    Since $\Gamma_\mu = \bigcup_{\alpha\in E}\Gamma_\alpha$, we
    may choose $\gamma_\alpha\in \omega_1$, for each
 $\alpha\in E$,  so that
    that $x\restriction \gamma_\alpha \in X(\Gamma_\alpha)$.
 Note that  $\langle \gamma_\alpha : \alpha\in E\rangle$ is monotone
    increasing and cofinal in $\omega_1$. Therefore the proof is
    complete if $\mu$ does not have cofinality $\omega_1$. 
    
    Now we return to $V$ and argue by forcing.
    For each $\alpha\in E$, let $\dot \Gamma_\alpha$ be a
    nice name for $\Gamma_\alpha$. 
    Fix a nice
     $P_\mu$-name for $\dot x$ and $p\in P_\mu$ forcing
     that $\dot x \in 2^{\omega_1}$ and, for each $\alpha\in E$,
     let $\dot \gamma_\alpha$ be a nice $P_\mu$ name so that
      $p\Vdash \dot x\restriction \dot\gamma_\alpha \in X(\dot
      \Gamma_\alpha)$. Since $P_\mu$ is ccc and $\dot \gamma_\alpha$
      is a nice name, there is a minimal $\beta_\alpha<\mu$ such
      that $\dot x\restriction \dot \gamma_\alpha$ is a $P_{\beta_\alpha}$ name. 
      This implies that 1 forces that
       $\dot x\restriction \dot \gamma_\alpha \in
       \dot\Gamma_{\beta_\alpha+1}$. 
      Fix a cofinal sequence $\{ \alpha_\xi : \xi\in \omega_1\}\subset E$ 
so that $\alpha_\eta$ and $\beta_{\alpha_\eta}$ are less than
$\alpha_\xi$ for all $\eta<\xi$. For each limit $\delta\in \omega_1$,
 let $\mu_\delta\leq \alpha_\delta $ 
be the supremum of $\{\alpha_\eta : \eta<\delta\}$. 
Note also that for each limit $\delta$, 
$\dot x\restriction\dot \gamma_{\alpha_\delta}$
is a $P_{\mu_\delta}$-name since it is forced to equal
the union  of $\{ \dot x\restriction
\dot \gamma_{\alpha_\eta} : \eta < \delta\}$.

      Now also let
       $\dot S$ be a nice $P_\mu$-name for a stationary subset of
       $\omega_1$. There is a stationary set $S_1$ such that for
       $\delta\in S_1$, there is a $p_\delta < p$ forcing that
       $\delta \in \dot S$ and a $\gamma_\delta$ such
that  
 $p_\delta\restriction\mu_\delta\Vdash \dot
 \gamma_{\alpha_\delta} = \gamma_\delta$. We may also assume
that $p_\delta$ decides the value of $\dot x(\gamma_\delta+1)$.
 By the pressing down lemma, we can assume
        that there is some $\zeta<\mu$ such that
           $\mathop{supp}(p_\delta)\cap \mu_\delta\subset \zeta$ for
           all
           $\delta\in S_1$. Note that $p\in P_\zeta$ and,
            as is well-known, 
we can choose 
            a $P_\zeta$-generic $G_\zeta$ with $p\in G_\zeta$
so             that, in $V[G_\zeta]$,
 $S = \{ \delta\in S_1 : p_\delta\restriction \zeta\in
 G_\zeta\}$ is stationary. We continue our work
 in $V[G_\zeta]$. By passing to a stationary subset, we
 can assume that for all $\delta<\gamma$ in $S$,
  $\dom(p_\delta)\subset \mu_\gamma$.
A final
  reduction on $S$ is to assume that there is a single 
$\psi\in Q_{perm}$
  and function $\rho\in  \omega_1^m$
  so that, for all $\delta\in S$, $p_\delta(\alpha_\delta) =\psi$
 and $\rho = e_{\lambda_{\gamma_\delta}}\restriction \dom(\psi)$.

Let
$F = \{ \rho(k) : k\in \dom(\psi)\}$. We are ready to prove
that there is some $q\in G_\zeta$ forcing that $F\cup \dot S$ is a
cover.  Choose $\bar p\in P_\mu$ such that $\bar p\restriction \zeta\in
G_\zeta$ and any $\xi\in \omega_1$. We may assume that $\bar p \Vdash
\xi \notin {\widehat a\,}^{\dot x}_F$. Fix any $\delta\in S$ so
that $\xi<\gamma_\delta$ and
$\bar p\in P_{\mu_\delta}$. Now jump first to $V[G_{\mu_\delta}]$
with $\bar p\in G_{\alpha_\delta}$. Recall that $p_\delta\restriction
\mu_\delta\in G_\zeta\subset G_{\mu_\delta}$, so we may assume
that $\bar p<p_\delta$.
  Now pass further to
 $V[G_{\alpha_\delta}]$ so that $p_\delta\restriction \alpha_\delta
\in G_{\alpha_\delta}$. 
We now prove there is an extension
$q$ of $p_\delta$ (with
 $q\restriction \mu_\delta<\bar p$) that forces
that $\xi \in {\widehat a\,}^{\dot x}_\delta$.
For simpler notation we prove just
the case when $p_\delta$ forces that $\dot x(\gamma_\delta+1)=0$. 
Let $\pi$ denote $\dot \pi_{\alpha_\delta}$ and note
that $p_\delta$ forces (only)
that $\psi\subset \pi$. 

Let $y$ denote $\val_{G_{\alpha_\delta}}(\dot x\restriction \gamma_\delta)$.
Let $\langle a_\eta : \eta < \gamma_\delta\rangle$ denote
the sequence $\langle 
\val_{G_{\alpha_\delta}}(a^{y}_\eta): \eta < \gamma_\delta\rangle$. 
These are infinite subsets of $\omega$
and are also in $V[G_{\mu_\delta}]$. Also let $\lambda$ denote
$\lambda_{\gamma_\delta}$.  
Set $c^{y}_n =
  a^{y}_{e_\lambda(n)} \setminus
  \bigcup_{\ell<n} c^y_\ell$ for all $n\in\omega$ (as in
  Definition \ref{nextstep}).
Choose $j$ minimal so that
 $\xi \in {\widehat a\,}^{y}_{e_\lambda(j)}$. 
Since $F $ is the initial segment 
$\{e_{\lambda}(n)  : n \in\dom(\psi)\}$, it follows that
${\widehat a}_F = \bigcup_{k\in\dom(\psi)} c^{y}_k$ and so
 $j$ is not in the domain  of $\psi$.
Recall that $a^{\dot x}_{\delta}
= a^{\pi,x_\delta}_{\langle 0\rangle}$ is defined to
 equal $\bigcup \{ c^{y}_k : \sigma_{\pi(k)} \in [\langle
1\rangle]\}$.   Choose any $q<p_\delta
 $ with $q\restriction \alpha_\delta \in G_{\alpha_\delta}$
so that $
q(\alpha_\delta)= \tilde \psi \supset \psi
=p_\delta(\alpha_\delta)$ is
an element of $Q_{perm}$ satisfying
 that $\sigma_{\tilde \psi(j)} \in [\langle1\rangle]$. It follows from
 the minimality of $j$ that $\xi \in
  {\widehat a\,}^y_{e_\lambda(j)} 
\setminus \bigcup \{ {\widehat a\,}^y_{e_\lambda(\ell)} : \ell <
  j\}$.  Since $q$ forces that 
$c^y_j = 
  {a\,}^y_{e_\lambda(j)} \setminus \bigcup \{ { a\,}^y_{e_\lambda(\ell)} : \ell <
  j\}$ is a subset of $a^{\dot x}_\delta$, it should be clear
  that $q$ forces that
   ${\widehat a\,}^{\dot x}_\delta$ contains
$  {\widehat a\,}^y_{e_\lambda(j)} 
\setminus \bigcup \{ {\widehat a\,}^y_{e_\lambda(\ell)} : \ell <j \}$,
 and therefore $\xi$ as required.
\end{proof}

Now the main theorem.

\begin{theorem}
  It is consistent that there is an Efimov space with\label{maintheorem}
 character
  $\aleph_1$ while the splitting number is $\aleph_2$.
  In particular, it is consistent with $\mathfrak s =\aleph_2$
to have a $\mathbb T$-algebra on $2^{<\omega_1}$ whose Stone space is
an Efimov space.
\end{theorem}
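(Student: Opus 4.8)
The plan is to build the final model by a finite-support iteration of length $\omega_2$ that simultaneously (a) forces $\mathfrak s=\aleph_2$ by adding, cofinally often, reals that are unsplit over intermediate models, and (b) constructs a $\mathbb T$-algebra $\mathcal A_{\Gamma}$ on $\Gamma=2^{<\omega_1}$ whose Stone space $X(\Gamma)$ is the desired Efimov space. I would start in a ground model of $\diamondsuit$ (so in particular $\mathsf{CH}$), which is needed to run Theorems~\ref{justone} and~\ref{Dpreserve}. The guiding invariant carried throughout the iteration is that $\mathcal A_{\Gamma_\alpha}$ is a $\mathbb T$-algebra with the ScP at every stage $\alpha$; this single property is what delivers both the no-converging-sequences conclusion (via Proposition~\ref{18} and the Koppelberg/ScP argument of Lemma~\ref{seqcpt}) and the preservation machinery.

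The key steps, in order. First I would interleave three kinds of forcing at the successor stages of the iteration. At a cofinal set $E$ of stages I use the poset $Q_{perm}\cong\mathop{Fn}(\omega,2)$ so that, by Lemma~\ref{canonical}, the growing algebra $\mathcal A_{\Gamma_\mu}$ is the canonical $\mathbb T$-algebra obtained by Cohen-generically killing every countable converging sequence: each new branch $x$ of countable length gets its subtree $\Gamma^x$ attached with generators $a^{\pi_\alpha,x}_\sigma$, and the generic permutation $\pi_\alpha$ guarantees the previously-converging sequence to $x$ no longer converges. Lemma~\ref{canonical} already shows this construction preserves the ScP provided each $\mathcal A_{\Gamma_\alpha}$ has it. Second, to push $\mathfrak s$ up to $\aleph_2$ I insert Laver-style posets $\mathbb L(\mathcal D)$ for suitable ultrafilters $\mathcal D$: by Blass's theorem the $\mathbb L(\mathcal D)$-generic real is unsplit over the ground model, and a standard bookkeeping over the $\omega_2$-length iteration ensures every family of size $\aleph_1$ of potential splitting sets fails to split some added real, yielding $\mathfrak s\ge\aleph_2$ (and $\mathfrak s\le\cee=\aleph_2$ comes from the iteration having size $\aleph_2$). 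Third, and crucially, each such $\mathbb L(\mathcal D)$ must preserve the ScP of the current algebra; this is exactly what Theorem~\ref{justone} (for a single coherent sequence under $\diamondsuit$) and Theorem~\ref{Dpreserve} (lifting this to the whole $\mathbb T$-algebra, after a $\mathop{Fn}(\omega_1,2)$ preparation) provide, so at each Laver stage I select $\mathcal D$ precisely as furnished by Theorem~\ref{Dpreserve}.

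Having maintained the ScP through all three forcing types, I would then verify in the final model $V[G]$ that $X(\Gamma_{\omega_2})$ is an Efimov space of character $\aleph_1$. The $\mathbb T$-algebra is minimally generated, so by Koppelberg's theorem its Stone space contains no copy of $\beta\omega$. There are no nontrivial converging sequences: any convergent $\omega$-sequence would, by Proposition~\ref{18}, converge to a point of countable character $x\in X(\Gamma,\aleph_0)$, but the cofinal use of $Q_{perm}$ at stages in $E$ destroys every such sequence at a later stage (and by Kunen–Tall no new uncountable branches are added, so the relevant $x$ and sequence appear at some intermediate stage and are caught). Character $\aleph_1$ follows because each point of $X(\Gamma)$ corresponds to a branch of $2^{<\omega_1}$ and Lemma~\ref{sixteen} shows the branch-indexed generators furnish a base of size $\le\aleph_1$ for the ultrafilter $\mathcal U_x$.

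The main obstacle is the simultaneous bookkeeping: I must arrange a single iteration in which the ScP-preserving Laver ultrafilters (from Theorem~\ref{Dpreserve}) and the Cohen/$Q_{perm}$ sequence-killing stages are scheduled so that, at every stage, the algebra still has the ScP and the invariant feeding Theorem~\ref{Dpreserve} (namely $\diamondsuit$-style guessing of names) remains available along the iteration. In particular, the genericity needed for Theorem~\ref{Dpreserve} is stated for a $\mathop{Fn}(\omega_1,2)$ extension over a $\diamondsuit$ model, so I must confirm that the relevant $\diamondsuit$ guessing survives into each intermediate model $V[G_\alpha]$ — this is where the finite-support, ccc, length-$\omega_2$ structure and a careful reflection argument do the real work, and it is the step I expect to require the most care.
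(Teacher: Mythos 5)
Your proposal follows essentially the same route as the paper: a finite-support $\sigma$-centered iteration of length $\omega_2$ over a $\diamondsuit$ model, interleaving Cohen stages, $Q_{perm}$ stages (handled by Lemma \ref{canonical}) to extend the $\mathbb T$-algebra and kill converging sequences, and $\mathbb L(\mathcal D)$ stages with ultrafilters supplied by Theorem \ref{Dpreserve} after an $\mathop{Fn}(\omega_1,2)$ block, all while maintaining the ScP as the inductive invariant. The bookkeeping issue you flag at the end is exactly what the paper resolves by scheduling the three forcing types on $\omega_2\setminus S^2_1$, $E=S^2_1\cap(S^2_1)'$, and $S^2_1\setminus(S^2_1)'$ respectively, and by noting that ccc posets of size $\aleph_1$ preserve $\diamondsuit$ in the intermediate models.
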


\begin{proof}
  We assume that $\diamondsuit$ and $2^{\aleph_1}=\aleph_2$ holds
  and we produce a ccc poset to prove the consistency of the
  statement. Let $S^2_1$ denote the ordinals in $\omega_2$
  that have uncountable cofinality. Let
  $(S^2_1)'$ denotes the limit points of $S^2_1$ in $\omega_2$
  and    let $E = S^2_1\cap  (S^2_1)'$ (i.e. $E$ is the limit points of
  uncountable cofinality). 
  Fix a well-ordering $\sqsubset$ of $H(\aleph_2)$.
Let $\Gamma_\omega$ be $2^{<\omega}$ and $\mathcal
      A_{\Gamma_\omega}$ be as defined immediately following Proposition
      \ref{basic}.

  We define a finite support iteration $\langle P_\alpha , \dot
  Q_\alpha : \alpha\in \omega_2\}$ of $\sigma$-centered posets by a
  recursion on $\alpha$. We also let 
  $\dot \Gamma_\alpha, \dot \Gamma_{\alpha+1},
  \dot{ \mathcal A}_{\Gamma_\alpha}, \dot {\mathcal
    A}_{\Gamma_{\alpha+1}}$ be recursively 
   defined as in Lemma \ref{canonical},
  \begin{enumerate}
  \item for $\alpha\notin  S^2_1$, $\dot Q_\alpha$ is the
    $P_\alpha$ name for $\mathop{Fn}(\omega,2)$,
  \item for $\alpha\in E$, $\dot \Gamma_\alpha$ is a
     $P_\alpha$ name for $2^{<\omega}\cup\bigcup_{\beta\in E\cap \alpha} \dot
    \Gamma_{\beta+1}$,

  \item for $\alpha\in E$, $\dot Y_\alpha$ is
    the $P_\alpha$ name for $X(\dot\Gamma_\alpha,\aleph_0)$
    and 
    $\dot \Gamma_{\alpha+1} = \dot\Gamma_{\alpha}^{\dot Y_\alpha}$
(as in Definition \ref{nextstep},

  \item for all $\alpha\in E$, $\dot Q_\alpha$ is the $P_\alpha$ name
    for $Q_{Perm}$ and $\dot \pi_\alpha$ is the $P_{\alpha+1}$ name
    for the permutation added by $\dot Q_\alpha$,

  \item for $\alpha\in E$, $\dot {\mathcal A}_{\Gamma_{\alpha}}$
    is the $P_\alpha$ name for $\mathcal A_{\Gamma_\omega}\cup
     \bigcup\{ \dot {\mathcal A}_{\Gamma_{\beta+1}} : \beta\in E\cap
     \alpha\}$
     and $\dot {\mathcal A}_{\Gamma_{\alpha+1}}$ is the $P_{\alpha+1}$
     name
     for $\dot {\mathcal A}_{\Gamma_\alpha}[\pi_\alpha,\dot
       Y_\alpha]$,
and finally,
\item for $\alpha \in S^2_1\setminus (S^2_1)'$
  and $\mu_\alpha = \sup(E\cap \alpha)$, 
  $\dot{\mathcal D}_\alpha$ is the $\sqsubset$-least $P_\alpha$-name of
  an ultrafilter on $\omega$ such that $P_\alpha*\mathbb L(\dot
  {\mathcal D}_\alpha)$ forces that
  $\dot {\mathcal A}_{\Gamma_{\mu_\alpha}}$ has the ScP
  (if one exists),
  and $\dot Q_\alpha$ is the $P_\alpha$ name for $\mathbb L(\dot
  {\mathcal D}_\alpha)$. If no such $\dot {\mathcal D}_\alpha$ exists, 
   then $\dot Q_\alpha$ is the $P_\alpha$ name for $\mathop{Fn}(\omega,2)$.
  \end{enumerate}

  \begin{claim} For each $\mu\in (S^2_1)'\cup\{\omega_2\}$, $P_\mu$
    forces that $\dot{\mathcal A}_{\dot \Gamma_\mu}$ is a $\mathbb
    T$-algebra with the ScP.
  \end{claim}

  \bgroup
  \def\proofname{Proof of Claim:}

  \begin{proof} We prove the Claim by induction on $\mu$. 
If $E\cap \mu$ is cofinal in $\mu$, then this follows from 
the inductive assumption and Lemma \ref{canonical}. 
  Otherwise, let $\alpha=\sup(E\cap \mu)$ and we
     break into two
  cases. 
  In the first case $\alpha\in E$ and $\dot {\mathcal A}_{\dot \Gamma_\mu}$
  is just the $P_{\alpha+1}$ name for $\dot {\mathcal A}_{\dot \Gamma_{\alpha+1}}$. 
  Since $\dot {\mathcal A}_{\dot \Gamma_\alpha}$ is assumed to have the ScP
  in the forcing extension by $P_\alpha$,
  and since $\dot \Gamma_{\alpha+1} \setminus \dot \Gamma_\alpha $ is
  forced to be a subset of
 $ 2^{<\omega_1}$ , it follows that 
     $\dot {\mathcal A}_{\dot \Gamma_\mu}$ is forced to have the ScP. Now assume
     that $\alpha\notin E$. It then follows that $\alpha\in
     S^2_1\setminus (S^2_1)'$.  
     In this case, $\dot {\mathcal A}_{\Gamma_\mu}$ is forced to equal
       $\dot {\mathcal A}_{\Gamma_\alpha}$ (since $E\cap [
       \alpha,\mu)
       $ is empty) and so we must verify that $P_{\mu}$ preserves
       that $\dot{\mathcal A}_{\Gamma_\alpha}$ has the ScP.
       By the inductive assumption, $P_\alpha$ forces
       that $\dot {\mathcal A}_{\Gamma_\alpha}$ has the ScP
         and $ \mathop{Fn}(\omega,2)$ always preserves the ScP
         (Lemma \ref{lemma7}).
 Therefore, by   the choice of $\dot Q_\alpha$ in clause (6),
      $P_{\alpha+1}$ forces that $\dot {\mathcal A}_{\Gamma_\alpha}$ 
      has the ScP. For all $\alpha < \beta < \mu$, 
        $\dot Q_\beta$ is chosen to be $\mathop{Fn}(\omega,2)$,
        and so, again by Lemma \ref{lemma7}, $P_\mu$ forces
        that $\dot {\mathcal A}_{\dot \Gamma_\alpha}
         = \dot {\mathcal A}_{\dot \Gamma_\mu}$ has the ScP.
  \end{proof}
  
\begin{claim}
 $P_{\omega_2}$ forces that $\mathfrak s = \aleph_2$.
\end{claim}

\begin{proof}
 Since $P_{\omega_2}$ is ccc, it suffices to prove that 
 for all $\mu\in S^2_1\setminus (S^2_1)'$, $P_{\mu}$
 forces that $\dot Q_\mu$ adds an unsplit real. 
 Fix any $\mu \in S^2_1\setminus (S^2_1)'$
 and let $\alpha = \sup(E\cap \mu)< \mu$. 
It is
 well-known that  the generic real added by
 $\mathbb L(\mathcal D)$ is unsplit providing $\mathcal D$ is 
 an ultrafilter on $\omega$. By Claim 1, $P_\mu$ forces
 that $\dot{\mathcal A}_{\dot \Gamma_{\alpha+1}}$ has
 the ScP. 
Let $G_{\alpha +1}$ be any $P_{\alpha+1}$-generic filter
and let 
$\mathcal A_{\Gamma_{\alpha+1}}$ (which has the ScP)
 be the valuation by $G_{\alpha+1}$ of 
   $\dot{\mathcal A}_{\dot \Gamma_{\alpha+1}}$.
 Since $\diamondsuit$ is assumed to hold in the ground
 model and $P_{\alpha+1}$ is a ccc poset of cardinality $\aleph_1$,
$\diamondsuit$ also holds in    $V[G_{\alpha+1}]$ 
(\cite{Kunen}{VII.H.8}). By Theorem \ref{Dpreserve}, 
 there is a $\mathop{Fn}(\omega_1,2)$  name, $\dot {\mathcal D}$ 
 satisfying that $\mathop{Fn}(\omega_1,2)*\mathbb L(\dot{\mathcal D})$ 
 preserves that $\mathcal A_{\Gamma_{\alpha+1}}$ has the ScP.
    Since  the forcing extension by $P_{\mu}$ is equal to the
    forcing extension of $V[G_{\alpha+1}]$ by
     $\mathop{Fn}(\omega_1,2)$, it follows that 
     there is a $\dot D_{\mu}$ as required in clause (6) of 
     the construction so that $P_\mu$ forces
     that $\dot Q_\mu$ is $\mathbb L(\mathcal D)$ for an ultrafilter 
      $\mathcal D$.
\end{proof}

  \egroup

This completes the proof of the Theorem.
\end{proof}

\section{Splitting number and Scarborough-Stone problem}

The Scarborough-Stone problem asks if each product of sequentially
compact spaces is countably compact. A space is sequentially compact
if every infinite sequence has a converging subsequence. Just as
with the Efimov space question, this problem is not known to be
independent of ZFC. With this, the original formulation of the
problem, it is not known if an affirmative answer is consistent with
ZFC. 
By the
results in \cite{ShelahEfimov}, we have the following corollary to
Theorem \ref{maintheorem}.  

\begin{corollary} It is consistent that there is a family of
first-countable\label{smallsize} 
  sequentially compact spaces of cardinality less than $\mathfrak s$
  whose product is not countably compact. 
\end{corollary}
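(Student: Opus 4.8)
The plan is to feed the model produced by Theorem~\ref{maintheorem} into the Scarborough--Stone machinery of \cite{ShelahEfimov}. Work in the final model, where $\mathfrak s=\aleph_2$ and $\mathcal A_\Gamma$ is a $\mathbb T$-algebra on $2^{<\omega_1}$ with the ScP whose Stone space $X(\Gamma)$ is an Efimov space of character $\aleph_1$. The factor spaces will be the associated Ostaszewski spaces: for a branch $x\in X(\Gamma)$ let $O_x=(\lambda_x{+}1,\tau(\mathcal A_{\Gamma,x}))$ be the compact space of Proposition~\ref{two} and let $Z_x$ be $O_x$ with its top point $\lambda_x=\omega_1$ deleted. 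Each remaining point $\beta<\omega_1$ has the countable neighborhood $\widehat a^x_\beta\subseteq\beta{+}1$, so $Z_x$ is first-countable; by Lemma~\ref{needScP} the ScP makes $Z_x$ countably compact, and a first-countable countably compact space is sequentially compact. Thus every $Z_x$ is first-countable and sequentially compact, and---crucially---it is not compact, since the top point was removed; this non-compactness is what will let a product of such spaces fail to be countably compact.

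Next I would reformulate countable compactness of a product through ultrafilter limits. For a sequence $\langle p_n\rangle$ in $\prod_{x\in I}Z_x$, a point $q$ is a cluster point exactly when there is a free ultrafilter $p$ on $\omega$ with $q(x)=p\text{-}\lim_n p_n(x)$ in $Z_x$ for every $x$; hence the product fails to be countably compact iff there is a single sequence $\langle p_n\rangle$ such that for every $p$ some coordinate limit $p\text{-}\lim_n p_n(x)$ fails to exist in $Z_x$, i.e. equals the deleted top point of $O_x$. For each coordinate the set $B_x=\{p: p\text{-}\lim_n p_n(x)=\text{top}\}$ is closed and nowhere dense in the space $\omega^*$ of free ultrafilters: sequential compactness of $Z_x$ supplies, on a dense family of $A\in[\omega]^\omega$, a subsequence converging below the top, so the complement of $B_x$ is dense open. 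The required sequence therefore exists iff the $\aleph_1$ sets $\{B_x:x\in I\}$ cover $\omega^*$.

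The Efimov space is what furnishes such a cover. Writing $\{y_n\}$ for the dense set of isolated points of $X(\Gamma)$, the assignment $e(p)=p\text{-}\lim_n y_n$ is a continuous surjection of $\omega^*$ onto the remainder $R=X(\Gamma)\setminus\{y_n:n\in\omega\}$, and since $X(\Gamma)$ has no convergent sequences no $p$ is sent to an isolated point. For the naive diagonal sequence $p_n=\langle y_n:x\in I\rangle$, continuity of the maps $\varphi_x$ of Lemma~\ref{sixteen} gives $p\text{-}\lim_n\varphi_x(y_n)=\varphi_x(e(p))$, which is the top $\lambda_x$ precisely when $e(p)=x$; thus $B_x=e^{-1}(x)$ is a single fibre. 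As distinct branches yield incompatible ultrafilters $\mathcal U_x$, these fibres are pairwise disjoint, so this diagonal sequence covers $\omega^*$ only when $I$ contains \emph{every} branch---recovering the size-$\mathfrak s$ counterexample but not the size-$\aleph_1$ one.

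The main obstacle is exactly this reduction to fewer than $\mathfrak s$ coordinates. To cover $\omega^*$ with only $\aleph_1$ sets $B_x$ one must replace the diagonal sequence by a global $\omega$-indexed sequence whose coordinate ``top-filters'' are fat enough that $\aleph_1$ of them still exhaust $\omega^*$, even though $|R|=\aleph_2=\mathfrak s$; and it is precisely here that the ScP of $\mathcal A_\Gamma$ (in the guise of its stationary covering of $\omega_1$, cf. Lemma~\ref{needScP}) is used to select the $\aleph_1$ branches and the single sequence so that the resulting bad sets cover. I would import this construction verbatim from \cite{ShelahEfimov}. Granting it, $\{Z_x:x\in I\}$ is a family of $\aleph_1<\mathfrak s$ first-countable sequentially compact spaces whose product is not countably compact, which is the desired Scarborough--Stone counterexample.
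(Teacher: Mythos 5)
Your first half is fine: the factor spaces $Y_x$ (the Ostaszewski space $\lambda_x+1$ with the top point deleted) are the right ones, and your derivation of sequential compactness via Lemma~\ref{needScP} plus ``first-countable and countably compact implies sequentially compact'' is a legitimate (slightly different) route from the paper's, which instead lifts a sequence along $\varphi_x$ to $X(\Gamma)$ and uses that $X(\Gamma)$ is compact with no converging sequences. The problem is in the second half, and it stems from a misreading of the statement: ``of cardinality less than $\mathfrak s$'' qualifies the \emph{spaces}, not the family. Each $Y_x$ has underlying set $\omega_1$, hence cardinality $\aleph_1<\mathfrak s=\aleph_2$, while the family used in the paper is $\{Y_x : r\subset x\in 2^{\omega_1}\}$ for a fixed $r\in 2^{\omega}$, which has $2^{\aleph_1}=\aleph_2$ members. (Compare Problem (2) at the end of the paper, which asks for spaces ``each of cardinality at most $\aleph_1$''.) You correctly diagnose that the constant/diagonal sequence only fails to cluster when essentially all branches appear as coordinates, but you then set yourself the much harder task of covering $\omega^*$ by only $\aleph_1$ of the fibres $e^{-1}(x)$ and defer it to a ``verbatim import'' from \cite{ShelahEfimov}. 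That is a genuine gap: no such reduction is carried out there (the Scarborough--Stone corollary in that setting is likewise of the ``small spaces, large family'' type), and nothing in your argument produces the required sequence for an $\aleph_1$-sized subfamily.

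What the paper actually does at this point is elementary and entirely absent from your proposal. Take the sequence $\overline n$ of constant functions in $\prod\{Y_x : r\subset x\}$ and fix any candidate cluster point $\vec y$ with $\vec y(x)\ge\omega$ for all $x$; set $\sigma_x=x\restriction(\vec y(x)+1)$. Either some pair $x,x_1$ has $\sigma_{x_1}=\sigma_x^{\dagger}$, in which case the basic neighborhoods $\widehat a_{\sigma_x}$ and $\widehat a_{\sigma_x^{\dagger}}$ trace disjointly on $\omega$ and the two coordinate neighborhoods already exclude all but finitely many $\overline n$; or else for every $r\subset\sigma$ one of $\sigma^{\frown}0,\sigma^{\frown}1$ is missing from $\{\sigma_x : x\in X_r\}$, which lets one build a strictly increasing $\omega_1$-chain $\{\sigma_\xi\}$ whose union is a branch $x$ with $\vec y(x)=\omega_1$, a contradiction. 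If you keep your ultrafilter-limit framework, the corresponding observation is simply that $\bigcup_{r\subset x}e^{-1}(x)$ \emph{does} cover $\omega^*$ because $\{y_n\}$ is dense in $X(\Gamma)$ and no subsequence converges, so every ultrafilter limit lands in the remainder; no reduction in the number of coordinates is needed, and none is claimed by the corollary.
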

 
We give a brief sketch of the proof.

\begin{proof} Let $\Gamma = 2^{<\omega_1}$ and 
let $\mathcal A_\Gamma = \{ a_\sigma : \sigma \in
  2^{<\omega_1}\}$ be the $\mathbb T$-algebra constructed in
  Theorem \ref{maintheorem} in the model in which
  $\mathfrak s =\aleph_2$. For each $x\in 2^{\omega_1}$, let
  $\mathcal A_{\Gamma,x}$  be the proper coherent sequence as defined
  in Definition \ref{fifteen} and let
  $Y_x$ denote the topology on $\omega_1$ as a subspace of
  of $\omega_1+1 = \lambda_x+1$ as defined in Proposition \ref{two}.
  Also let $\varphi_x$ be the continuous
mapping from $X(\Gamma)$ onto
$Y_x\cup\{\omega_1\}$ as defined in Lemma \ref{sixteen}. It is evident
from Proposition \ref{two} that $Y_x$ is first-countable.
We prove
that $Y_x$ is sequentially compact. Fix any infinite sequence
$\{ y_n : n\in\omega\}\subset Y_x$ and choose any sequence
$\{x_n : n\in \omega\}\subset X(\Gamma)$ satisfying that
$\varphi_x(x_n)=y_n$ for all $n\in \omega$. Since $X(\Gamma)$ is
compact and 
has no converging sequences, there is some limit $z\in X(\Gamma)$
of $\{x_n :n\in \omega\}$ with $z\neq x$. Let $y=\varphi(z)\in Y_x$ and note
that $y $ is a limit of $\{ y_n : n\in \omega\}$. Since $y$ has
countable character in $Y_x$, $\{y_n :n\in\omega\}$ has a subsequence
converging to $y$. Now we prove that the product of the family
$\{ Y_x : x\in 2^{\omega_1}\}$ is not  countably compact. Choose any
$r\in 2^\omega$ and consider the product of the subfamily
$\{ Y_x : r\subset x \in 2^{\omega_1}\}$. For each $n\in \omega$,
let $\overline{n}$ denote the function in the product
$\Pi\{ Y_x : r\subset x \in 2^{\omega_1}\}$ that has value
$n$ in every coordinate (recall that $Y_x$ has base set
$\omega_1$). Let
$\vec y$ be any point in
$\Pi\{ Y_x : r\subset x \in 2^{\omega_1}\}$ and we show
that $\vec y$ is not a limit point of
 $\{ \overline{n} : n \in\omega\}$. Since each $n\in\omega$ is
isolated in each $Y_x$, we may assume that $\vec y(x)\geq\omega$ for
all $r\subset x\in 2^{\omega_1}$. 
For each $\sigma\in 2^{<\omega_1}$,
let $X_\sigma = \{x \in 2^{\omega_1} : \sigma\subset x\}$. For each
$x\in X_r$, let $\sigma_x =x\restriction (y(x)+1)$ and
note that $ \widehat a_{\sigma_x}$ is a
  neighborhood of $y(x)$ in $Y_x$.  Let 
$W_x$ be the product neighborhood
  $\pi_x ^{-1}[ \widehat a_{\sigma_x}]$ in
    $\Pi\{ Y_x : x\in X_r\}$. By Definition \ref{fifteen} (4),
    ${a}_{\sigma_x}$ and ${a}_{\sigma_x^\dagger}$ are
    disjoint and by the coherence property, so
    are        $\widehat{a}_{\sigma_x}$ and
    $\widehat{a}_{\sigma_x^\dagger}$. If there are $x,x_1 \in
    X_r$ such that $\sigma_{x_1} = \sigma_x^\dagger$,
    then $W_x \cap W_{x_1} \cap \{ \overline{n} : n\in\omega\}$
    is equal to $\{ \overline{n} : n\in
 \widehat{a}_{\sigma_x}\cap
 \widehat{a}_{\sigma_x^\dagger}\}$, and so is empty.
 Therefore we consider the case that for all $r\subset \sigma\in
 2^{<\omega_1}$, one of $\{\sigma^\frown 0, \sigma^\frown 1\}$ is not
 in the set $\{ \sigma_x : x\in X_r\}$. Beginning with $\sigma_0 = r$,
 we can now recursively construct a strictly increasing sequence
 $\{ \sigma_\xi : \xi\in\omega_1\}\subset 2^{<\omega_1}$ satisfying
 that for each $\xi\in\omega_1$ and each $x\in X_{\sigma_\xi}$,
  $\sigma_{\xi}\subsetneq \sigma_x$. Of course $x =\bigcup\{
 \sigma_\xi : \xi\in\omega_1\}$ contradicts the assumption
 that $y(x)<\omega_1$. 
\end{proof}

In the other direction, it is a natural question to ask if
a negative answer to the Scarborough-Stone problem
follows from $\mathfrak s=\aleph_1$. Many 
partial results are
known, see, for example,
Vaughan's survey article
\cite{VaughanSS}. It is shown in \cite{HrusakIsrael} that the
assumption $\diamondsuit(\mathfrak s)$, a strengthening of
$\mathfrak s=\aleph_1$, suffices. In the spirit of Corollary
\ref{smallsize}, we pose the following problems.

\begin{enumerate}
\item Does $\mathfrak s=\aleph_1$ imply a negative answer to the
  Scarborough-Stone problem?
\item Does $\mathfrak s=\aleph_1$ imply there is a family of
   sequentially compact spaces, each of cardinality at most $
   \aleph_1$, whose product is not countably compact?
   \item Does $\mathfrak s = \aleph_1$ imply there is a $\mathbb
     T$-algebra on $2^{<\omega_1}$ whose Stone space has no converging
     sequences?
\end{enumerate}

Two well-known strengthenings of the assumption $\mathfrak s=\aleph_1$
may be relevant to these questions. A splitting family $\mathcal S $
is $\aleph_0$-splitting if for every countable family
 $\{ b_n : n\in\omega\}$ of infinite subsets of $\mathbb N$, there is
a single member of $\mathcal S$ that splits each of them. It is not
known if an $\aleph_0$-splitting family of cardinality $\mathfrak s$
necessarily exists. A splitting family
$\mathcal S = \{ s_\alpha : \alpha\in \mathfrak s\}$  is
tail-splitting if for each infinite $b\subset \mathbb N$, the
set of members of $\mathcal S$ that split $b$ contains a 
final segment of $\mathcal S$. It is known to be consistent that there
is no tail-splitting sequence of cardinality $\mathfrak s$. 
We formulate a still stronger condition that is sufficient to
obtain the conclusions of problems (1)-(3). This condition will hold
in a forcing extension by uncountably many Random reals, or by a
finite support iteration with cofinality equal to $\omega_1$.
Recall that $H(\omega_1)$ is equal to the set of sets whose
transitive closure is countable.  Say that $M\subset H(\omega_1)$ is a
model if it, i.e. $(M,\in)$,
is a model of all the axioms of ZF with the exception of
the power set axiom. Of course $H(\omega_1)$ itself is a model
 (see \cite[IV]{Kunen}). 

\begin{proposition} Assume that $H(\omega_1)$ can written as an
  increasing chain $\{ M_\xi  : \xi\in\omega_1\}$ of models in such
  a way that for each $\xi $, there is a subset of $\omega$
  that splits every member of $M_\xi\cap [\omega]^\omega$,
  then there is a $\mathbb T$-algebra on $2^{<\omega_1}$ whose Stone
  space has no converging sequences. 
\end{proposition}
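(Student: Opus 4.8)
The plan is to make the entire statement rest on Proposition~\ref{18}. When $\Gamma=\ourtree$, every maximal branch has length $\omega_1$, so $X(\Gamma,\aleph_0)=\emptyset$; hence if $\mathcal A_\Gamma$ is \emph{any} $\mathbb T$-algebra on $\ourtree$ possessing the ScP, then by Proposition~\ref{18} a convergent $\omega$-sequence in $(X(\Gamma),\tau(\mathcal A_\Gamma))$ would be forced to converge to a point of $X(\Gamma,\aleph_0)=\emptyset$, which is absurd. Thus the whole task reduces to producing a single $\mathbb T$-algebra on $\ourtree$ with the ScP, and the splitting chain is meant to supply the genericity that, throughout the rest of the paper, was supplied by Cohen reals (Lemma~\ref{canonical}).

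First I would tidy the hypothesis: passing to a club of indices I may assume the chain $\{M_\xi:\xi\in\omega_1\}$ is continuous and that $M_\xi\cap\omega_1=\xi$, so that $s_\xi$ splits every infinite subset of $\omega$ lying in $M_\xi$. I then build $\langle a_\sigma:\sigma\in\ourtree\rangle$ by recursion on length, using only the mechanism of Proposition~\ref{basic} and Definition~\ref{nextstep}, but feeding in splitting reals where those results feed in a generic permutation. Concretely, to fix the branching at a node $\tau$ of length $\beta$ I pass to the disjointified remainders $c^\tau_n=a^\tau_{e_\beta(n)}\setminus\bigcup_{k<n}c^\tau_k$ of the coherent sequence already attached to $\tau$; if $\tau\in M_\beta$ I let $s_\beta$ decide which remainders go into $a_{\tau^\frown 0}$, and otherwise I use the canonical parity split $a_{\tau^\frown 0}=\bigcup\{c^\tau_n:n\text{ even}\}$ (always setting $a_{\tau^\frown 1}=\omega\setminus a_{\tau^\frown 0}$). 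Exactly as in Proposition~\ref{basic}, coherence along every branch is automatic from the disjointification, and properness is automatic in either case: the infinite index set $\{n:c^\tau_n\neq\emptyset\}$ is split into two infinite pieces, either by parity or—when $\tau\in M_\beta$—by $s_\beta$, since then that index set lies in $M_\beta$. This yields a $\mathbb T$-algebra on all of $\ourtree$, and only the ScP remains.

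The heart of the matter, and the step I expect to be genuinely hard, is verifying that each branch sequence $\mathcal A_{\Gamma,x}$ (for $x\in\ourprod$) has the ScP. I would run the covering argument of Lemma~\ref{canonical}, where an arbitrary point is covered by locating a level $\gamma$ in the given stationary set $S$ whose branching throws the relevant remainder onto the correct side; there the placement is bought with genericity of the permutation, and here it must be read off from the splitting property instead. The obstacle is that neither $S$ nor the length-$\omega_1$ branch $x$ is an element of $H(\omega_1)$, so the hypothesis can only be applied to the countable data appearing at each level. The device I would use to repair this is to fix a continuous $\in$-chain $\{N_\gamma:\gamma\in\omega_1\}$ of countable elementary submodels of $H(\aleph_2)$ containing $x$, $S$, the algebra, and the sequences $\langle M_\xi\rangle$, $\langle s_\xi\rangle$. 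For club-many $\gamma$ one has $N_\gamma\cap\omega_1=\gamma$, and then, by elementarity together with continuity of the given chain, every element of $N_\gamma\cap H(\omega_1)$ already lies in some $M_\xi$ with $\xi\in N_\gamma\cap\omega_1=\gamma$, so $N_\gamma\cap H(\omega_1)\subseteq M_\gamma$. Since $S$ is stationary, stationarily many such $\gamma$ lie in $S$; at any of them $x\restriction\gamma\in N_\gamma\cap H(\omega_1)\subseteq M_\gamma$, so the branching at $x\restriction\gamma$ is governed (per the rule above, since $|x\restriction\gamma|=\gamma$ and $x\restriction\gamma\in M_\gamma$) by the single real $s_\gamma$, which splits \emph{every} infinite set in $M_\gamma$ and hence exactly the countable covering data coded in $N_\gamma$. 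This is precisely the input that Lemma~\ref{canonical} extracted from genericity, and feeding it into that argument should yield, for every stationary $S$, a finite $F$ with $S\cup F$ a cover.

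The delicate part to get right—and where I expect to spend most of the effort—is the bookkeeping that converts the qualitative statement ``$s_\gamma$ splits everything in $M_\gamma$'' into the quantitative placement of a prescribed ordinal $\xi$ into $\widehat a^{x}_\gamma$. One must pin down, inside $N_\gamma$, the finitely many remainder indices at level $\gamma$ that are relevant to covering $\xi$, verify that the infinite set of such indices is an element of $N_\gamma\cap H(\omega_1)\subseteq M_\gamma$, and then use that $s_\gamma$ meets it to secure a $\gamma\in S$ witnessing $\xi\in\widehat a^x_\gamma$. Getting this matching to run uniformly in $\xi$, along the fusion that Lemma~\ref{canonical} uses to pass from pointwise covering to a single finite $F$, is the real content of the proof.
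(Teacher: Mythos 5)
There is a genuine gap, and it sits exactly where you predicted you would ``spend most of the effort.'' Your reduction via Proposition~\ref{18} replaces the statement to be proved (no converging sequences) by the strictly stronger target of building a $\mathbb T$-algebra on $2^{<\omega_1}$ with the ScP, and the splitting hypothesis is not the right tool for that target. The ScP demands that for every stationary $S$ and all but a compact set of points $\xi$, some single $\gamma\in S$ satisfies $\xi\in\widehat a^x_\gamma$; unwinding the construction, this asks that at level $\gamma$ the \emph{one particular} remainder index $j$ relevant to $\xi$ be thrown onto the side of the split that $x$ follows. In Lemma~\ref{canonical} that placement is obtained by a density argument: the condition $\psi$ is extended to $\tilde\psi$ with $\sigma_{\tilde\psi(j)}\in[\langle 1\rangle]$, i.e.\ genericity of the permutation lets you steer a single prescribed index. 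A fixed splitting real over $M_\gamma$ gives you no such control: it guarantees only that every \emph{infinite} set of indices in the model meets both sides, and says nothing about where an individual $j$ lands. So the final step of your third paragraph --- ``feeding it into that argument should yield, for every stationary $S$, a finite $F$ with $S\cup F$ a cover'' --- does not go through, and I see no way to repair it from the stated hypothesis. (A secondary slip: $x\restriction\gamma\notin N_\gamma$, since otherwise $\gamma=\dom(x\restriction\gamma)\in N_\gamma=N_\gamma$ would give $\gamma\in N_\gamma\cap\omega_1=\gamma$; so your rule ``use $s_\beta$ when $\tau\in M_\beta$'' need not be the rule in force at the levels you select. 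The paper avoids this by attaching to each node $\sigma$ an ordinal $\xi_\sigma>\dom(\sigma)$ large enough that the branch data below $\sigma$ lies in $M_{\xi_\sigma}$, and splitting over that model.)

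The paper's proof never mentions the ScP. It proves the absence of converging sequences directly, and in a way that uses the splitting hypothesis for exactly what it is good at: splitting an \emph{infinite} set. Given a candidate limit $x$ and a sequence $\{x_n\}$, set $y_n=\varphi_x(x_n)$ and let $k_n$ be the least index $k$ with $y_n\in\widehat{a^x_{\beta_k}}$. The countable set $\{k_n:n\in\omega\}$ lies in $M_{\xi_\sigma}$ for all sufficiently long $\sigma=x\restriction\beta$, so the splitting real chosen at that node splits it; for the infinitely many $n$ whose $k_n$ land on the side taken by $x$, one gets $y_n\in\widehat{a^x_\alpha}$, and the complement of $\widehat{a^x_\alpha}$ is a neighborhood of $x$ missing infinitely many $y_n$. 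Your construction of the algebra (disjointified remainders with a splitting real deciding the two sides) is essentially the paper's, and your elementary-submodel bookkeeping is in the right spirit; what is missing is the realization that the verification should quantify over the infinite set of first-entry indices of the given sequence rather than over single ordinals, which is precisely what lets one dispense with the ScP.
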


The proof is a minor variant of similar proofs in
\cite{Kosz1,ShelahEfimov}. 

\begin{proof} 
  For each $\alpha\in\omega_1$, let $\Gamma_\alpha = 2^{<\alpha}$.
  Also, for each $\omega \leq \alpha\in\omega_1$,
  let $e_\alpha :\omega\rightarrow \alpha$ be a bijection
  onto the successor ordinals in $\alpha$. 
  Let $\{ a_\sigma : \sigma\in \Gamma_\omega\}$ be the
  $\mathbb T$-algebra as defined in Definition \ref{nextstep}.
  For each $x\in 2^\omega$, fix a $\xi_x\geq \omega$ so that
   $e_\omega$ and $\{ a_{x\restriction n} : n\in \omega\}$ are in
  $M_{\xi_x}$. 
  By induction on $\omega \leq \alpha<\omega_1$ we construct
  a $\mathbb T$-algebra 
$\mathcal A_{\Gamma_\alpha} = \{ a_\sigma^\alpha : \sigma\in \Gamma_\alpha\}$ 
and choose ordinals $\{ \xi_\sigma : \sigma\in \Gamma_\alpha\}$ so
that the following induction hypotheses
hold for all $\sigma\in \Gamma_\alpha$:
\begin{enumerate}
\item if $\beta < \alpha$ and $\sigma\in \Gamma_\beta$,
then  $a^\beta_\sigma = a^\alpha_\sigma$,
\item    $\dom(\sigma) <\xi_\sigma$, $e_{\dom(\sigma)}\in M_{\xi_\sigma}$
 and 
   $\{ a^\alpha_{\sigma\restriction \beta} : \beta\in\dom(\sigma)\}\in
 M_{\xi_\sigma}$, 
\item if $\sigma^\frown 0\in \Gamma_\alpha$, then
       $\{ n : c_\sigma(n) \subset a^\alpha_{\sigma^\frown 0}\}$ splits every
infinite $b\subset\omega$ in $M_{\xi_\sigma}$ where,
for  each  $n\in\omega$,  $c_\sigma(n)$ denotes
the set $  a^\alpha_{\sigma\restriction e_{\dom(\sigma)}(n)}\setminus
     \bigcup\{  a^\alpha_{\sigma\restriction e_{\dom(\sigma)}(m)} :
     m<n\}$.
\end{enumerate}
The inductive construction is routine and can be omitted. We note
that properties (2) and (3) ensure that each of
$\{ a^\alpha_{\sigma\restriction \beta} : \beta \in \dom(\sigma)\}\cup
\{ a^\alpha_{\sigma^\frown 0}\}$ 
and $\{ a^\alpha_{\sigma\restriction \beta} : \beta \in \dom(\sigma)\}\cup
\{ a^\alpha_{\sigma^\frown 1}\}$  are proper coherent sequences. We
finish by proving that $X(\Gamma_{\omega_1})$ has no converging
sequences. Let $\{ x_n : n\in \omega\}$ be an infinite subset of
$2^{\omega_1}$ and we show that the sequence does not converge to
$x\in 2^{\omega_1}$. Let $\varphi_x$ be the mapping as in Lemma
\ref{sixteen}, and let, for $n\in\omega$, $y_n = \varphi_x(x_n)$.
Following Definition \ref{fifteen}, let
 $a^x_\alpha = a_{x\restriction\alpha{+}1}$ for all $\alpha\in
\omega_1$. 
It suffices to find a  $\beta  < \omega_1$ so that
$\{ n : y_n\in \widehat{a^x_{\beta}}\}$ is infinite. If
$\{ y_n : n\in\omega\}$ is finite, then this is immediate, so assume
that it is infinite. For each $k\in \omega$, let $\beta_k{+}1 =
e_{\dom(\sigma)}(k)$, and
for each $n\in\omega$, choose the minimal
$k_n\in\omega$ so that $y_n\in \widehat{a^x_{\beta_{k_n}}}$.

Now let $L$ be the infinite set
$ \{ x\restriction y_n{+}1 : n\in \omega\}$ and choose $\sigma
\in \{ x\restriction \beta : \beta<\omega_1\}$ large enough so
that $\{y_n :n\in\omega\}\subset\dom(\sigma)$ and each
of $L$ and $\{ k_n : n\in\omega\}$
are elements of $ M_{\xi_\sigma}$.
Let $\dom(\sigma) = \alpha$
and let
$\{ c_\sigma(k): k\in\omega\}$ be as in condition (3).
Then there is an infinite set $b_\sigma$ chosen
so that $a^{\alpha{+}1}_{\sigma^\frown 0} =
 \bigcup \{ c_\sigma(k) : k \in b_\sigma\}$. 
Since $a^{\alpha{+}1}_{\sigma^\frown 1} = \omega\setminus
a^{\alpha{+}1}_{\sigma^\frown 0}$, we also have that
$a^{\alpha{+}1}_{\sigma^\frown 1} =
\bigcup \{ c_\sigma(k) : k \in \omega\setminus b_\sigma\}$.
Since $a^x_\alpha$ is one of
$a^{\alpha{+}1}_{\sigma^\frown 0},
a^{\alpha{+}1}_{\sigma^\frown 1}$, we have that
$b^x_\alpha = \{ k\in \omega : c_\sigma(k) \subset a^x_\alpha\}$
splits
$\{ k_n : n\in \omega\}$. We finish by checking that
$y_n \in \widehat{a^{\alpha{+}1}_{\alpha}}$ for the infinitely many
 $n$ such that $k_n\in b^x_\alpha$. Fix any $n$ with $k_n\in
b^x_\alpha$ and recall that 
 $y_n \in \widehat{a^\alpha_{\beta_{k_n}}}
\setminus \bigcup_{m<k_n}\widehat{a^\alpha_{\beta_m}}$.
Since $c_\sigma(k_n) = 
{a^\alpha_{\beta_{k_n}}}
\setminus \bigcup_{m<n}{a^\alpha_{\beta_m}}$, it follows
that $y_n\in \widehat{a^x_\alpha}$ since
$
\widehat{a^x_\alpha}$ contains
$\widehat{a^\alpha_{\beta_{k_n}}}
\setminus \bigcup_{m<k_n}\widehat{a^\alpha_{\beta_m}}$.
\end{proof}

\begin{bibdiv}

\def\cprime{$'$} 

\begin{biblist}

\bib{Blass1}{article}{
   author={Blass, Andreas},
   title={Selective ultrafilters and homogeneity},
   journal={Ann. Pure Appl. Logic},
   volume={38},
   date={1988},
   number={3},
   pages={215--255},
   issn={0168-0072},
   review={\MR{942525}},
   doi={10.1016/0168-0072(88)90027-9},
}

\bib{BrendleShelah}{article}{
   author={Brendle, J\"{o}rg},
   author={Shelah, Saharon},
   title={Ultrafilters on $\omega$---their ideals and their cardinal
   characteristics},
   journal={Trans. Amer. Math. Soc.},
   volume={351},
   date={1999},
   number={7},
   pages={2643--2674},
   issn={0002-9947},
   review={\MR{1686797}},
   doi={10.1090/S0002-9947-99-02257-6},
}
	
\bib{withWill}{article}{
author={W.~Brian and A. Dow},
title={Small cardinals and small Efimov spaces
},
note = {submitted},
date={2019},
}

\bib{SplitCharacter}{article}{
   author={Dow, Alan},
   title={Efimov spaces and the splitting number},
   note={Spring Topology and Dynamical Systems Conference},
   journal={Topology Proc.},
   volume={29},
   date={2005},
   number={1},
   pages={105--113},
   issn={0146-4124},
   review={\MR{2182920}},
}
	
	\bib{Cclosed}{article}{
   author={Dow, A.},
   title={Compact C-closed spaces need not be sequential},
   journal={Acta Math. Hungar.},
   volume={153},
   date={2017},
   number={1},
   pages={1--15},
   issn={0236-5294},
   review={\MR{3713559}},
   doi={10.1007/s10474-017-0739-x},
}

\bib{Roberto}{article}{
   author={Dow, Alan},
   author={Pichardo-Mendoza, Roberto},
   title={Efimov's problem and Boolean algebras},
   journal={Topology Appl.},
   volume={160},
   date={2013},
   number={17},
   pages={2207--2231},
   issn={0166-8641},
   review={\MR{3116583}},
   doi={10.1016/j.topol.2013.09.006},
}

\bib{ShelahEfimov}{article}{
   author={Dow, Alan},
   author={Shelah, Saharon},
   title={An Efimov space from Martin's axiom},
   journal={Houston J. Math.},
   volume={39},
   date={2013},
   number={4},
   pages={1423--1435},
   issn={0362-1588},
   review={\MR{3164725}},
 }

 \bib{ShelahSplitting}{article}{
   author={Dow, Alan},
   author={Shelah, Saharon},
   title={On the cofinality of the splitting number},
   journal={Indag. Math. (N.S.)},
   volume={29},
   date={2018},
   number={1},
   pages={382--395},
   issn={0019-3577},
   review={\MR{3739621}},
   doi={10.1016/j.indag.2017.01.010},
}

\bib{HrusakIsrael}{article}{
   author={Gaspar-Arreola, Miguel \'{A}ngel}, 
   author={Hern\'{a}ndez-Hern\'{a}ndez, Fernando},
   author={Hru\v{s}\'{a}k, Michael},
   title={Scattered spaces from weak diamonds},
   journal={Israel J. Math.},
   volume={225},
   date={2018},
   number={1},
   pages={427--449},
   issn={0021-2172},
   review={\MR{3805653}},
   doi={10.1007/s11856-018-1669-1},
}

\bib{JudahShelah}{article}{
   author={Ihoda, Jaime I.},
   author={Shelah, Saharon},
   title={$\Delta^1_2$-sets of reals},
   journal={Ann. Pure Appl. Logic},
   volume={42},
   date={1989},
   number={3},
   pages={207--223},
   issn={0168-0072},
   review={\MR{998607}},
   doi={10.1016/0168-0072(89)90016-X},
}

\bib{Koppelberg}{article}{
   author={Koppelberg, Sabine},
   title={Boolean algebras as unions of chains of subalgebras},
   journal={Algebra Universalis},
   volume={7},
   date={1977},
   number={2},
   pages={195--203},
   issn={0002-5240},
   review={\MR{0434914}},
   doi={10.1007/BF02485429},
}

\bib{Koppelberg2}{article}{
   author={Koppelberg, Sabine},
   title={Minimally generated Boolean algebras},
   journal={Order},
   volume={5},
   date={1989},
   number={4},
   pages={393--406},
   issn={0167-8094},
   review={\MR{1010388}},
   doi={10.1007/BF00353658},
}

\bib  {Kosz1}{article}{
    AUTHOR = {Koszmider, Piotr},
     TITLE = {Forcing minimal extensions of {B}oolean algebras},
   JOURNAL = {Trans. Amer. Math. Soc.},
  FJOURNAL = {Transactions of the American Mathematical Society},
    VOLUME = {351},
      YEAR = {1999},
    NUMBER = {8},
     PAGES = {3073--3117},
      ISSN = {0002-9947},
     CODEN = {TAMTAM},
   MRCLASS = {03E35 (03E50 06E15 54A25 54A35 54G99)},
  MRNUMBER = {1467471 (99m:03099)},
MRREVIEWER = {P{\'e}ter Komj{\'a}th},
       DOI = {10.1090/S0002-9947-99-02145-5},
       URL = {http://dx.doi.org/10.1090/S0002-9947-99-02145-5},
}

\bib{Kunen}{book}{
   author={Kunen, Kenneth},
   title={Set theory},
   series={Studies in Logic and the Foundations of Mathematics},
   volume={102},
   note={An introduction to independence proofs},
   publisher={North-Holland Publishing Co., Amsterdam-New York},
   date={1980},
   pages={xvi+313},
   isbn={0-444-85401-0},
   review={\MR{597342}},
}

\bib{KunenTall}{article}{
   author={Kunen, Kenneth},
   author={Tall, Franklin D.},
   title={Between Martin's axiom and Souslin's hypothesis},
   journal={Fund. Math.},
   volume={102},
   date={1979},
   number={3},
   pages={173--181},
   issn={0016-2736},
   review={\MR{532951}},
   doi={10.4064/fm-102-3-173-181},
}

\bib{Laver}{article}{
   author={Laver, Richard},
   title={On the consistency of Borel's conjecture},
   journal={Acta Math.},
   volume={137},
   date={1976},
   number={3-4},
   pages={151--169},
   issn={0001-5962},
   review={\MR{0422027}},
}

\bib{VaughanSS}{collection}{
   author={Vaughan, Jerry},
   title={Open problems in topology. II},
   editor={Pearl, Elliott},
   publisher={Elsevier B. V., Amsterdam},
   date={2007},
   pages={249--256},
   isbn={978-0-444-52208-5},
   isbn={0-444-52208-5},
   review={\MR{2367385}},
}

\end{biblist}
\end{bibdiv}

\end{document}